\def\C{{\mathbb C}}
\def\R{{\mathbb R}}
\def\d{{\mathrm d}}
\def\d{{\rm d}}
\newcommand\calA{{\mathcal A}}
\renewcommand\d{{\rm d}}
\newcommand\bfx{{\mathbf x}}
\newcommand\bfA{{\mathbf A}}
\newcommand\bfK{{\mathbf K}}
\newcommand\bfM{{\mathbf M}}
\newcommand\quadfor{\quad\hbox{ for }\quad}
\renewcommand\ss{s}
\newcommand{\diff}{\frac{\d}{\d t}}
\newcommand{\Ga}{\varGamma}
\newcommand{\Gat}{\Ga(t)}
\newcommand{\GT}{\mathcal{G}_T}
\newcommand{\laplace}{\Delta}
\newcommand{\nbg}{\nabla_{\Ga}}
\newcommand{\mat}{\partial^{\bullet}}
\newcommand{\nb}{\nabla}
\newcommand{\pa}{\partial}
\def \t {(t)}
\def \to {\rightarrow}
\newcommand{\vphi}{\varphi}
\def \nu {\textnormal{n}}
\newcommand\T{\mathtt{\bf T}}
\newcommand\surface{\Ga}
\newcommand\enodes\xs
\newcommand\nnodes\bfx
\newcommand\rnodes\xs
\newcommand\regmass\bfK
\newcommand\mass\bfM
\newcommand\stiff\bfA
\newcommand{\xs}{\bfx^\ast}
\renewcommand{\dim}{m}
\newcommand{\bbk}{\color{black}}
\newcommand{\ebk}{\color{black}}
\begin{document}

\title{Maximal regularity of backward difference time discretization for evolving surface PDEs and its application to nonlinear problems}
\shorttitle{Time discrete maximal regularity for evolving surface PDEs}

\author{%
{\sc
Bal\'{a}zs Kov\'{a}cs\thanks{Corresponding author. Email: balazs.kovacs@mathematik.uni-regensburg.de}} \\[2pt]
Faculty of Mathematics, University of Regensburg, \\
Universit\"atsstra\ss{}e 31, 93049 Regensburg, Germany\\[6pt]
{\sc and}\\[6pt]
{\sc Buyang Li}\thanks{Email: buyang.li@polyu.edu.hk}\\[2pt]
Department of Applied Mathematics, \\
The Hong Kong Polytechnic University, Hong Kong
}
\shortauthorlist{B.~Kov\'acs and B.~Li}

\maketitle

\begin{abstract}
{Maximal parabolic $L^p$-regularity of linear parabolic equations on an evolving surface is shown by pulling back the problem to the initial surface and studying the maximal $L^p$-regularity on a fixed surface. By freezing the coefficients in the parabolic equations at a fixed time and utilizing a perturbation argument around the freezed time, it is shown that backward difference time discretizations of linear parabolic equations on an evolving surface along characteristic trajectories can preserve maximal $L^p$-regularity in the discrete setting. 
The result is applied to prove the stability and convergence of time discretizations of nonlinear parabolic equations on an evolving surface, with linearly implicit backward differentiation formulae characteristic trajectories of the surface, for general locally Lipschitz nonlinearities. 
The discrete maximal $L^p$-regularity is used to prove the boundedness and stability of numerical solutions in the $L^\infty(0,T;W^{1,\infty})$ norm, which is used to bound the nonlinear terms in the stability analysis. 
Optimal-order error estimates of time discretizations in the $L^\infty(0,T;W^{1,\infty})$ norm is obtained by combining the stability analysis with the consistency estimates.}
{evolving surface, nonlinear parabolic equations, locally Lipschitz continuous, backward differentiation formulae, linearly implicit, maximal $L^p$-regularity, stability, convergence, maximum norm.}
\end{abstract}

\section{Introduction}
\label{sec:introduction}

This paper concerns discrete maximal $L^p$-regularity for parabolic partial differential equations (PDEs) on an evolving surface and its application to the analysis of evolving surface nonlinear parabolic equations of the form
\begin{equation}
\label{eq:PDE}
\left\{
\begin{alignedat}{3}
\mat u + u \, (\nb_{\Gat} \cdot v) - \laplace_{\Gat} u &= f(u,\nb_{\Ga\t} u) & \qquad & \textrm{ on } \Ga\t \,\,\,\mbox{for}\,\,\, t\in(0,T] , \\
u(\cdot,0) &= u^0 & \qquad & \textrm{ on } \Ga(0)=\Ga^0 ,
\end{alignedat}
\right.
\end{equation}
where $\Ga(t) \subset\R^{m+1}$ is an $m$-dimensional evolving closed hypersurface with given velocity $v$, \bbk $\nbg$ and $\laplace_{\Gat}$ denote the surface gradient and Laplacian, respectively, and $\mat$ denotes the material derivative, see Section~\ref{sec:the evolving surface - basics} for more details. \ebk The function $f$ is a given smooth nonlinear function of $u$ and $\nbg u$ but not necessarily globally Lipschitz continuous. 

\bbk We recall, that an abstract evolution equation $\dot u - A u = f$ on the Banach space $X$ is said to have the \emph{maximal $L^p$-regularity} if for any $f \in L^p([0,T],X)$ the following estimate holds:
\begin{equation*}
	\|\dot u\|_{L^p([0,T],X)} + \|A u\|_{L^p([0,T],X)} \leq C \|f\|_{L^p([0,T],X)} .
\end{equation*}
We refer to \cite{Amann,KunstmannWeis2004,LSU68,Lions,Lunardi} for the maximal regularity of parabolic partial differential equations.
\ebk 

Partial differential equations on evolving surfaces have received much attention in recent years due to their applications in physics and biology, see, e.g.~\cite{Barreira2011,CGG,DeckelnickElliott2001_existence,Elliott2010a,ElliottRanner_CahnHilliard,AlphonseElliottStinner_abstract,AlphonseElliottStinner_linear} 
and the survey articles \cite{DeckelnickDziukElliott_acta,DziukElliott_acta,BGN_survey}, which collect many applications and numerical results. {\color{black}
Energy-diminishing and structure-preserving methods for evolving surfaces driven by curvature flows have been developed in many articles; see \cite{Bao-Zhao-2021,BGN-2007-JCP,BGN-2008-JCP,Duan-Li-Zhang-2021,Dziuk-1991,Jiang-Li-2021}. 
}

The numerical analysis of \emph{nonlinear} evolving surface PDEs was considered in many articles (for papers on linear problems we refer to the references of the above surveys).  
Error estimates of semi-discrete finite element methods for the Cahn--Hilliard equation with nonlinearity $f(u)=u-u^3$ on an evolving surface were obtained by Elliott and Ranner \cite{ElliottRanner_CahnHilliard}. 
Error bounds of semi-discrete finite element methods for Cahn--Hilliard equations with a general locally Lipschitz continuous nonlinearity $f(u)$ were studied in \cite{CHsurf}. 
The discrete maximum principle was established for semi-linear evolving surface PDE systems with nonlinearity $f(u_1,\dotsc,u_k)$ in \cite{dmp_evolving_surf}, \bbk wherein \ebk the discrete maximum principle was applied to prove error bounds for full discretization with backward Euler scheme. 
Error estimates of full discretization with backward differentiation formulae (BDF methods) for quasi-linear and semi-linear evolving surface PDEs with a general nonlinear $f(u)$ were proved in \cite{KovacsPower_quasilinear}. 

In all of these articles the numerical solutions were proved bounded in $L^\infty$ in order to use the local Lipschitz continuity of the solution to obtain stability and convergence. However, the techniques 
cannot be applied to nonlinearities of the form $f(u,\nb_{\Ga\t}  u)$, which often appear in solution driven evolving surface PDEs and curvature flows; see \cite{soldriven,MCF,MCF_soldriven,Willmore,MCFgeneralised}. 
In this case, the error analysis typically requires proving the $W^{1,\infty}$-boundedness of numerical solutions in order to bound the nonlinear terms in the stability and convergence estimates. 

For linear parabolic equations on an evolving surface, a $W^{1,\infty}$-norm error estimate of semi-discrete finite element solutions was shown in \cite{KovacsPower_max}. For full discretization of parabolic equations on stationary surfaces by the backward Euler time-stepping scheme, $ L^\infty$-norm error estimates were shown in \cite{Kroener}. The approach of both papers depends on the linear structure of the equations and cannot be extended to nonlinear problems. 

For nonlinear evolving surface PDEs with nonlinearity of the form $f(u,\nb_{\Ga\t}  u)$ (e.g.~mean curvature flow, Willmore flow, etc. \cite{soldriven,MCF,MCF_soldriven,Willmore,MCFgeneralised}), the numerical methods are often based on discretizations along the flow, and the $W^{1,\infty}$-norm bounds of the error and numerical solution in the literature are obtained from optimal-order $H^1$-norm error bounds and the \emph{inverse estimates} for finite element functions. As a result, at least quadratic surface finite elements are required to be used, and a certain grid-ratio condition $\tau=O(h^\kappa)$ is also needed in the error analysis. 

\medskip
The objective of this paper is to establish the maximal $L^p$-regularity \bbk of non-linear evolving surface PDEs, and to show  maximal $L^p$-regularity \ebk and $W^{1,\infty}$-norm estimates of temporally semi-discrete BDF methods for evolving surface problems. We then use the established results to prove error estimates in the $W^{1,\infty}$-norm for the temporal semi-discretization of evolving surface PDEs with nonlinearity of the form $f(u,\nb_{\Ga\t}  u)$. 

In flat domains, discrete maximal $L^p$-regularity has been used for analysis of time discretizations \cite{AkrivisLiLubich2017,KunstmannLiLubich2016,AkrivisLi2017} and full discretization \cite{Cai-Li-Lin-Sun-2019} of nonlinear parabolic equations. 
The $W^{1,\infty}$-norm error estimates established in this paper are complementary to the analysis of fully discrete evolving surface FEMs in \cite{soldriven,MCF}, which cannot allow $h\rightarrow 0$ for a fixed stepsize $\tau$. 
Through analyzing the temporal semi-discretization and full discretization separately as in \cite{LiSun2013}, the $W^{1,\infty}$-norm error estimates of temporal semi-discretization in this paper would open the door towards the analysis of fully discrete evolving surface finite element methods with linear surface finite elements and without grid-ratio conditions. 

We will start by showing maximal $L^p$-regularity for \emph{linear} evolving surface PDEs on a stationary surface, and then extend this result to evolving surfaces using the pull-back map, a perturbation argument in time, and relying on classical PDE theory. \bbk A general abstract formulation based on the pull-back map was first used for evolving surface problems in \cite{AlphonseElliottStinner_abstract,AlphonseElliottStinner_linear} to show well-posedness and regularity results (maximal regularity estimates were not shown therein). \ebk 
Using the results of \cite{KovacsLiLubich2016} we will show that the maximal $L^p$-regularity property is preserved by BDF discretizations of linear problems on stationary surfaces and then extend the result to evolving surfaces by a perturbation argument in time.
We then use the obtained discrete maximal regularity results to prove error estimates for BDF time discretization of nonlinear parabolic problems on evolving surfaces. 
The nonlinearity $f :\R \times \R^{m+1} \rightarrow \R$ is assumed to be smooth but may not have global Lipschitz continuity. 


The paper is organised as follows:
In Section~\ref{sec:parab-probl-evolv}, we introduce the basic notation and function spaces on evolving surfaces, and the definition of weak solutions. 
In Section~\ref{sec:transl-init-surf}, we prove the maximal parabolic $L^p$-regularity of linear parabolic equations on evolving surfaces by pulling back the equations to the initial surface $\Ga^0$, then using a perturbation argument we extend this result to evolving surface problems. 
In Section~\ref{section:discrete maxreg}, we prove discrete maximal $L^p$-regularity of BDF time discretizations for linear problems. We first establish this result for parabolic equations on a stationary surfaces, and then extend the result to evolving surfaces by a perturbation argument in time, \bbk via a similar argument as \ebk in the time continuous case. 
In Section~\ref{Sec:TD-nonlinear}, we prove stability and error bound in the maximum norm for linearly implicit BDF methods for nonlinear parabolic problems on evolving surfaces.

\section{Notation}
\label{sec:parab-probl-evolv}

\subsection{The evolving surface}
\label{sec:the evolving surface - basics}
Let $m\ge 1$ be a fixed integer. 
We assume that the evolution of a hypersurface $\Ga\t \subset \R^{m+1}$ is given by a diffeomorphic flow map $X(\cdot,t):\Ga^0 \to \Ga(t)$, where $\Ga^0$ is a {\color{black}smooth} $m$-dimensional initial hypersurface and $X(\cdot,0)$ equals the identity map. We assume that $X(y,t)$ is smooth with respect to $(y,t)\in \Ga^0\times[0,T]$ and the inverse function $X^{-1}(x,t)$ is smooth with respect to $x\in\Ga(t)$ uniformly for $t\in[0,T]$. 


The {\color{black}material velocity (which is simply called velocity below)} and material derivative on the surface are respectively given, for $x=X(y,t) \in \Gat$ with $y\in\Ga^0$, by 
\begin{align}
\label{eq:ODE for positions}
v(x,t) =&\ \partial_t X(y,t) , \\
\intertext{and} 
\nonumber
\mat u(x,t)= &\ \frac{\d}{\d t}u(X(y ,t),t) .
\end{align} 
Let $\nu$ be the unit outward normal vector to the surface $\Ga(t)$. We denote by $\nabla_{\Ga(t)}u$ the tangential gradient of the function $u$, and denote by $\Delta_{\Ga(t)}u = \nabla_{\Ga(t)} \cdot \nabla_{\Ga(t)} u$ the Laplace--Beltrami operator acting on $u$. 

Since the Ricci curvature of the surface $\Ga(t)$ depends only on the second-order partial derivatives of the flow map, it follows that the Ricci curvatures of the surfaces $\Ga(t)$, $t\in[0,T]$, are uniformly bounded (possibly negative). 
For more details on all these basic concepts we refer to \cite{DziukElliott_ESFEM,DeckelnickDziukElliott_acta,DziukElliott_acta}, and the references therein. 
An unified abstract theory for evolving problems is found in \cite{ElliottRanner_unified}.

\subsection{Function spaces}

We briefly introduce some Sobolev and Bochner-type function spaces to be used in this paper. More details can be found in \cite{AlphonseElliottStinner_abstract,AlphonseElliottStinner_linear}.

On a given surface $\Ga(t)$ the conventional Sobolev space $W^{1,p}(\Ga(t))$, $1\leq p\leq \infty$, can be defined as 
\begin{equation*}
W^{1,p}(\Ga(t)) = \bigl\{ w \in L^{p}(\Ga(t)) \mid \nbg w \in L^{p}(\Ga(t))^{\dim+1} \bigr\} ,
\end{equation*}
and analogously $W^{k,p}(\Ga(t))$ can be defined for any integer $k\ge 0$; 
see \cite{demlow2009,DziukElliott_ESFEM}. 
For $k<0$ and $1<p<\infty$, 
$W^{k,p}(\Ga(t))$ denotes the dual space of 
$W^{-k,p'}(\Ga(t))$, with $1/p+1/p'=1$.

On the space-time manifold $\GT = \cup_{t\in(0,T)} (\Ga\t\times\{t\})$ the inhomogeneous Sobolev spaces, collecting time-dependent functions mapping into time-dependent spaces (note the subscript $t$), are defined by  
\begin{align}
L^p_t(0,T;W^{k,q}(\Ga(t))) = &\ \Big\{ w : \GT\rightarrow \R \mid w(\cdot,t)\in W^{k,q}(\Ga(t))\,\,\mbox{a.e.}\,\,t\in(0,T), \notag \\
&\,\,\,\, \phantom{ \big\{ w : \GT \to \R \mid } \;\, t \mapsto \|w(\cdot,t)\|_{W^{k,q}(\Ga(t))} \in L^p(0,T) \Big\} , \label{def-LpWkq} \\
W^{1,p}_t(0,T;W^{k,q}(\Ga(t)))  = &\ \Big\{ w \in L^p_t(0,T;W^{k,q}(\Ga(t)))  \mid 
\mat w \in L^p_t(0,T;W^{k,q}(\Ga(t)))  \Big\} , \label{def-W1pWkq}
\end{align}
with the standard notational convention $H^{k}(\Ga(t)) =
W^{k,2}(\Ga(t))$.  
Since the flow map $X(\cdot,t):\Ga^0 \to \Ga(t)$ is a diffeomorphism, it follows that, for $s=0,1$, $w \in W^{s,p}_t(0,T;W^{k,q}(\Ga(t))) $ if and only if $w(X(y,t),t) \in W^{s,p}(0,T;W^{k,q}(\Ga^0))$ as a function of $(y,t)\in\Ga^0\times(0,T)$.

\bbk
\begin{remark}
The function spaces defined in \eqref{def-LpWkq} and \eqref{def-W1pWkq} are the same as those in \cite{AlphonseElliottStinner_abstract,AlphonseElliottStinner_linear} or \cite{ACDE2021} 
(denoted therein by $L^p_{W^{k,q}}$ and $W^{1,p}_{W^{k,q}}$),  but employing a different notation here.  
We adopt the notation in \eqref{def-LpWkq}--\eqref{def-W1pWkq} in order to distinguish the following three different spaces: 
$$
	L^p_t(0,T;L^q(\Ga(t))) , \quad L^p(0,T;L^q(\Ga(s))), \quad \text{and} \quad L^p(0,T;L^q(\Ga^0)) , 
$$
which are all used in this article. 
In our notation, the subscript $t$ in $L^p_t(0,T;L^q(\Ga(t)))$ means that the $L^p$ norm is integrated against the evolving surface $\Ga\t$ which depends on $t$, and therefore 
$$L^p_t(0,T;L^q(\Ga(t)))=L^p_s(0,T;L^q(\Ga(s)))$$ is independent of symbol in the subscript, while the second space $L^p(0,T;L^q(\Ga(s)))$ does depend on the stationary surface $\Ga(s)$ and therefore depends on $s$.  
\end{remark}
\ebk

Analogously, we define $C^s_t([0,T];W^{k,q}(\Ga(t)))$ to be the space of functions $w :\GT\rightarrow \R$ such that $w(X(y,t),t)\in C^s([0,T];W^{k,q}(\Ga^0))$ as a function of $y$ and $t$. We will also use the following abbreviation:
$$
H^1(\GT)=L^{2}_t(0,T;H^1(\Ga(t))) \cap H^1_t(0,T;L^{2}(\Ga(t))) .
$$


\subsection{Weak formulation}


The weak formulation of \eqref{eq:PDE} reads as follows: A function $u  \in H^1(\GT)\cap L^\infty_t(0,T;W^{1,\infty}(\Ga(t)))$ is called a \emph{weak solution} of \eqref{eq:PDE} if the following equation holds for all $\varphi\in H^1(\GT)$ such that $\mat\varphi=0$: 
\begin{equation}
\label{eq:PDE weak form}
\diff \int_{\Gat}\!\! u \vphi + \int_{\Gat}\!\! \nb_{\Gat} u
\cdot \nb_{\Gat} \vphi =
\int_{\Gat}\!\! f(u,\nb_{\Gat} u) \, \vphi ,
\qquad\mbox{for almost every $t\in[0,T]$}.
\end{equation}  
In the case of linear problem the regularity condition $u\in L^\infty_t(0,T;W^{1,\infty}(\Ga(t)))$ can be relaxed as in \cite[Definition~4.1]{DziukElliott_ESFEM}. 

\subsection{Linearly implicit BDF methods for the nonlinear problem}

For a time stepsize $\tau > 0$ and for $t_n = n\tau$, $n=0,1,\dots,N$ with $N \leq T/\tau$, 
we consider the temporal semi-discretization of the weak formulation \eqref{eq:PDE weak form} by a linearly implicit $k$-step BDF, with $1\le k \leq 6$: Find an approximation $u^n$ to $u(\cdot,t_n)$ determined by  
\begin{equation}
\label{eq:linearly implicit BDF}
\begin{aligned}
\frac 1\tau \sum_{j=0}^k\delta_j \int_{\Ga(t_{n-j})}u^{n-j} \vphi^{n-j}  + 
\int_{\Ga(t_{n})} \nb_{\Ga(t_n)}u^{n}  \cdot  \nb_{\Ga(t_n)} \vphi^{n} = 
\int_{\Ga(t_{n})} f(\hat u^n,\nb_{\Ga(t_n)} \hat u^n) \vphi^{n} , \qquad n \geq k ,
\end{aligned}
\end{equation}
where $\delta_j$, $j=0,\dots,k$, are the coefficients of the $k$-step BDF method, $\vphi^j := \vphi(\cdot,t_j)\in H^1(\Ga(t_j))$ satisfying $\vphi^j\circ X(\cdot,t_j)=\vphi^{0}$ for $j=1,\dots,N$, and $\hat u^n$ are the extrapolated values given by 
\begin{equation}
\label{eq:extrapolation - hat u}
\hat u^n = \sum^{k-1}_{j=0}\gamma_j \mathcal{F}_X(t_n,t_{n-j-1}) u^{n-j-1} , \quad n \geq k ,
\end{equation}
with $\gamma_j$, $j=0,\dots,k-1$, being the coefficients of the polynomial 
$\gamma (\zeta) = \frac 1\zeta\big( 1- (1-\zeta)^k \big) = \sum_{i=0}^{k-1} \gamma_i\zeta^i $,  
and $\mathcal{F}_X(t,s)$ denoting the flow map from $\Ga(s)$ to $\Ga(t)$. In other words, for any function $w$ defined on $\Ga(s)$, $\mathcal{F}_X(t,s)w$ is a function on $\Ga(t)$, defined by
\begin{align}
(\mathcal{F}_X(t,s) w)(X(\cdot,t)) := w(X(\cdot,s)), \qquad \text{on} \ \Ga^0 .
\end{align}
For given $u^{n-j}$, $j=1,\dots,k$, the extrapolated values in $f(\hat u^n,\nabla_{\Ga(t_n)} \hat u^n)$ are known and therefore $u^n\in H^1(\Ga(t_n))$ can be determined uniquely by the linearly equation \eqref{eq:linearly implicit BDF}.

The coefficients $\delta_j$, $j=0,\dots,k$ of the $k$-step BDF method is determined by the generating polynomial
\begin{equation}
\label{eq:BDF generating functions}
\begin{aligned}
\sum_{\ell=1}^k \frac 1\ell  (1-\zeta)^\ell = \sum\limits^k_{j=0}\delta_j \zeta^j .
\end{aligned}
\end{equation}
It is known that the $k$-step BDF method has order $k$ and is A$(\alpha_k)$-stable for $1 \leq k \leq 6$, with $\alpha_1=\alpha_2=0.5\,\pi, \alpha_3=0.4779\,\pi, \alpha_4=0.4075\,\pi,
\alpha_5=0.2880\,\pi$ and $\alpha_6=0.0991\,\pi$, and unstable for $k>6$, see \cite[Section V.2]{HairerWannerII}. 
The A$(\alpha)$-stability is equivalent to $|\arg \delta (\zeta)|\leq \pi -\alpha$ 
for $|\zeta|\leq 1.$ 

In this paper we shall prove the following result by using the temporally semi-discrete maximal $L^p$-regularity of evolving surface PDEs. 

\begin{theorem}
	\label{theorem: convergence of BDF time discr}
	Let $p$ and $q$ be positive numbers satisfying $2/p+m/q<1$, \bbk and let $\Ga\t \subset \R^{m+1}$ be sufficiently smooth. \ebk 
	If the solution of the nonlinear evolving surface problem \eqref{eq:PDE} is sufficiently smooth, i.e. 
	$$u\in C_t([0,T];W^{2,q}(\Ga(t))) \, \cap \, C^{k+1}_t([0,T];L^q(\Ga(t))) ,$$ 
	and the error $e^n=u^n-u(\cdot,t_n)$ in the starting values is sufficiently small, satisfying 
	\begin{equation}\label{initial-error-BDF}
	\max_{0\le n \le k-1}\big(\tau^{-1}\|e^n\|_{L^q(\Ga(t_n))}+ \|e^n\|_{W^{2,q}(\Ga(t_n))}\big)
	\leq C_0 \tau^k \quad \mbox{for some constant $C_0 > 0$},
	\end{equation}
	then there exists a constant $\tau_0>0$ such that for $\tau<\tau_0$ the temporally semi-discrete solution given by \eqref{eq:linearly implicit BDF} satisfies the following estimates for $N \leq T/\tau$: 
	\begin{align}
	\label{Error-Time-discr-Lp}
	\bigg( \tau \sum_{n=1}^N \bigg\|\frac{e^{n}-\mathcal{F}_X(t_n,t_{n-1}) e^{n-1}}{\tau}\bigg\|_{L^q(\Ga(t_n))}^p 
	+ \tau \sum_{n=1}^N \|e^{n}\|_{W^{2,q}(\Ga(t_n))}^p  \bigg)^{\frac1p} \leq &\ C\tau^k ,\\
	\label{Error-Time-discr} 
	\max_{1\le n\le N} \|e^n\|_{W^{1,\infty}(\Ga(t_n))} 
	\leq &\ C\tau^k .
	\end{align}
	The constants $\tau_0$ and $C>0$ are independent of $\tau$ and $N$ {\rm(}but may depend on $C_0$, $T$ and $u${\rm)}. 
\end{theorem}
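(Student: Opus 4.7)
The plan is to recast the error $e^n := u^n - u(\cdot, t_n)$ as the solution of a linearly implicit BDF discretization of a \emph{linear} evolving-surface PDE whose right-hand side contains the BDF consistency defect of the exact solution together with the nonlinear remainder $R^n := f(\hat u^n, \nb_{\Ga(t_n)} \hat u^n) - f(u(\cdot,t_n), \nb_{\Ga(t_n)} u(\cdot,t_n))$. The discrete maximal $L^p$-regularity of Section~\ref{section:discrete maxreg} would then be applied to this linear error equation to bound $e^n$ in the discrete $L^p(W^{2,q})$ norm \eqref{Error-Time-discr-Lp}; a discrete trace / Sobolev embedding turns this into the pointwise $W^{1,\infty}$ bound \eqref{Error-Time-discr}, and this is exactly where the hypothesis $2/p + m/q < 1$ enters.

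First I would subtract the exact weak formulation \eqref{eq:PDE weak form} at $t = t_n$ from the scheme \eqref{eq:linearly implicit BDF}, using test functions transported along the flow (so $\mat\vphi = 0$), to obtain
\begin{equation*}
\frac1\tau \sum_{j=0}^k \delta_j \int_{\Ga(t_{n-j})} e^{n-j} \vphi^{n-j}
+ \int_{\Ga(t_n)} \nb_{\Ga(t_n)} e^n \cdot \nb_{\Ga(t_n)} \vphi^n
= \int_{\Ga(t_n)} \bigl(R^n + D^n\bigr) \vphi^n,
\end{equation*}
where the defect $D^n$ measures how well the $k$-step BDF quotient approximates $\mat (u\vphi)|_{t_n}$ along the flow. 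The regularity $u \in C^{k+1}_t([0,T]; L^q(\Ga(t)))$ yields $\|D^n\|_{L^q(\Ga(t_n))} = O(\tau^k)$ via Taylor expansion of the pull-back $u \circ X(\cdot,t)$, and the extrapolation \eqref{eq:extrapolation - hat u} analogously satisfies $\hat u^n - u(\cdot,t_n) = O(\tau^k)$ in $W^{1,q}(\Ga(t_n))$ up to a linear combination of the transported prior errors $\mathcal{F}_X(t_n, t_{n-j-1}) e^{n-j-1}$.

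The heart of the argument is a bootstrap. Let $n^\star$ be the smallest index at which some prescribed $W^{1,\infty}$-bound is violated; by \eqref{initial-error-BDF} and the Sobolev embedding $W^{2,q}(\Ga^0) \hookrightarrow W^{1,\infty}(\Ga^0)$ (valid since $2/p + m/q < 1$ forces $q > m$), one has $n^\star \geq k$. For $n \leq n^\star-1$ the inductive $W^{1,\infty}$ control confines $\hat u^n$ to a fixed bounded set of $W^{1,\infty}(\Ga(t_n))$, so the local Lipschitz continuity of $f$ gives
\begin{equation*}
\|R^n\|_{L^q(\Ga(t_n))} \leq C \|\hat u^n - u(\cdot,t_n)\|_{W^{1,q}(\Ga(t_n))}
\leq C\tau^k + C \sum_{j=0}^{k-1} \|e^{n-j-1}\|_{W^{1,q}(\Ga(t_{n-j-1}))} .
\end{equation*}
Applying the discrete maximal $L^p$-regularity to the linear error equation, summing $p$-th powers, and absorbing the lower-order $W^{1,q}$ terms via a discrete Gronwall argument then yields \eqref{Error-Time-discr-Lp}. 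For \eqref{Error-Time-discr} I would invoke the discrete trace embedding of the maximal-regularity space $\ell^p_\tau(W^{2,q}) \cap W^{1,p}_\tau(L^q)$ along the flow into $\ell^\infty_\tau(W^{2-2/p, q})$, combined with the Sobolev embedding $W^{2-2/p, q}(\Ga(t)) \hookrightarrow W^{1,\infty}(\Ga(t))$, which holds precisely when $2/p + m/q < 1$. For $\tau < \tau_0$ small enough this gives $\|e^n\|_{W^{1,\infty}} \leq C\tau^k$ uniformly in $n \leq n^\star$, contradicting the choice of $n^\star$ and closing the induction.

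The main obstacle is the consistent handling of the evolving geometry inside the bootstrap: the errors $e^{n-j}$ and the extrapolated quantities $\hat u^n$ live on different surfaces $\Ga(t_{n-j})$, so the pull-back transport $\mathcal{F}_X$ must be invoked throughout, and its $W^{k,q}$-stability uniformly in $n$ and $\tau$ must be used carefully whenever differences are compared across time levels, both inside $R^n$ and inside the discrete maximal-regularity estimate. A secondary subtlety is that the discrete maximal regularity of Section~\ref{section:discrete maxreg} applies only to linear problems with coefficients frozen along characteristics, so the full nonlinear term must be deposited on the right-hand side and controlled \emph{solely} through the inductive $W^{1,\infty}$ bound; for this reason \eqref{Error-Time-discr-Lp} and \eqref{Error-Time-discr} must be proved simultaneously within a single induction rather than consecutively.
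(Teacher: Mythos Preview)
Your proposal is correct and follows essentially the same route as the paper: derive the error equation, set up a bootstrap on a uniform $W^{1,\infty}$-bound for the numerical solution, use local Lipschitz continuity of $f$ to control the nonlinear remainder, apply the discrete maximal $L^p$-regularity of Theorem~\ref{theorem:MaxLp - LagrangianBDF}, and close the induction via the discrete inhomogeneous Sobolev embedding (Lemma~\ref{lemma:discrete space-time Sobolev inequality}). The paper organises this slightly differently---it pulls everything back to $\Gamma^0$ from the outset and puts the extrapolation consistency error into the defect $D^n$ rather than into $R^n$---but these are cosmetic.

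The one place where your sketch is too loose to work as written is the phrase ``absorbing the lower-order $W^{1,q}$ terms via a discrete Gronwall argument''. A direct Gronwall on the $W^{1,q}$ contribution fails: after applying Theorem~\ref{theorem:MaxLp - LagrangianBDF} you obtain a bound of the form $A_N \le C\,\|(E^n)_{n=0}^{N-1}\|_{L^p(W^{1,q})} + \ldots \le C A_{N-1} + \ldots$, and the absence of a $\tau$-prefactor makes this recursion blow up over $N\sim T/\tau$ steps. The paper inserts the Sobolev interpolation inequality
\[
\|E^n\|_{W^{1,q}(\Gamma^0)} \le \varepsilon\,\|E^n\|_{W^{2,q}(\Gamma^0)} + C\varepsilon^{-1}\|E^n\|_{L^q(\Gamma^0)},
\]
absorbs the $\varepsilon$-term into the left-hand side, and only then runs a discrete Gronwall on the remaining $L^q$ term, where the telescoping identity $E^n=\tau\sum_{j\le n}(E^j-E^{j-1})/\tau$ supplies the missing factor of $\tau$ (cf.\ the treatment of \eqref{eq:maxreg bound - pre power p}). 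This interpolation step is the missing ingredient in your outline.
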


The proof of Theorem \ref{theorem: convergence of BDF time discr} is based on the temporally semi-discrete maximal $L^p$-regularity for evolving surface PDEs, which is used in to prove the $W^{1,\infty}$ boundedness of numerical solutions through error estimates in the discrete $L^p_t(0,T;L^q(\Ga(t)))$ norm and an inhomogeneous Sobolev embedding inequality; see Lemma \ref{lemma:discrete space-time Sobolev inequality}. 
The temporally semi-discrete maximal $L^p$-regularity result for evolving surface PDEs is established in Section \ref{section:discrete maxreg} based on the continuous version established in Section \ref{sec:transl-init-surf}.

\section{Maximal $L^p$-regularity of linear evolving surface PDEs}
\label{sec:transl-init-surf}

In this section, we present the maximal $L^p$-regularity for the linear evolving surface PDE problem 
\begin{equation}
\label{eq:ES-PDE-strong-form}
\left\{
\begin{alignedat}{3}
\mat u + u \nb_{\Gat} \cdot v - \laplace_{\Gat} u =&\ f & \qquad & \text{on}\,\,\, \Ga\t ,\\
u(\cdot,0) =&\ u^0 & \qquad & \textrm{on}\,\,\, \Ga(0)=\Ga^0 ,
\end{alignedat}
\right. 
\end{equation}
with a given function $f : \GT \to \R$ (independent of $u$). 

The weak formulation of the linear problem \eqref{eq:ES-PDE-strong-form} with $f \in L^\infty_t(0,T;L^\infty(\Ga(t)))$ reads: Find $u \in H^1(\GT) \hookrightarrow C_t([0,T];L^2(\Ga(t)))$ such that, for all $\vphi\in H^1(\GT)$ with $\mat\varphi=0$, 
\begin{equation}
\label{eq:ES-PDE-weak-form}
\diff \int_{\Gat} u \vphi + \int_{\Gat} \nb_{\Gat} u
\cdot \nb_{\Gat} \vphi = 
\int_{\Gat} f \vphi ,
\end{equation} 
holds for almost every $t\in[0,T]$, with the initial condition $u(\cdot,0)=u^0$ on $\Ga(0)=\Ga^0$. 

%
%

By pulling back \eqref{eq:ES-PDE-strong-form} onto the initial surface $\Ga^0$ (see Appendix), one can see that there exists a smooth function $a(y,t)$ and a smooth linear transform $B(y,t)$ on the tangent space $\T_y\Ga^0$ at $y\in\Ga^0$, such that $u\in H^1(\GT)$ is a solution of \eqref{eq:ES-PDE-weak-form} if and only if the function 
$$U(y,t):=u(X(y,t),t)$$
defines a solution $U\in H^1(\Ga^0\times(0,T))$ of the following weak problem, for all $\psi \in H^1(\Ga^0)$
\begin{equation}
\label{eq:ES-PDE-weak-form-2}
\begin{aligned}
&\frac{\d}{\d t} \int_{\Ga^0} a(y,t) U(y,t)\psi(y) 
+ \int_{\Ga^0} B(y,t)\nb_{\Ga^0}U(y,t)
\cdot \nb_{\Ga^0} \psi (y) = \int_{\Ga^0}a(y,t)F(y,t) \psi(y) ,
\end{aligned}
\end{equation}
holds for almost all $t\in[0,T]$, with $F(y,t):=f(X(y,t),t)$ and the initial condition $U(\cdot,0)=u^0$. 

Since the Riemannian metric on the evolving surface is positive definite, from the expressions \eqref{def-ayt} and \eqref{exp-Axit}--\eqref{eq:1} it follows that the functions $a(y,t)$ and $B(y,t)$ satisfy the following estimates: 
\begin{align}
\label{aXt} 
&C^{-1}\leq a(y,t)\leq C, &&
\forall\, y\in\Ga^0 , \,\,\forall\, t\in[0,T] , \\
\label{BXt}
&C^{-1}|\xi_y|^2\leq B(y,t)\xi_y\cdot\xi_y\leq C |\xi_y|^2 ,
&&\forall\, \xi_y\in \T_y\Ga^0,\,\,\forall\, y\in\Ga^0 ,\,\, \forall\, t\in[0,T] ,
\end{align}
where $C$ is some positive constant (depending only on the given flow map), \bbk see also \cite[Theorem~2.32]{AlphonseElliottStinner_abstract}, and \cite[Lemma~3.2]{Vierling2014}. \ebk 

Moreover, there exists a smooth and invertible linear operator $K(y,t): \T_y\Ga^0 \to \T_{X(y,t)}\Ga\t$ 
such that
\begin{align}\label{Def-K-operator}
(\nabla_{\Ga(t)}u)(X(y,t),t)=K(y,t)\nabla_{\Ga^0}U(y,t)
\quad\mbox{for}\,\,\, U(y,t)=u(X(y,t),t).
\end{align}
The expressions of $a(y,t), B(y,t)$ and $K(y,t)$ in a local chart can be found in Appendix. 
{\color{black}
We also refer to \cite{AlphonseElliottStinner_abstract,AlphonseElliottStinner_linear} for more details on abstract formulation and pull-back techniques.} 

Through integration by parts, we obtain that \eqref{eq:ES-PDE-weak-form-2} is the weak formulation of the parabolic PDE on the fixed initial surface $\Ga^0$:
\begin{align}
\label{PDE:Gamma}
\left\{
\begin{aligned}
\diff \Big(a(\cdot,t) U(\cdot,t) \Big) -\nb_{\Ga^0}\cdot\Big(B(\cdot,t)\nb_{\Ga^0} U(\cdot,t)\Big) =&\ a(\cdot,t) F(\cdot,t) 
\quad\mbox{on}\,\,\, \Ga^0\times(0,T), \\
U(\cdot,0) =&\ u^0 .
\end{aligned}
\right. 
\end{align}
\bbk The equivalence of strong solutions and the original and the pull-back equation is given in \cite[Theorem~2.32]{AlphonseElliottStinner_abstract}, while for a well-posedness result see Theorem~3.6 therein. 

\begin{remark}
For the weak formulations on the evolving surfaces, as in \eqref{eq:PDE weak form}, \eqref{eq:linearly implicit BDF} and \eqref{eq:ES-PDE-weak-form}, we consider a test function $\varphi$ with $\partial^{\bullet}\varphi=0$. 
For the weak formulation on a stationary surface, as in \eqref{eq:ES-PDE-weak-form-2}, we consider a test function $\psi$ only defined on the stationary initial surface. 
\end{remark}
\ebk 

We first show the following maximal $L^p$-regularity result on the stationary initial surface. 
\begin{theorem}[Maximal $L^p$-regularity in the Lagrangian coordinates]
	\label{theorem:MaxLp - Lagrangian}
	If $u^0=0$, then the solution $U$ of the pulled-back PDE \eqref{PDE:Gamma} obeys the following estimate:   
	\begin{equation}
	\label{MaxLp-0}
	\|\partial_tU\|_{L^{p}(0,T;L^q(\Ga^0))}
	+ \|U\|_{L^{p}(0,T;W^{2,q}(\Ga^0))}
	\leq C \| F\|_{L^{p}(0,T;L^q(\Ga^0))} \qquad \forall\, F \in L^{p}(0,T;L^q(\Ga^0)), 
	\end{equation}
	with $1<p,q<\infty$. The constant $C>0$ only depends on $\GT$. 
\end{theorem}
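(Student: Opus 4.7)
The plan is to reduce \eqref{PDE:Gamma} to the standard maximal $L^p$-regularity for a parabolic equation on the smooth compact manifold $\Gamma^0$ with \emph{time-independent} coefficients, and then absorb the $t$-dependence via a localization-in-time / perturbation argument. Because $a(\cdot,t)$ is bounded away from zero by \eqref{aXt}, I first rewrite the equation in non-divergence parabolic form. Expanding $\partial_t(aU)=a\,\partial_t U+(\partial_t a)U$ and dividing by $a$ yields
\begin{equation*}
\partial_t U - a(\cdot,t)^{-1}\nabla_{\Ga^0}\!\cdot\!\bigl(B(\cdot,t)\nabla_{\Ga^0}U\bigr) = F - a(\cdot,t)^{-1}(\partial_t a(\cdot,t))\,U, \qquad U(\cdot,0)=0.
\end{equation*}
Writing $A(t)U:=-a(\cdot,t)^{-1}\nabla_{\Ga^0}\!\cdot\!(B(\cdot,t)\nabla_{\Ga^0}U)$, this is a non-autonomous parabolic Cauchy problem on $L^q(\Gamma^0)$ with a smooth elliptic family $\{A(t)\}$, perturbed by a zeroth-order term with smooth coefficient.

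For each fixed $s\in[0,T]$, the \textbf{frozen operator} $A(s)$ is a uniformly elliptic second-order differential operator on the smooth compact manifold $\Gamma^0$ with smooth coefficients, by \eqref{aXt}--\eqref{BXt}. Such operators are well known to have maximal $L^p$-regularity on $L^q(\Gamma^0)$ for all $1<p,q<\infty$: this can be quoted from classical semigroup theory on compact Riemannian manifolds (e.g.\ Amann, Kunstmann--Weis, or Lunardi), since $-A(s)$ generates a bounded analytic semigroup whose negative generator has bounded imaginary powers (equivalently, an $\mathcal H^\infty$-calculus). This gives, for every $s\in[0,T]$, the stationary bound
\begin{equation*}
\|\partial_t W\|_{L^p(0,T;L^q(\Ga^0))}+\|W\|_{L^p(0,T;W^{2,q}(\Ga^0))}\le C_s\|G\|_{L^p(0,T;L^q(\Ga^0))}
\end{equation*}
for solutions of $\partial_t W+A(s)W=G$, $W(0)=0$.

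Next I pass from the frozen problem to the true one by a \textbf{time-localization/perturbation} argument. Since $a$ and $B$ are smooth in $t$ uniformly in $y$, there exists $\delta>0$ such that for any subinterval $[s,s+\delta]\subset[0,T]$ and any $t$ in this subinterval,
\begin{equation*}
\|a(\cdot,t)-a(\cdot,s)\|_{L^\infty}+\|B(\cdot,t)-B(\cdot,s)\|_{L^\infty}\le \eta,
\end{equation*}
with $\eta$ arbitrarily small (independent of $s$). The difference $(A(t)-A(s))U$ is a second-order operator with coefficients of size $O(\eta)$, hence
$\|(A(t)-A(s))U\|_{L^q(\Ga^0)}\le C\eta\|U\|_{W^{2,q}(\Ga^0)}$. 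Writing the equation on $[s,s+\delta]$ as
\begin{equation*}
\partial_t U+A(s)U=F-a^{-1}(\partial_t a)U+(A(s)-A(t))U
\end{equation*}
and applying the frozen maximal regularity estimate, I can absorb the $(A(s)-A(t))U$ contribution into the left-hand side provided $\eta$ is small enough, while the zeroth-order term $a^{-1}(\partial_t a)U$ and the initial-value contribution at time $s$ are lower order and are controlled by $\|U\|_{L^p(s,s+\delta;W^{2,q})}$ via interpolation and by the $C_t(L^q)$ trace of $U$. This yields the desired estimate on each short interval $[s,s+\delta]$.

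Finally, I \textbf{glue} the short-interval estimates by covering $[0,T]$ with finitely many intervals of length $\delta$ and iterating. At the start of each interval, the value of $U$ serves as the initial datum; its norm is controlled by the estimate on the previous interval (via the embedding $W^{1,p}(L^q)\cap L^p(W^{2,q})\hookrightarrow C_t(B^{2-2/p}_{q,p})\hookrightarrow C_t(L^q)$ and a standard lifting to zero initial data by subtracting the semigroup extension). Summing the finitely many local estimates produces the global bound \eqref{MaxLp-0}. The main technical obstacle is the careful choice of $\delta$ and the absorption constants so that the second-order perturbation $(A(t)-A(s))U$ is genuinely small in the $L^p(L^q)\to L^p(W^{2,q})$ norm; once uniform smoothness of $a$ and $B$ in $t$ is in hand, this follows from a standard continuity-in-$t$ estimate for the coefficient family.
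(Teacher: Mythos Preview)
Your argument is correct and would prove the theorem, but it follows a different route from the paper's. Both proofs freeze the coefficients at a fixed time $s$ and invoke maximal $L^p$-regularity for the resulting autonomous problem (the paper establishes this as Lemma~\ref{Lemma:time-independent} via Li--Yau Gaussian kernel bounds and the Hieber--Pr\"uss theorem; you quote it from general $\mathcal H^\infty$-calculus on compact manifolds). The divergence is in how the time dependence is removed. You localize to short intervals $[s,s+\delta]$ on which $\|A(t)-A(s)\|$ is uniformly small, absorb the second-order perturbation by smallness, and then glue the finitely many local estimates, paying at each junction for a nonzero initial datum in the trace space $B^{2-2/p}_{q,p}$. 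The paper instead applies Savar\'e's trick: it freezes at the \emph{right endpoint} $s$ of the whole interval $[0,s]$, so that the perturbation terms carry an explicit factor $|s-t|$; raising to the $p$th power and integrating by parts in $t$ turns the estimate into a Gronwall inequality for $L(s)=\|\partial_tU\|_{L^p(0,s;L^q)}^p+\|U\|_{L^p(0,s;W^{2,q})}^p$, which is then closed directly. This avoids choosing a small $\delta$, avoids handling nonzero intermediate initial data, and---importantly for the rest of the paper---transfers almost verbatim to the time-discrete setting in Section~\ref{section:discrete maxreg}. Your approach is the textbook one and is perfectly valid; the paper's is shorter and better adapted to the discrete analogue. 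One small point: make sure your frozen constant $C_s$ is uniform in $s\in[0,T]$ (or at least bounded on the finite cover), which follows from compactness and the smoothness of $a,B$ in $t$.
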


In order to prove Theorem \ref{theorem:MaxLp - Lagrangian}, we first show the following lemma --- the maximal $L^p$ regularity result for a problem with coefficients $a(y,s)$ and $B(y,s)$ frozen at some fixed time $s\in[0,T]$. 
\begin{lemma}
	\label{Lemma:time-independent}
	For any fixed $s\in(0,T]$, the solution of the
	problem
	\begin{align}
	\label{PDE:Gamma-s}
	\left\{
	\begin{aligned}
	\diff \Big(a(y,s) U(y,t) \Big)
	-\nb_{\Ga^0}\cdot\Big(B(y,s)\nb_{\Ga^0} U(y,t)\Big)
	=&\ F(y,t) && \textrm{ on }  \Ga^0 ,  \\
	U(y,0) =&\ 0 && \textrm{ on }  \Ga^0  , 
	\end{aligned}
	\right.
	\end{align}
	satisfies the following estimate: 
	\begin{align}
	\label{MaxLp-s}
	\begin{aligned}
	&\|\partial_tU\|_{L^{p}(0,s;L^q(\Ga^0))}
	+ \|U\|_{L^{p}(0,s;W^{2,q}(\Ga^0))}
	\leq C \| F\|_{L^{p}(0,s;L^q(\Ga^0))} ,
	\end{aligned}
	\end{align}
	where the constant $C>0$ may depend on $T$, but is independent of
	$s\in(0,T]$.
\end{lemma}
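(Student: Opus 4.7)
For fixed $s$, the coefficients in \eqref{PDE:Gamma-s} are time-independent, so the equation is an autonomous linear parabolic problem on the smooth compact manifold $\Ga^0$. Since $a(y,s) \ge C^{-1} > 0$ uniformly by \eqref{aXt}, dividing \eqref{PDE:Gamma-s} by $a(y,s)$ rewrites it as $\partial_t U + A_s U = a(\cdot,s)^{-1}F$ with $U(\cdot,0)=0$, where
\[A_s w := -a(\cdot,s)^{-1}\,\nb_{\Ga^0}\cdot\bigl(B(\cdot,s)\nb_{\Ga^0} w\bigr)\]
is a second-order elliptic operator on $\Ga^0$. The coefficients are smooth in $y$ (inherited from the flow map $X$), and by \eqref{aXt}--\eqref{BXt} both the uniform ellipticity constants of $A_s$ and the pointwise bounds of its coefficients are independent of $s\in[0,T]$.

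The first step is to invoke the classical maximal $L^p$-regularity theory for smooth second-order elliptic operators on closed manifolds (cf.\ the references listed in the introduction, in particular \cite{Amann,KunstmannWeis2004,Lunardi}): for each fixed $s$, taking $\lambda>0$ sufficiently large, $-(A_s+\lambda)$ is sectorial on $L^q(\Ga^0)$ with angle less than $\pi/2$ and enjoys maximal $L^p$-regularity on any bounded time interval, giving
\[\|\partial_t U\|_{L^p(0,s;L^q(\Ga^0))} + \|A_sU\|_{L^p(0,s;L^q(\Ga^0))} \leq C(s)\|F\|_{L^p(0,s;L^q(\Ga^0))},\]
after the $\lambda U$ term is absorbed by a Gronwall argument exploiting $U(\cdot,0)=0$. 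Standard elliptic regularity on $\Ga^0$ then upgrades the bound on $\|A_s U\|_{L^q}$ to a bound on $\|U\|_{W^{2,q}}$, yielding \eqref{MaxLp-s} with an a priori $s$-dependent constant.

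The main obstacle, and the whole point of the lemma, is to make this constant uniform in $s\in(0,T]$. I plan to argue by a continuity and compactness argument. The quantitative form of the classical maximal $L^p$-regularity theorem gives a constant that depends only on the $C^k$-norms of the coefficients of $A_s$ on $\Ga^0$ (for a finite $k$ determined by $m$, $p$ and $q$), on the uniform ellipticity constants, and on $T$. Since $(y,s) \mapsto (a(y,s), B(y,s))$ is smooth on the compact set $\Ga^0 \times [0,T]$, all such quantities are bounded uniformly in $s$, producing a constant independent of $s$. An equivalent compactness-contradiction route would also work: if no uniform constant existed, a sequence $s_n \to s^* \in [0,T]$ with blowing-up constants would contradict the continuous dependence of the solution of the autonomous problem on the coefficients of its elliptic part, itself a consequence of the fixed-$s^*$ maximal regularity estimate. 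Either route delivers a constant depending only on $T$ and the given flow map, as claimed.
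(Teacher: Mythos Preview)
Your approach is correct in outline but takes a genuinely different route from the paper. You stay on the fixed surface $\Ga^0$ with the variable-coefficient operator $A_s = -a(\cdot,s)^{-1}\nb_{\Ga^0}\cdot(B(\cdot,s)\nb_{\Ga^0}\,\cdot\,)$ and appeal to abstract maximal $L^p$-regularity for smooth elliptic operators on compact manifolds, then argue uniformity in $s$ via the smooth dependence of the coefficients on $s$ over the compact set $\Ga^0\times[0,T]$ (either by tracking the quantitative dependence of the maximal-regularity constant on coefficient norms, or by a compactness-contradiction argument).

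The paper instead \emph{undoes} the pull-back: it pushes the equation forward by $X(\cdot,s)$ to the surface $\Ga(s)$, where the operator becomes the pure Laplace--Beltrami operator $\Delta_{\Ga(s)}$ and the equation becomes the ordinary heat equation $\partial_t u_s - \Delta_{\Ga(s)} u_s = f_s$ on $\Ga(s)$. After the shift $U_s = e^{-t}u_s$, maximal $L^p$-regularity follows from the Li--Yau Gaussian heat-kernel bound together with the Hieber--Pr\"uss theorem; the key point is that the Gaussian constant $K_0$ depends only on a lower bound for the Ricci curvature of $\Ga(s)$, and this bound is uniform in $s\in[0,T]$ because the flow map is smooth. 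Uniformity in $s$ thus comes \emph{for free} from a single geometric quantity, with no need to track constants through an abstract maximal-regularity argument or to run a compactness contradiction.

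Both arguments are valid. The paper's route is more concrete and self-contained (the $s$-uniformity is pinned to a specific, explicitly controlled quantity), and---importantly for the rest of the article---the same Gaussian bound \eqref{Gaussian-ss} is reused in Section~\ref{section:discrete maxreg} to obtain $R$-boundedness of the resolvent on $L^q(\Ga(s))$ uniformly in $s$, which feeds into the discrete maximal regularity proof. Your route is softer and more general, but the step ``the maximal-regularity constant depends only on finitely many $C^k$-norms of the coefficients'' is not always stated explicitly in the references you cite and would need a precise citation or a short argument; the compactness-contradiction alternative you sketch is cleaner on this point.
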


\begin{proof}
	Note that $U$ is a solution of \eqref{PDE:Gamma} if and only if $u(x,t)=U(X^{-1}(x,t),t)$ is a solution of \eqref{eq:ES-PDE-weak-form}. Similarly, $U$ is a solution of \eqref{PDE:Gamma-s} if and only if $u_s(x,t)=U(X^{-1}(x,s),t)$ is a solution of 
	\begin{equation*}
	\diff \int_{\Ga(s)} u_s \vphi + \int_{\Ga(s)} \nb_{\Ga(s)} u_s
	\cdot \nb_{\Ga(s)} \vphi = 
	\int_{\Ga(s)} f_s \vphi \quad\forall\,\varphi\in L^2(\Ga(s)) ,
	\end{equation*} 
	with $f_s(x,t)=a(X^{-1}(x,s),s)^{-1}F(X^{-1}(x,s),t)$. This is the weak form of the heat equation on the fixed surface $\Ga(s)$, i.e. 
	\begin{equation*}
	\left\{
	\begin{aligned}
	\partial_t u_s(x,t) - \Delta_{\Ga(s)} u_s(x,t) = &\ f_s(x,t)  &&\text{on } \Ga(s) , \\[5pt]
	u_s(x,0) = &\ 0  &&\text{on } \Ga(s)  .
	\end{aligned}
	\right. 
	\end{equation*}
	If we define $U_s(x,t)=e^{-t}u_s(x,t)$, then $U_s$ is a solution of the following shifted equation: 
	\begin{equation}
	\label{PDE:Gamma-ss}
	\left\{
	\begin{aligned}
	\diff U_s(x,t) - \Delta_{\Ga(s)} U_s(x,t) + U_s(x,t)	= &\ F_s(x,t)  &&\text{on } \Ga(s) , \\[5pt]
	U_s(x,0) = &\ 0  &&\text{on } \Ga(s)  , \\
	\end{aligned}
	\right. 
	\end{equation}
	with $F_s(x,t):=e^{-t} a(X^{-1}(x,s),s)^{-1}F(X^{-1}(x,s),t)$.
	In view of this equivalence it is sufficient to prove that the solution of the equation above satisfies
	the following maximal $L^p$-regularity estimate:
	\begin{equation}
	\label{MaxLp-ss}
	\|\partial_tU_s\|_{L^{p}(0,s;L^q(\Ga(s)))}
	+ \|U_s\|_{L^{p}(0,s;W^{2,q}(\Ga(s)))}
	\leq C \| F_s\|_{L^{p}(0,s;L^q(\Ga(s)))} .
	\end{equation}
	
	By choosing $\varepsilon=C_7^{-1}(\alpha-1)K^{-1}$ in
	\cite[Corollary 3.1]{Li-Yau},
	we see that the fundamental solution $G_s(t,x,y)$
	of the equation \eqref{PDE:Gamma-ss}
	satisfies the following Gaussian estimate for some constant $K_0$,
	\begin{equation}
	\label{Gaussian-ss}
	0\leq G_s(t,x,y) \leq K_0t^{-1} e^{-\frac{|x-y|^2}{K_0t}} ,
	\end{equation}
	where the constant $K_0$ is independent of $s\in[0,T]$, depending only on the lower bound of the Ricci curvature of the family of surfaces $\Ga(s)$, $s\in[0,T]$. Therefore, the conditions of \cite[Theorem 3.1]{HP97} are satisfied with $m=2$ and $p(r)=K_0e^{-r/K_0}$, which implies that the solution of \eqref{PDE:Gamma-ss} satisfies the maximal $L^p$-regularity estimate \eqref{MaxLp-ss}. This completes the proof of Lemma~\ref{Lemma:time-independent}. 
\end{proof}

\begin{proof}[Proof of Theorem \ref{theorem:MaxLp - Lagrangian}]
	The proof is based on a perturbation argument, and combines it with Lemma~\ref{Lemma:time-independent}. This idea originates form Savar\'e \cite[Proof of Theorem~2.1]{Savare1993}, and had proved to be useful many times since then, in particular in the context of discrete maximal $L^p$-regularity \cite{AkrivisLiLubich2017,KunstmannLiLubich2016,AkrivisLi2017}.
	
	We rewrite \eqref{PDE:Gamma} such that the coefficients on the left-hand side are fixed at time $s$ in exchange for extra terms on the right-hand side: 
	\begin{align}
	\label{PDE:Gamma-per}
	\begin{aligned}
	&\diff \Big(a(y,s) U(y,t) \Big)
	-\nb_{\Ga^0}\cdot\Big(B(y,s)\nb_{\Ga^0} U(y,t)\Big) \\
	&= a(y,t) F(y,t)
	+\diff \Big((a(y,s)-a(y,t)) U(y,t) \Big) 
	+\nb_{\Ga^0}\cdot\Big((B(y,t)-B(y,s)\nb_{\Ga^0} U(y,t)\Big) .
	\end{aligned}
	\end{align}
	By applying Lemma~\ref{Lemma:time-independent} to the equation above in the time interval $[0,s]$,
	we obtain
	\begin{align}\label{dtULpLq1}
	\begin{aligned}
	& \|\partial_tU\|_{L^{p}(0,s;L^q(\Ga^0))}
	+ \|U\|_{L^{p}(0,s;W^{2,q}(\Ga^0))} \\
	&\leq C \| a F\|_{L^{p}(0,s;L^q(\Ga^0))}
	+C\bigg\|\diff \Big((a(y,s)-a(y,t)) U(y,t) \Big)\bigg\|_{L^{p}(0,s;L^q(\Ga^0))} \\
	&\quad
	+C\bigg\|\nb_{\Ga^0}\cdot\Big((B(y,t)-B(y,s)\nb_{\Ga^0} U(y,t)\Big)\bigg\|_{L^{p}(0,s;L^q(\Ga^0))} \\
	&\leq C\|F\|_{L^{p}(0,s;L^q(\Ga^0))}
	+ C\big(\big\| |s-t|\partial_tU\big\|_{L^{p}(0,s;L^q(\Ga^0))}
	+ \big\|(\partial_ta) U\big\|_{L^{p}(0,s;L^q(\Ga^0))}\big) \\
	&\quad 
	+ C\big\| |s-t|U\big\|_{L^{p}(0,s;W^{2,q}(\Ga^0))} ,
	\end{aligned}
	\end{align}
	where we have used the smoothness of the functions $a(y,t)$ and $B(y,t)$ to derive the last inequality. 
	We define
	\begin{equation*}
	L(t') := \|\partial_tU\|_{L^{p}(0,t';L^q(\Ga^0))}^p \! + \! \|U\|_{L^{p}(0,t';W^{2,q}(\Ga^0))}^p 
	=\int_0^{t'} \!\!\! \big(\|\pa_t U(\cdot,t)\|_{L^q(\Ga^0)}^p \! + \! \|U(\cdot,t)\|_{W^{2,q}(\Ga^0)}^p\big) \d t , \\
	\end{equation*}
	with $\pa_t L(t) = \|\pa_t U(\cdot,t)\|_{L^q(\Ga^0)}^p + \|U(\cdot,t)\|_{W^{2,q}(\Ga^0)}^p$.
	Then raising \eqref{dtULpLq1} to power $p$ yields 
	\begin{align*}
	L(s)
	&\leq C\|F\|_{L^{p}(0,s;L^q(\Ga^0))}^p
	+ C\big(\big\| |s-t|\partial_tU\big\|_{L^{p}(0,s;L^q(\Ga^0))}^p
	+ \big\|U\big\|_{L^{p}(0,s;L^q(\Ga^0))}^p\big) \\
	&\quad 
	+ C\big\| |s-t|U\big\|_{L^{p}(0,s;W^{2,q}(\Ga^0))}^p \\
	&\leq C\|F\|_{L^{p}(0,s;L^q(\Ga^0))}^p
	+C\int_0^s  (s-t)^p\big(\|\partial_tU\|_{L^q(\Ga^0)}^p
	+\|U\|_{W^{2,q}(\Ga^0)}^p\big) \d t 
	+ C\big\|U\big\|_{L^{p}(0,s;L^q(\Ga^0))}^p \\
	&\leq C\|F\|_{L^{p}(0,s;L^q(\Ga^0))}^p
	+C\int_0^s (s-t)^p \partial_tL\t \d t 
	+ C\big\|U\big\|_{L^{p}(0,s;L^q(\Ga^0))}^p \\ 
	&= C\|F\|_{L^{p}(0,s;L^q(\Ga^0))}^p
	+C\int_0^s p (s-t)^{p-1} L(t)  \d t 
	+ C\big\|U\big\|_{L^{p}(0,s;L^q(\Ga^0))}^p 
	\quad \mbox{(integration by parts)} \\
	&\leq C\|F\|_{L^{p}(0,s;L^q(\Ga^0))}^p
	+CpT^{p-1}\int_0^s L(t)  \d t 
	+ C\big\|U\big\|_{L^{p}(0,s;L^q(\Ga^0))}^p ,
	\qquad \forall\, s\in[0,T].
	\end{align*}
	Since $U(\cdot,0)=0$, it follows that 
	\begin{align*}
	\big\|U\big\|_{L^{p}(0,s;L^q(\Ga^0))}^p
	=\int_0^s \bigg\|\int_0^t \partial_tU(\cdot,\eta)\d \eta \bigg\|_{L^q(\Ga^0)}^p\d t 
	&\le \int_0^s \int_0^t t^{p-1} \|\partial_tU(\cdot,\eta)\|_{L^q(\Ga^0)}^p \d \eta \d t \\
	&\le T^{p} \int_0^s L(t) \d t .
	\end{align*}
	Substituting this into the estimate of $L(s)$ above, we obtain
	\begin{align*}
	L(s)
	&\leq C\|F\|_{L^{p}(0,s;L^q(\Ga^0))}^p
	+C\int_0^s L(t)  \d t  ,
	\quad\forall\, s\in[0,T].
	\end{align*}
	Then applying Gronwall's inequality yields 
	\begin{equation*}
	\begin{aligned}
	L(T)
	&\leq C\|F\|_{L^{p}(0,T;L^q(\Ga^0))}^p ,
	\end{aligned}
	\end{equation*}
	which implies 
	\begin{equation*}
	\begin{aligned}
	&\|\partial_tU\|_{L^{p}(0,T;L^q(\Ga^0))}
	+ \|U\|_{L^{p}(0,T;W^{2,q}(\Ga^0))}
	\leq C \| F\|_{L^{p}(0,T;L^q(\Ga^0))} .
	\end{aligned}
	\end{equation*}
	This completes the proof of Theorem \ref{theorem:MaxLp - Lagrangian}.
\end{proof}

It is crucial to note that usually maximal parabolic regularity estimates do not hold for the interesting cases $p$ or $q = \infty$. However, such type of estimate can be obtained via a space-time Sobolev embedding result as in the planar case \cite{KunstmannLiLubich2016}.

\begin{lemma}
	\label{lemma:space-time Sobolev inequality}
	Let $w \in W^{1,p}_t(0,T;L^q(\Ga(t))) \cap L^p_t(0,T;W^{2,q}(\Ga(t)))$ for $2/p + m/q < 1$ with $\Ga(t) \subset \R^{m+1}$ and $w(\cdot,0) = 0$. Then the following bound holds:
	\begin{equation}
	\label{eq:space-time Sobolev inequality}
	\|w\|_{L^\infty_t0,T;W^{1,\infty}(\Ga(t)))} \leq c_{p,q} \big( \|\mat w\|_{L^p_t(0,T;L^q(\Ga(t))) } + \|w\|_{L^p_t(0,T;W^{2,q}(\Ga(t))) } \big) .
	\end{equation}
\end{lemma}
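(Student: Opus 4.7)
The plan is to transport the estimate to the stationary initial surface $\Ga^0$ via the pull-back used in Section~\ref{sec:transl-init-surf}, and then reduce it to the classical anisotropic Sobolev embedding in the Euclidean setting by localisation.

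I would first set $W(y,t) := w(X(y,t),t)$ on $\Ga^0 \times (0,T)$. Since $X(\cdot,t): \Ga^0 \to \Ga(t)$ is a smooth diffeomorphism uniformly in $t \in [0,T]$ and the tangential gradient transforms by the smooth invertible bundle map $K(y,t)$ of \eqref{Def-K-operator}, the usual change of variables gives
\begin{equation*}
\|w(\cdot,t)\|_{W^{k,r}(\Ga(t))} \simeq \|W(\cdot,t)\|_{W^{k,r}(\Ga^0)} \quad \text{for } k\in\{0,1,2\},\ r\in[1,\infty],
\end{equation*}
with constants uniform in $t$, while $\partial_t W(y,t) = (\mat w)(X(y,t),t)$. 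The claim therefore reduces to the stationary-surface analogue
\begin{equation*}
\|W\|_{L^\infty(0,T;W^{1,\infty}(\Ga^0))} \leq c_{p,q}\bigl( \|\partial_t W\|_{L^p(0,T;L^q(\Ga^0))} + \|W\|_{L^p(0,T;W^{2,q}(\Ga^0))} \bigr),
\end{equation*}
subject to $W(\cdot,0) = 0$.

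Next, since $\Ga^0$ is a smooth compact $m$-dimensional manifold, I would cover it by finitely many smooth charts and fix a subordinate partition of unity $\{\chi_i\}$. In each chart, $\chi_i W$ reads as a function on $(0,T) \times U_i$ for a bounded smooth domain $U_i \subset \R^m$, vanishing at $t=0$; extending it by zero for $t<0$, I would invoke the classical anisotropic parabolic Sobolev embedding (see \cite{Amann} or \cite[Chapter~II]{LSU68}) which, under the sharp scaling condition $2/p + m/q < 1$, yields
\begin{equation*}
W^{1,p}(\R;L^q(U_i)) \cap L^p(\R;W^{2,q}(U_i)) \hookrightarrow C_b(\R;W^{1,\infty}(U_i))
\end{equation*}
with an embedding constant independent of $T$. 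The commutator terms produced by the cut-offs $\chi_i$ are of strictly lower spatial order and may be absorbed by the right-hand side through interpolation, again using $W(\cdot,0)=0$ to rule out initial-trace contributions. Summing the localised bounds against the partition of unity and undoing the pull-back returns the stated estimate on $\GT$.

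The only substantive ingredient is the anisotropic Euclidean embedding itself, which is classical but hinges on the sharp exponent condition $2/p + m/q < 1$; once it is invoked, the geometric pull-back introduces no essential difficulty, since the uniform smoothness of $X(\cdot,t)$ and the smooth invertibility of $K(y,t)$ in $t\in[0,T]$ are in force by the standing assumptions on the flow.
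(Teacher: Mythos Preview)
Your proposal is correct and follows essentially the same route as the paper: pull back to the fixed initial surface via $W(y,t)=w(X(y,t),t)$, use the norm equivalences induced by the smooth flow (in particular $\partial_t W=(\mat w)\circ X$), and then invoke the known anisotropic embedding on a fixed domain. The paper's proof is considerably more terse---it records only the norm equivalence for the material derivative and then cites \cite[Lemma~3.1]{KunstmannLiLubich2016} for the flat-domain estimate---whereas you spell out the chart/partition-of-unity reduction from $\Ga^0$ to Euclidean domains explicitly (the paper does this localisation only later, in the discrete analogue Lemma~\ref{lemma:discrete space-time Sobolev inequality}).
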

\begin{proof}
{\color{black} 
	In view of the definition $U(y,t)=u(X(y,t),t)$, combining this with the definition of the material derivative we have
	\begin{equation*}
	\mat u(x,t) = \diff u(X(y,t),t) = \partial_tU(y,t) , \quadfor x=X(y,t) \in \Gat ,
	\end{equation*}
	which then yields
	\begin{align*}
	\|\mat u(\cdot,t)\|_{L^q(\Gat)}^q 
	= \int_{\Ga^0} \Big| \diff u(X(y,t),t) \Big|^q a(y,t) \d y 
	= \int_{\Ga^0} \Big| \partial_t U(y,t) \Big|^q a(y,t) \d y .
	\end{align*}
Since $C^{-1}\le a(y,t) \le C$ as shown in \eqref{aXt}, we obtain the following norm equivalence: 
		\begin{equation}
		\label{eq:norm equivalence for time derivatives}
		\begin{aligned}
		C_0 \|\partial_tU(\cdot,t)\|_{L^q(\Ga^0)} \leq &\ \|\mat u(\cdot,t)\|_{L^q(\Gat)} \leq  C_1 \|\partial_tU(\cdot,t)\|_{L^q(\Ga^0)} 
		\quad \mbox{for}\,\,\, 1<q<\infty .
		\end{aligned}
		\end{equation}
Inequality \eqref{eq:space-time Sobolev inequality} follows from the same bounds for flat domains proved in \cite[Lemma~3.1]{KunstmannLiLubich2016} and the norm equivalence in \eqref{eq:norm equivalence for time derivatives}. 
}
\end{proof}

On the right-hand side of \eqref{eq:space-time Sobolev inequality}, the expression appears as in Theorem~\ref{theorem:MaxLp - surface coordinates} below.

We now translate the continuous maximal $L^p$-regularity estimate of Theorem \ref{theorem:MaxLp - Lagrangian} back to the evolving surface functional analytic setting, formulating it for the linear evolving surface PDE \eqref{eq:ES-PDE-strong-form}. 
\begin{theorem}[Maximal $L^p$-regularity in the surface coordinates]
	\label{theorem:MaxLp - surface coordinates}
	Let $u^0=0$ and let $f \in L^p_t(0,T;L^q(\Ga(t)))$ with $1<p,q<\infty$. Then the solution $u$ of the linear evolving surface PDE problem \eqref{eq:ES-PDE-strong-form} obeys the following maximal parabolic $L^p$-regularity estimate:   
	\begin{equation}
	\label{MaxLp on Gamma}
	\|\mat u\|_{L^p_t(0,T;L^q(\Ga(t))) } + \|u\|_{L^p_t(0,T;W^{2,q}(\Ga(t))) }  \leq C \|f\|_{L^p_t(0,T;L^q(\Ga(t))) } .
	\end{equation}
	Furthermore, if $f \in L^p_t(0,T;L^q(\Ga(t)))$ such that $2/p + m/q < 1$, then the following bound holds:
	\begin{equation}
	\label{Linfty on Gamma}
	\|u\|_{L^\infty_t0,T;W^{1,\infty}(\Ga(t)))} 
	\leq c_{p,q} \, C  \|f\|_{L^p_t(0,T;L^q(\Ga(t))) } .
	\end{equation}
	The constants $C > 0$ and $c_{p,q} > 0$ may depend on the final time $T$ {\rm(}increasing function of $T)$.
\end{theorem}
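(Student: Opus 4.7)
The plan is to deduce Theorem \ref{theorem:MaxLp - surface coordinates} directly from Theorem \ref{theorem:MaxLp - Lagrangian} by invoking the pull-back map $U(y,t) = u(X(y,t),t)$ together with uniform-in-$t$ norm equivalences between the evolving-surface norms of $u$ and the stationary-surface norms of $U$; the second inequality then drops out of Lemma \ref{lemma:space-time Sobolev inequality}.

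First I would record the norm equivalences. Since the flow map $X(\cdot,t):\Ga^0\to\Ga(t)$ is a smooth diffeomorphism uniformly in $t\in[0,T]$, the Riemannian volume ratio $a(y,t)$ in \eqref{aXt} is bounded above and below, so $\|u(\cdot,t)\|_{L^q(\Ga(t))}$ and $\|U(\cdot,t)\|_{L^q(\Ga^0)}$ are equivalent uniformly in $t$. Combining this with the operator $K(y,t)$ in \eqref{Def-K-operator}, which is a smooth invertible bundle map between $\T_y\Ga^0$ and $\T_{X(y,t)}\Ga(t)$, yields the equivalence of $\|u(\cdot,t)\|_{W^{2,q}(\Gat)}$ and $\|U(\cdot,t)\|_{W^{2,q}(\Ga^0)}$, again uniformly in $t\in[0,T]$. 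The corresponding equivalence for the material derivative, $\|\mat u(\cdot,t)\|_{L^q(\Gat)} \simeq \|\partial_t U(\cdot,t)\|_{L^q(\Ga^0)}$, has already been worked out in \eqref{eq:norm equivalence for time derivatives}. Integrating the $p$-th powers over $t\in(0,T)$ immediately passes each of these pointwise-in-$t$ equivalences to the corresponding Bochner spaces.

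Next I would invoke the equivalence between the two weak formulations: $u\in H^1(\GT)$ solves \eqref{eq:ES-PDE-weak-form} with source $f$ if and only if $U$ solves \eqref{eq:ES-PDE-weak-form-2}, equivalently \eqref{PDE:Gamma}, with source $F(y,t) = f(X(y,t),t)$. In particular $\|F\|_{L^p(0,T;L^q(\Ga^0))} \simeq \|f\|_{L^p_t(0,T;L^q(\Gat))}$. Applying Theorem \ref{theorem:MaxLp - Lagrangian} to $U$ gives
\begin{equation*}
\|\partial_t U\|_{L^p(0,T;L^q(\Ga^0))} + \|U\|_{L^p(0,T;W^{2,q}(\Ga^0))} \leq C\|F\|_{L^p(0,T;L^q(\Ga^0))},
\end{equation*}
and translating each norm through the equivalences above yields the maximal $L^p$-regularity estimate \eqref{MaxLp on Gamma}.

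For the second claim, given the hypothesis $2/p + m/q < 1$ I would simply feed $w=u$ into Lemma \ref{lemma:space-time Sobolev inequality}: this is admissible because $u(\cdot,0)=0$ and because the already-established \eqref{MaxLp on Gamma} places $u$ in $W^{1,p}_t(0,T;L^q(\Ga(t))) \cap L^p_t(0,T;W^{2,q}(\Ga(t)))$. The resulting bound combined with \eqref{MaxLp on Gamma} gives \eqref{Linfty on Gamma}. I expect the only subtlety to be book-keeping the uniformity of constants: one has to check that the equivalence constants for $\|\cdot\|_{W^{2,q}(\Gat)} \simeq \|\cdot\|_{W^{2,q}(\Ga^0)}$ coming from the smoothness of $X(\cdot,t)$ and $X^{-1}(\cdot,t)$ are indeed bounded uniformly in $t\in[0,T]$, which is ensured by our assumption that $X$ and $X^{-1}$ are smooth with respect to $(y,t)$ up to $t=T$. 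The dependence of $C$ on $T$ is then inherited from Theorem \ref{theorem:MaxLp - Lagrangian} and is monotone increasing in $T$, as claimed.
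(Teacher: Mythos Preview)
Your proposal is correct and follows essentially the same approach as the paper: deduce \eqref{MaxLp on Gamma} from Theorem~\ref{theorem:MaxLp - Lagrangian} via the uniform-in-$t$ norm equivalences \eqref{eq:norm equivalence for time derivatives} and the $W^{2,q}$ analogue coming from the smoothness of $X^{-1}(\cdot,t)$, then obtain \eqref{Linfty on Gamma} by combining \eqref{MaxLp on Gamma} with Lemma~\ref{lemma:space-time Sobolev inequality}. The paper's proof is slightly terser but structurally identical.
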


\begin{proof}
Since $u(x,t)=U(X^{-1}(x,t),t)$ and the inverse flow map $X^{-1}(x,t)$ is smooth with respect to $x$, through the composition of the two functions $U(\cdot,t)$ and $X^{-1}(\cdot,t)$ we obtain 
		\begin{equation}
		\label{eq:norm equivalence for spatial derivatives}
		\begin{aligned}
		C_0' \|U(\cdot,t)\|_{W^{2,q}(\Ga^0)} \leq & 
		\|u(\cdot,t)\|_{W^{2,q}(\Ga(t))} \leq  C_1' \|U(\cdot,t)\|_{W^{2,q}(\Ga^0)} .
		\end{aligned}
		\end{equation}
In view of the norm equivalence relations in \eqref{eq:norm equivalence for time derivatives} and \eqref{eq:norm equivalence for spatial derivatives}, inequality \eqref{MaxLp on Gamma} is an immediate consequence of \eqref{MaxLp-0}. The second estimate in \eqref{Linfty on Gamma} directly follows from the first (with suitable $p$ and $q$) and Lemma~\ref{lemma:space-time Sobolev inequality}.
\end{proof}

\section{Maximal $L^p$-regularity of BDF methods for linear evolving surface PDEs}
\label{section:discrete maxreg}

\subsection{BDF methods for evolving surface PDEs}
\label{section:BDF}
We consider a $k$-step BDF method for the weak formulation \eqref{eq:ES-PDE-weak-form}, with $1\leq k \leq 6$. For a time stepsize $\tau > 0$ and for $t_n = n\tau \leq T$ we determine a semi-discrete approximation $u^n$ to $u(\cdot,t_n)$ by
\begin{align}
\label{linearEq:linearly-BDF}
\begin{aligned}
\frac 1\tau \sum_{j=0}^k\delta_j \int_{\Ga(t_{n-j})}u^{n-j} \vphi^{n-j}  + 
\int_{\Ga(t_{n})} \nabla_{\Ga(t_{n})}u^{n}  \cdot  \nabla_{\Ga(t_{n})} \vphi^n = 
\int_{\Ga(t_{n})} f(\cdot,t_n) \vphi^n , 
\quad n \geq k ,
\end{aligned}
\end{align}
for all $\vphi^n\in\Gamma(t_n)$ such that $\varphi^n\circ X(\cdot,t_n)=\varphi^0$, $n=0,1,\dots,N$.
The starting values $u^i$, $i=0,\dotsc,k-1$, are assumed to be given. 
They can be precomputed using either a lower order method with smaller step sizes, or an implicit Runge--Kutta method.
%

The order of the method for $k \leq 5$ is known to be retained for parabolic abstract evolution equations \cite{AkrivisLubich_quasilinBDF} (see the recent paper \cite{AkrivisChenYuZhou} for $k = 6$), for linear parabolic evolving surface PDEs \cite{LubichMansourVenkataraman_bdsurf}, for general partial differential equations \cite{AkrivisLiLubich2017,AkrivisLi2017} using discrete maximal regularity, and also for discretizations of the mentioned geometric flows \cite{soldrivenBDF,MCF,MCF_soldriven,MCFgeneralised}.

\subsection{Discrete maximal $L^p$-regularity}

Similarly as in the time-continuous case, the equations \eqref{linearEq:linearly-BDF} are pulled back to the initial surface, then the functions $U^n(y)=u^n(X(y,t_n))$, $n \geq k$, are solutions of the following problem: 
\begin{align}
\label{BDF: ES-PDE-weak-form-2}
\begin{aligned}
\frac 1\tau \sum_{j=0}^k\delta_j \int_{\Ga^0}a(\cdot,t_{n-j}) U^{n-j} \psi 
+ \int_{\Ga^0} B(\cdot,t_n)\nb_{\Ga^0} U^n 
\cdot \nb_{\Ga^0} \psi   = \int_{\Ga^0} a(\cdot,t_n) F^n \psi , \quad n \geq k,
\end{aligned}
\end{align}
with $F^n=f(X(\cdot,t_n),t_n)$ for all $\psi\in H^1(\Ga^0)$. 
Note that \eqref{BDF: ES-PDE-weak-form-2} is the weak formulation of the following PDE problem on the initial surface $\Ga^0$:
\begin{equation}
\label{eq:BDF - pulled back}
\frac{1}{\tau}\sum_{j=0}^k\delta_j  a(\cdot,t_{n-j}) U^{n-j} 
-\nb_{\Ga^0}\cdot\Big(B(\cdot,t_n)\nb_{\Ga^0} U^n\Big)
= a(\cdot,t_n) F^n .
\end{equation}
For a sequence of functions $(U^n)_{n=0}^N$ defined on the initial surface $\Ga^0$, we denote by 
\begin{equation}
\label{eq:discrete BDF derivative}
\overline\partial_\tau U^n = \frac 1\tau \sum_{j=0}^k\delta_j U^{n-j}, \quadfor n= 0,1,2,\dotsc,
\end{equation}
the discrete time derivative defined by the $k$-step BDF method 
(where $U^n=0$ for $n\le -1$). 
Similarly as in \cite{AkrivisLiLubich2017,KunstmannLiLubich2016}, \bbk considering piecewise constant functions, \ebk for a sequence of functions $(v^n)_{n=0}^N \subset W^{j,q}(\Ga^0)$ we define the norm 
\begin{align}
\|(v^n)_{n=0}^N\|_{L^p(W^{j,q}(\Ga^0))}
= 
\left\{
\begin{aligned}
&\bigg(\tau\sum_{n=0}^N\|v^n\|_{W^{j,q}(\Ga^0)}^p\bigg)^{\frac1p} ,
&&\mbox{for}\,\,\, p\in[1,\infty),\\[5pt] 
&\max_{0\leq n\leq N}\|v^n\|_{W^{j,q}(\Ga^0)} , &&\mbox{for}\,\,\, p = \infty .
\end{aligned}
\right.
\end{align}

In terms of these notations, we have the following result. 
\begin{theorem}[Discrete maximal $L^p$-regularity for BDF methods]
	\label{theorem:MaxLp - LagrangianBDF}
	Let $F^n \in L^q(\Ga^0)$ for $n \tau \leq T$, with $1<q<\infty$. 
	Then the solutions of \eqref{BDF: ES-PDE-weak-form-2} obey the following estimate for $1<p<\infty$ and $N \leq T/\tau$, 
	\begin{align}
	\label{BDFMaxLp-s}
	\begin{aligned}
	&\ \bigg\|\bigg(\frac{U^n-U^{n-1}}{\tau}\bigg)_{n=0}^N \bigg\|_{L^{p}(L^q (\Ga^0))}
	+ \|(U^n)_{n=0}^N\|_{L^{p}(W^{2,q} (\Ga^0))}\\
	\leq &\ C \|(F^n)_{n=k}^N\|_{L^{p}(L^q (\Ga^0))}
	+ C(\tau^{-1}\|(U^i)_{i=0}^{k-1}\|_{L^{p}(L^q (\Ga^0))}+ \|(U^i)_{i=0}^{k-1}\|_{L^{p}(W^{2,q} (\Ga^0))}) ,
	\end{aligned}
	\end{align}
	where the term $\|(F^n)_{n=k}^N\|_{L^{p}(L^q(\Ga^0))}$ vanishes when $N<k$, and the constant $C>0$ is independent of $\tau$ and $N$, but may depend on $p,q$ and on $T$. 
\end{theorem}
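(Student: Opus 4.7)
The plan is to transfer the proof of Theorem~\ref{theorem:MaxLp - Lagrangian} into the discrete BDF setting, following the same Savar\'e-type perturbation-in-time strategy. Step~1 is to reduce to the case of homogeneous starting values $U^0=\cdots=U^{k-1}=0$: the contribution of the given $U^i$, $i\le k-1$, is absorbed by the last bracket on the right-hand side of \eqref{BDFMaxLp-s} by splitting $(U^n)$ into a sequence carrying the starting data (bounded directly in terms of those norms by solving the homogeneous BDF recursion) and a sequence solving the same scheme with zero starting values and source $F^n$.

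Step~2 is a frozen-coefficient analog of Lemma~\ref{Lemma:time-independent}: for any fixed $s\in(0,T]$, the BDF scheme with coefficients $a(\cdot,s)$, $B(\cdot,s)$ frozen at $s$ and zero starting values is equivalent, via the change of variables $U^n(y)=e^{t_n}U_s^n(X(y,s))$ used in the proof of Lemma~\ref{Lemma:time-independent}, to a BDF discretization of the shifted heat equation \eqref{PDE:Gamma-ss} on the stationary surface $\Ga(s)$. Its semigroup obeys the uniform Gaussian estimate \eqref{Gaussian-ss}, so the hypotheses of the abstract discrete maximal $L^p$-regularity theorem of \cite{KovacsLiLubich2016} are satisfied with constants independent of $s$, yielding
\begin{equation*}
\|(\overline\partial_\tau U^n)\|_{L^p(L^q(\Ga^0))} + \|(U^n)\|_{L^p(W^{2,q}(\Ga^0))} \le C\|(F^n)\|_{L^p(L^q(\Ga^0))}.
\end{equation*}

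Step~3 is the perturbation. For any fixed $n^*\in\{k,\ldots,N\}$, rewrite \eqref{eq:BDF - pulled back} by freezing the coefficients at $t_{n^*}$, moving the differences $a(\cdot,t_{n^*})-a(\cdot,t_{n-j})$ and $B(\cdot,t_n)-B(\cdot,t_{n^*})$ to the right-hand side. Smoothness of $a$ and $B$ gives $|B(\cdot,t_n)-B(\cdot,t_{n^*})|+|\nabla_{\Ga^0}(B(\cdot,t_n)-B(\cdot,t_{n^*}))|\le C(t_{n^*}-t_n)$, so the $B$-perturbation contributes $C(t_{n^*}-t_n)\|U^n\|_{W^{2,q}(\Ga^0)}$. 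The $a$-perturbation splits as
\begin{equation*}
\bigl(a(\cdot,t_{n^*})-a(\cdot,t_n)\bigr)\overline\partial_\tau U^n + \frac{1}{\tau}\sum_{j=1}^k \delta_j\bigl(a(\cdot,t_n)-a(\cdot,t_{n-j})\bigr) U^{n-j},
\end{equation*}
whose two pieces are bounded by $C(t_{n^*}-t_n)\|\overline\partial_\tau U^n\|_{L^q}$ and $C\sum_{j=1}^{k}\|U^{n-j}\|_{L^q}$ respectively, using $|a(\cdot,t_n)-a(\cdot,t_{n-j})|\le Cj\tau$. Plugging this into the Step~2 estimate on indices $k\le n\le n^*$ and taking $p$-th powers yields, setting $\mathcal{L}(n^*):=\|(\overline\partial_\tau U^n)_{n=k}^{n^*}\|_{L^p(L^q)}^p + \|(U^n)_{n=k}^{n^*}\|_{L^p(W^{2,q})}^p$,
\begin{equation*}
\mathcal{L}(n^*) \le C\|F\|^p_{L^p(L^q)} + C\!\sum_{n=k}^{n^*}\!\tau(t_{n^*}-t_n)^p\bigl(\|\overline\partial_\tau U^n\|^p_{L^q} + \|U^n\|^p_{W^{2,q}}\bigr) + C\|(U^n)_{n=k}^{n^*}\|_{L^p(L^q)}^p.
\end{equation*}

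The main obstacle will be converting the weighted middle term into a form amenable to discrete Gronwall, the discrete counterpart of the integration-by-parts step in the proof of Theorem~\ref{theorem:MaxLp - Lagrangian}. By Abel summation, using $\mathcal{L}(k-1)=0$ and $(t_{n^*}-t_n)^p-(t_{n^*}-t_{n+1})^p\le pT^{p-1}\tau$,
\begin{equation*}
\sum_{n=k}^{n^*}\tau(t_{n^*}-t_n)^p\bigl(\|\overline\partial_\tau U^n\|^p_{L^q}+\|U^n\|^p_{W^{2,q}}\bigr) \le pT^{p-1}\!\sum_{n=k}^{n^*-1}\tau\,\mathcal{L}(n),
\end{equation*}
while a discrete Poincar\'e-type bound $\|(U^n)_{n=k}^{n^*}\|_{L^p(L^q)}^p \le CT^p\|(\overline\partial_\tau U^n)_{n=k}^{n^*}\|_{L^p(L^q)}^p$, obtained by inverting the generating polynomial $\delta(\zeta)$ under zero starting data and applying Young's convolution inequality (the inverse coefficients are summable for the A$(\alpha)$-stable BDF methods with $k\le 6$), absorbs the last term. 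The resulting inequality $\mathcal{L}(n^*)\le C\|F\|^p + C\sum_{n\le n^*}\tau\,\mathcal{L}(n)$ is then closed by discrete Gronwall, yielding \eqref{BDFMaxLp-s}.
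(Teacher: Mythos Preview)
Your overall strategy---frozen-coefficient discrete maximal regularity plus a Savar\'e-type time perturbation closed by discrete Gronwall---is exactly the paper's. Two technical steps, however, do not go through as you have written them.

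In Step~2, the exponential substitution $U^n(y)=e^{t_n}U_s^n(X(y,s))$ does \emph{not} convert the BDF recursion for the heat equation on $\Ga(s)$ into the BDF recursion for the shifted equation \eqref{PDE:Gamma-ss}. In the continuous setting $\partial_t(e^tU)=e^t(\partial_tU+U)$, so the shift appears cleanly; but substituting $u_s^{n-j}=e^{t_{n-j}}U_s^{n-j}$ into $\tfrac1\tau\sum_j\delta_j u_s^{n-j}$ gives $e^{t_n}\tfrac1\tau\sum_j\delta_j e^{-j\tau}U_s^{n-j}$, i.e.\ a scheme with modified generating polynomial $\delta(e^{-\tau}\zeta)$, not $\delta(\zeta)+1$. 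The result of \cite{KovacsLiLubich2016} is stated for the standard BDF polynomial, and you would need an extra argument to control the $\tau$-dependent perturbation of the coefficients. The paper avoids this entirely: it applies \cite{KovacsLiLubich2016} directly to the BDF scheme for the shifted operator $\calA_q(s)=\Delta_{\Ga(s)}-1$ (whose resolvent is $R$-bounded uniformly in $s$ via the Gaussian estimate and \cite[Theorem~8.6]{KunstmannWeis2004}), obtaining \eqref{BDF-MaxLp-ssa}, and then removes the extra $+U_s^n$ term by adding it to both sides and absorbing $\|(U_s^n)\|_{L^p(L^q)}$ with the same Gronwall device you already use elsewhere.

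In Step~3, your discrete Poincar\'e bound cannot be justified by ``Young's convolution inequality with summable inverse coefficients'': since $\delta(1)=0$, the series $1/\delta(\zeta)$ has a simple pole at $\zeta=1$, and its coefficients tend to the nonzero value $1/\mu(1)$ where $\delta(\zeta)=(1-\zeta)\mu(\zeta)$. They are bounded (by zero-stability of BDF for $k\le 6$) but not $\ell^1$, so the $\ell^1*\ell^p\to\ell^p$ Young inequality is unavailable. The estimate you want is still true, via H\"older on the bounded-coefficient convolution over at most $N\le T/\tau$ terms, but the paper sidesteps the issue altogether by phrasing the whole argument in terms of the first-order difference $(U^n-U^{n-1})/\tau$ rather than $\overline\partial_\tau U^n$ (the two are equivalent up to starting-value terms by \eqref{BDFUn-dtUn}--\eqref{Dtau-Dtauk}); then $U^n=\tau\sum_{j\le n}(U^j-U^{j-1})/\tau$ telescopes and the Poincar\'e-type bound follows from a single H\"older inequality, as in the lines following \eqref{eq:maxreg bound - pre power p}.
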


\begin{remark}{\upshape
		Since $\tfrac1\tau \sum_{j=0}^k \delta_j U^{n-j}$ can be expressed as $\tfrac1\tau \sum_{j=0}^k \delta_j U^{n-j} =  \sum_{j=0}^{k-1} \sigma_j \tfrac{U^{n-j}-U^{n-j-1}}{\tau}$ with some coefficients $\sigma_j$, $j=0,\dots,k-1$, it follows that 
		\begin{align}
		\label{BDFUn-dtUn}
		\big\|\big(\overline\partial_\tau U^n\big)_{n=0}^N \big\|_{L^{p}(L^q (\Ga^0))} 
		\leq &\ C \bigg\|\bigg(\frac{U^n-U^{n-1}}{\tau}\bigg)_{n=0}^N \bigg\|_{L^{p}(L^q (\Ga^0))} \\
		&\  + C(\tau^{-1}\|(U^i)_{i=0}^{k-1}\|_{L^{p}(L^q (\Ga^0))}+ \|(U^i)_{i=0}^{k-1}\|_{L^{p}(W^{2,q} (\Ga^0))})  \qquad \mbox{for} \ N \ge k.
		\end{align}
		Therefore, \eqref{BDFMaxLp-s} implies
		\begin{align*}
		\begin{aligned}
		&\ \big\|\big(\overline\partial_\tau U^n\big)_{n=0}^N \big\|_{L^{p}(L^q (\Ga^0))} 
		+ \|(U^n)_{n=0}^N\|_{L^{p}(W^{2,q} (\Ga^0))}\\
		\leq &\ C \|(F^n)_{n=k}^N\|_{L^{p}(L^q (\Ga^0))}
		+ C(\tau^{-1}\|(U^i)_{i=0}^{k-1}\|_{L^{p}(L^q (\Ga^0))}+ \|(U^i)_{i=0}^{k-1}\|_{L^{p}(W^{2,q} (\Ga^0))}) ,
		\end{aligned}
		\end{align*}
	}
\end{remark}

{\it Proof of Theorem~\ref{theorem:MaxLp - LagrangianBDF}.}$\,\,$
The proof is divided into two parts. 
In Part I, we show discrete maximal $L^p$-regularity for stationary surfaces, via a series of auxiliary lemmas. 
In Part II, we extend the result to evolving surface problems by using the result from Part I and a perturbation argument in time.  The proof has a parallel structure to the proof of Theorem \ref{theorem:MaxLp - Lagrangian} in the continuous case.

{\it Part I: Discrete maximal $L^p$-regularity on a stationary surface.}


We start by recalling that \cite[Example 3.7.5]{ABHN} implies that the self-adjoint negative definite operator $\calA_2(s)=\Delta_{\Ga(s)}-1:D(\calA_2(s))\rightarrow L^2(\Ga(s))$ with domain $D(\calA_2(s))=H^2(\Ga(s))$ generates a bounded analytic semigroup $\{E_2(z,s)\}_{z\in\Sigma_{\pi/2}}$ on $L^2(\Ga(s))$, where $\Sigma_{\pi/2} = \{z\in\C: |{\rm arg}(z)| <\pi/2\}$ (a sector of angle $\pi/2$ on the complex plane). We have seen that the kernel $G_s(t,x,y)$ of the semigroup $\{E_2(t,s)\}_{t>0}$ satisfies the Gaussian estimate \eqref{Gaussian-ss}. Consequently, the kernel $G_s(t,x,y)$ has an analytic extension to the right half-plane, satisfying 
(see \cite[pp.\ 103]{Davis89}) 
\begin{equation}
\label{GKernelE0}
|G_s(z,x,y)| \leq C_\theta |z|^{-\frac{m}{2}}
e^{-\frac{|x-y|^2}{C_\theta |z|}}, 
\quad \forall\, z\in \Sigma_{\theta},\,\,\forall\, x,y\in\Ga(s) ,
\quad\forall\,\theta\in(0,\pi/2), 
\end{equation}
where the constant $C_\theta$ is independent of $s\in[0,T]$ (depending only on $K_0$ and $\theta$). In other words, the rotated operator $-e^{i\theta}\calA_2(s)$ satisfies the condition 
of \cite[Theorem 8.6, with $m=2$ and 
$g(r)=C_\theta e^{-r^2/C_\theta}$]{KunstmannWeis2004}; also see \cite[Remark 8.23]{KunstmannWeis2004}.
As a consequence of \cite[Theorem 8.6]{KunstmannWeis2004}, 
$\{E_2(z,s)\}_{z\in\Sigma_{\pi/2}}$ extends to an analytic 
semigroup $\{E_q(z,s)\}_{z\in\Sigma_{\pi/2}}$ on $L^q(\Ga(s))$, for all $1<q<\infty$, $R$-bounded in the sector $\Sigma_\theta$ for all $\theta\in(0,\pi/2)$, where the $R$-bound is independent of $s\in[0,T]$ (depending only on $C_\theta$ and $q$). We refer to \cite{KunstmannWeis2004} for a general discussion on $R$-boundedness. 
%

Then Weis' characterization of maximal $L^p$-regularity \cite[Theorem 4.2]{Weis2001_2} immediately implies the following result. 
\begin{lemma}[$R$-boundedness of the resolvent, \cite{Weis2001_2}]
	\label{RbdRes}
	Let $\calA_q(s)$ denote the generator of the semigroup $\{E_q(t,s)\}_{t>0}$, with $1<q<\infty$. 
	Then the set $\{z(z-\calA_q(s))^{-1}\,:\, z\in \varSigma_\theta \}$ is $R$-bounded for all $\theta\in(0,\pi)$, and the $R$-bound depends only on $K_0$, $\theta$ and $q$. 
\end{lemma}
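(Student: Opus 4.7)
The plan is to derive the lemma essentially as a direct consequence of what has just been established about the semigroup $\{E_q(z,s)\}_{z\in\Sigma_{\pi/2}}$, combined with the well-known equivalence between $R$-bounded analytic semigroups and $R$-sectorial generators. Just before the lemma, the Gaussian estimate \eqref{GKernelE0} together with \cite[Theorem~8.6]{KunstmannWeis2004} was used to show that $\{E_q(z,s)\}$ extends to an analytic semigroup that is $R$-bounded on each subsector $\Sigma_\theta$ for $\theta\in(0,\pi/2)$, with $R$-bound depending only on $K_0$, $\theta$, and $q$ (in particular, independent of $s\in[0,T]$). The present lemma is simply the dual statement about the resolvent, which is exactly the content of Weis's characterization \cite[Theorem~4.2]{Weis2001_2}.

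In more detail, I would proceed as follows. First, fix $\theta\in(0,\pi)$ arbitrarily and choose $\delta\in(0,\pi/2)$ such that $\theta<\pi/2+\delta$. By the previous step, the family $\{E_q(z,s):z\in\Sigma_\delta\}$ is $R$-bounded, with a constant depending only on $K_0$, $\delta$, and $q$. Next, I would invoke the Laplace/Cauchy representation of the resolvent along a contour in $\Sigma_\delta$, namely
\begin{equation*}
(\lambda-\calA_q(s))^{-1}=\int_{\Gamma}e^{-\lambda z}E_q(z,s)\,\d z,\qquad \lambda\in\Sigma_\theta,
\end{equation*}
for a suitable rotated half-line $\Gamma\subset \Sigma_\delta$ chosen so that $\Re(\lambda z)$ grows linearly in $|z|$ uniformly for $\lambda$ in any compact subset of $\Sigma_\theta$. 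Multiplying by $\lambda$ and applying the standard fact that integrals of $R$-bounded operator families with scalar-valued integrable weights remain $R$-bounded (see \cite[Corollary~2.14]{KunstmannWeis2004}), one obtains $R$-boundedness of $\{\lambda(\lambda-\calA_q(s))^{-1}:\lambda\in\Sigma_\theta\}$, with a constant expressible in terms of the $R$-bound of the semigroup, the integrated weight, and $\theta$. Since each of these quantities only depends on $K_0$, $\theta$, and $q$, the uniformity in $s\in[0,T]$ follows.

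The main technical point to be careful with is the bookkeeping in the sector conversion: the semigroup is $R$-bounded only on sectors of opening angle strictly less than $\pi/2$, whereas the claim asks for resolvent $R$-boundedness on sectors of opening less than $\pi$. This is precisely the content of Weis's equivalence, but one must ensure that, for any given $\theta\in(0,\pi)$, a semigroup sector $\Sigma_\delta$ with $\delta<\pi/2$ can be chosen so that the above Cauchy integral converges absolutely uniformly for $\lambda\in\Sigma_\theta$, and so that the weight $|\lambda|\int_\Gamma|e^{-\lambda z}|\,|\d z|$ stays bounded. Apart from this angle-tracking, the proof is essentially a citation of \cite[Theorem~4.2]{Weis2001_2}, which justifies the paper's phrasing that the result ``immediately implies'' the lemma.
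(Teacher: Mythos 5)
Your proposal is correct and follows essentially the same route as the paper: the paper simply cites Weis's characterization \cite[Theorem~4.2]{Weis2001_2} to pass from the $R$-boundedness of the analytic semigroup $\{E_q(z,s)\}$ on subsectors $\Sigma_\delta$, $\delta<\pi/2$ (established just before via the Gaussian bound and \cite[Theorem~8.6]{KunstmannWeis2004}), to the $R$-sectoriality of the resolvent on $\Sigma_\theta$ for all $\theta<\pi$. What you have done is unpack the standard Laplace-transform/contour-rotation argument inside that equivalence, with the correct angle bookkeeping and the same uniformity in $s$ via the $s$-independence of $K_0$, so the two proofs are substantively identical.
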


\begin{remark}{\upshape
		The operator $\calA_2(s)^{-1}:L^2(\Ga(s)) \rightarrow D(\calA_2(s))$ naturally extends to $\calA_q(s)^{-1}:L^q(\Ga(s))\rightarrow D(\calA_q(s))$ with  $\calA_q(s)$ denoting the operator $\Delta_{\Ga(s)}-1$ with domain $W^{2,q}(\Ga(s))$; see \cite[Appendix]{JinLiZhou2017-NM}. 
	}
\end{remark}

Lemma~\ref{RbdRes} and \cite[Theorem 4.1 and 4.2, Remark 4.3]{KovacsLiLubich2016} imply the following maximal $L^{p}$-regularity result for BDF discretizations of the heat equation on a stationary surface.

\begin{lemma}
	\label{Lemma:time-independent for BDF}
	For any given $s\in[0,T]$, the solutions $U_s^n$ of the discretized equation 
	\begin{equation}
	\label{BDF:Gamma-ss}
	\frac{1}{\tau}\sum_{j=0}^k\delta_j  U_s^{n-j} - \Delta_{\Ga(s)}U_s^n	= F_s^n , \qquad \text{on} \ \ \Ga(s) ,\quad k\leq n \leq N,
	\end{equation}
	with starting values $U_s^i$, $i=0,\dots,k-1$, and $F_s^n:=a(X^{-1}(\cdot,s),s)^{-1}F^n(X^{-1}(\cdot,s))$, satisfy the following discrete maximal $L^{p}$-regularity estimate for $0\le N\le T/\tau$: 
	\begin{align}
	\label{BDF-MaxLp-ss}
	&\ \bigg\|\bigg(\frac{U_s^n-U_s^{n-1}}{\tau}\bigg)_{n=0}^N \bigg\|_{L^{p}(L^q(\Ga^0))}
	+ \|(U_s^n)_{n=0}^N\|_{L^{p}(W^{2,q}(\Ga^0))} \nonumber \\
	\leq &\ C \| (F_s^n)_{n=k}^N\|_{L^{p}(L^q(\Ga^0))} 
	+ C(\tau^{-1}\|(U_s^i)_{i=0}^{k-1}\|_{L^{p}(L^q(\Ga^0))}+ \|(U_s^i)_{i=0}^{k-1}\|_{L^{p}(W^{2,q}(\Ga^0))})  
	\end{align}
	where $U^{-1}_s:=0$ and the term $\| (F_s^n)_{n=k}^N \|_{L^{p}(L^q(\Ga^0))}$ vanishes if $N<k$, and the constant $C>0$ is independent of $N$, $\tau$ and $s\in[0,T]$.
\end{lemma}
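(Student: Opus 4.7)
The plan is to obtain Lemma~\ref{Lemma:time-independent for BDF} as a direct application of the abstract BDF discrete maximal $L^p$-regularity framework of \cite[Theorems~4.1--4.2 and Remark~4.3]{KovacsLiLubich2016}, with the $R$-boundedness of resolvents supplied by Lemma~\ref{RbdRes} as the sole input. Because the Laplace--Beltrami operator on a closed surface has a non-trivial kernel, I would first shift by one and rewrite \eqref{BDF:Gamma-ss} in terms of the generator $\calA_q(s)=\Delta_{\Ga(s)}-\mathrm{I}$ introduced above Lemma~\ref{RbdRes}, obtaining
\[
\overline\partial_\tau U_s^n - \calA_q(s) U_s^n = F_s^n + U_s^n, \qquad k\le n\le N,
\]
with the same starting values and a zeroth-order perturbation $U_s^n$ on the right to be absorbed later. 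This is precisely the form to which the abstract theorem applies.

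For the second step I would invoke the abstract result. Since Lemma~\ref{RbdRes} provides the $R$-bound of $\{z(z-\calA_q(s))^{-1}:z\in\Sigma_\theta\}$ uniformly in $s\in[0,T]$ for every $\theta\in(0,\pi)$---in particular for some $\theta>\pi/2+\alpha_k$ compatible with the A($\alpha_k$)-stability of the $k$-step BDF method for $1\le k\le 6$---\cite[Theorems~4.1--4.2 and Remark~4.3]{KovacsLiLubich2016} yields
\[
\big\|(\overline\partial_\tau U_s^n)_{n=0}^{N}\big\|_{L^p(L^q(\Ga(s)))} + \big\|(\calA_q(s) U_s^n)_{n=0}^{N}\big\|_{L^p(L^q(\Ga(s)))} \le C \|(F_s^n+U_s^n)_{n=k}^{N}\|_{L^p(L^q(\Ga(s)))} + C_{\mathrm{sv}},
\]
where $C_{\mathrm{sv}}$ collects the starting-value contributions appearing on the right of \eqref{BDF-MaxLp-ss}, and $C$ is independent of $s$ and $\tau$ by the uniformity of the $R$-bound. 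The uniform elliptic-regularity equivalence $\|\calA_q(s)U\|_{L^q(\Ga(s))}\simeq \|U\|_{W^{2,q}(\Ga(s))}$ on the compact smooth surfaces $\Ga(s)$, combined with the smooth change of variable $x\mapsto X^{-1}(x,s)$ from $\Ga(s)$ to $\Ga^0$, transfers all norms to $\Ga^0$ with $s$-independent constants and produces the $W^{2,q}(\Ga^0)$ norm appearing in \eqref{BDF-MaxLp-ss}.

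Two items remain: absorbing the perturbation $\|(U_s^n)\|_{L^p(L^q)}$ and converting the $k$-step combination $\overline\partial_\tau U_s^n$ into the backward difference $(U_s^n-U_s^{n-1})/\tau$. For the first, I would apply the estimate above on each restricted horizon $[0,t_M]$, bound $\|U_s^n\|_{L^q}$ by $\|U_s^{k-1}\|_{L^q}+\tau\sum_{j=k}^{n}\|(U_s^j-U_s^{j-1})/\tau\|_{L^q}$ via telescoping, raise to the $p$-th power using the discrete H\"older inequality, and apply a discrete Gronwall inequality to absorb the resulting cumulative integral into the left-hand side. For the second conversion, I would use the invertible relation $\overline\partial_\tau U^n=\sum_{j=0}^{k-1}\sigma_j(U^{n-j}-U^{n-j-1})/\tau$ together with its inverse; this produces only further starting-value contributions that merge with $C_{\mathrm{sv}}$. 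The main obstacle is purely bookkeeping: verifying that the $s$-uniform $R$-bound from Lemma~\ref{RbdRes} propagates unchanged through the elliptic-regularity equivalence, the pullback to $\Ga^0$, the BDF inversion, and the Gronwall step, so that the final constant in \eqref{BDF-MaxLp-ss} depends only on $p$, $q$ and $T$, as claimed.
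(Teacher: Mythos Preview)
Your proposal is correct and follows essentially the same route as the paper: shift to the invertible generator $\calA_q(s)=\Delta_{\Ga(s)}-1$, apply \cite[Theorems~4.1--4.2, Remark~4.3]{KovacsLiLubich2016} with the uniform $R$-bound from Lemma~\ref{RbdRes}, convert between the BDF combination $\overline\partial_\tau U_s^n$ and the backward difference $(U_s^n-U_s^{n-1})/\tau$ (the paper cites \cite[inequality~(3.4)]{Li-2021-IMA} for this step, which you treat somewhat informally as ``inverting'' the $\sigma_j$-relation), and finally absorb the lower-order term $\|(U_s^n)\|_{L^p(L^q)}$ via telescoping, H\"older, and a discrete Gronwall argument. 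The only differences are organisational: the paper first establishes the full estimate for the shifted problem and then perturbs back, whereas you fold these into a single pass; and you are more explicit about the pullback to $\Ga^0$ than the paper, which leaves that change of variable implicit.
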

\begin{proof}
	Since $\calA_q(s)=\Delta_{\Ga(s)}-1$ in Lemma \ref{RbdRes}, we first consider the following problem (with an additional low-order term): 
	\begin{equation}
	\label{BDF:Gamma-ss2}
	\frac{1}{\tau}\sum_{j=0}^k\delta_j  U_s^{n-j}
	-\Delta_{\Ga(s)}U_s^n +U_s^n
	= F_s^n , \qquad \text{on} \ \Ga(s), \quad \text{for} \ k\tau \leq n\tau \leq T ,
	\end{equation}
	For the problem \eqref{BDF:Gamma-ss2}, Lemma~\ref{RbdRes} and \cite[Theorem 4.1 and 4.2, Remark 4.3]{KovacsLiLubich2016} imply, for $N \le T/\tau$,  
	\begin{align}
	\label{BDF-MaxLp-ssa}
	&\ \bigg\|\bigg(\frac{1}{\tau}\sum_{j=0}^k\delta_j  U_s^{n-j}\bigg)_{n=k}^N \bigg\|_{L^{p}(L^q(\Ga^0))}
	+ \|(U_s^n)_{n=k}^N\|_{L^{p}(W^{2,q}(\Ga^0))} \nonumber \\
	\leq &\ C \| (F_s^n)_{n=k}^N\|_{L^{p}(L^q(\Ga^0))} 
	+ C(\tau^{-1}\|(U_s^i)_{i=0}^{k-1}\|_{L^{p}(L^q(\Ga^0))}+ \|(U_s^i)_{i=0}^{k-1}\|_{L^{p}(W^{2,q}(\Ga^0))}) ,
	\end{align}
	In the case $U_s^n=0$ for $0\le n\le k-1$, it is known that the following inequality holds (cf. \cite[inequality (3.4)]{Li-2021-IMA}): 
	\begin{align*}
	&\bigg\|\bigg(\frac{U_s^n-U_s^{n-1}}{\tau}\bigg)_{n=k}^N\bigg\|_{L^{p}(L^q(\Ga^0))}
	\le 
	C\bigg\|\bigg(\frac{1}{\tau}\sum_{j=0}^k\delta_j  U_s^{n-j}\bigg)_{n=k}^N \bigg\|_{L^{p}(L^q(\Ga^0))} .
	\end{align*}
	In the case $U_s^n\neq 0$ for $0\le n\le k-1$ one can modify the above inequality by including the starting values, i.e. 
	\begin{align}\label{Dtau-Dtauk}
	&\bigg\|\bigg(\frac{U_s^n-U_s^{n-1}}{\tau}\bigg)_{n=k}^N \bigg\|_{L^{p}(L^q(\Ga^0))}
	\le 
	C\bigg\|\bigg(\frac{1}{\tau}\sum_{j=0}^k\delta_j  U_s^{n-j}\bigg)_{n=k}^N \bigg\|_{L^{p}(L^q(\Ga^0))}
	\!\! + C\tau^{-1}\|(U_s^n)_{n=0}^{k-1}\|_{L^{p}(L^q(\Ga^0))} . 
	\end{align}
	Inequalities \eqref{BDF-MaxLp-ssa} and \eqref{Dtau-Dtauk} imply that 
	\begin{equation}
	\label{BDF-MaxLp-ss - with extra term}
	\begin{aligned}
	&\ \bigg\|\bigg(\frac{U_s^n-U_s^{n-1}}{\tau}\bigg)_{n=k}^N \bigg\|_{L^{p}(L^q(\Ga^0))}
	+ \|(U_s^n)_{n=k}^N\|_{L^{p}(W^{2,q}(\Ga^0))} \nonumber \\
	\leq &\ C \| (F_s^n)_{n=k}^N\|_{L^{p}(L^q(\Ga^0))} 
	+ C(\tau^{-1}\|(U_s^i)_{i=0}^{k-1}\|_{L^{p}(L^q(\Ga^0))}+ \|(U_s^i)_{i=0}^{k-1}\|_{L^{p}(W^{2,q}(\Ga^0))}) .
	\end{aligned}
	\end{equation}
	This proves \eqref{BDF-MaxLp-ss} for the problem \eqref{BDF:Gamma-ss2} 
	(which has an additional lower order term).

	We now turn to prove \eqref{BDF-MaxLp-ss} for the original problem \eqref{BDF:Gamma-ss}, by removing the lower order term using a perturbation argument.
	To this end, we add the low-order term $U_s^n$ to both sides of \eqref{BDF:Gamma-ss}, 
	\begin{equation}
	\label{BDF:Gamma-ss3}
	\frac{1}{\tau}\sum_{j=0}^k\delta_j  U_s^{n-j}
	-\Delta_{\Ga(s)}U_s^n+U_s^n
	= F_s^n+U_s^n , \qquad \text{on} \ \Ga(s) \quad k \leq n \leq T/\tau ,
	\end{equation}
	and apply the estimate \eqref{BDF-MaxLp-ss - with extra term} to the problem above. We therefore obtain, for $N \leq T/\tau$,
	\begin{align}
	\label{eq:maxreg bound - pre power p}
	&\ \bigg\|\bigg(\frac{U_s^n-U_s^{n-1}}{\tau}\bigg)_{n=0}^N \bigg\|_{L^{p}(L^q(\Ga^0))}
	+ \|(U_s^n)_{n=0}^N\|_{L^{p}(W^{2,q}(\Ga^0))} \nonumber \\
	\leq &\ C \| (U_s^n)_{n=0}^N\|_{L^{p}(L^q(\Ga^0))} 
	+C \| (F_s^n)_{n=k}^N\|_{L^{p}(L^q(\Ga^0))} \nonumber \\
	&\ + C(\tau^{-1}\|(U_s^i)_{i=0}^{k-1}\|_{L^{p}(L^q(\Ga^0))}+ \|(U_s^i)_{i=0}^{k-1}\|_{L^{p}(W^{2,q}(\Ga^0))}) .
	\end{align}
	Using a H\"older inequality and $U_s^{-1}=0$, yields the estimate \begin{align*}
	\|(U_s^n)_{n=0}^N\|_{L^{p}(L^q(\Ga^0))}^p
	= \tau\sum_{n=0}^N \|U_s^n\|_{L^q(\Ga^0)}^p
	= &\ \tau\sum_{n=0}^N \bigg\|\tau \sum_{j=0}^n \frac{U_s^j-U_s^{j-1}}{\tau}\bigg\|_{L^q(\Ga^0)}^p \\
	\leq &\ \tau\sum_{n=0}^N   \bigg(\tau \sum_{j=0}^n1^{p'}\bigg)^{\frac{p}{p'}}\tau \sum_{j=0}^n \bigg\|\frac{U_s^j-U_s^{j-1}}{\tau} \bigg\|_{L^q(\Ga^0)}^p \\
	\leq &\ T^{p-1} \tau\sum_{n=0}^N 
	\bigg\|\bigg(\frac{U_s^j-U_s^{j-1}}{\tau}\bigg)_{j=0}^n \bigg\|_{L^{p}(L^q(\Ga^0))}^p .  
	\end{align*}
	Raising \eqref{eq:maxreg bound - pre power p} to power $p$ yields 
	\begin{align*}
	&\ \bigg\|\bigg(\frac{U_s^n-U_s^{n-1}}{\tau}\bigg)_{n=0}^N \bigg\|_{L^{p}(L^q(\Ga^0))}^p
	+ \|(U_s^n)_{n=0}^N\|_{L^{p}(W^{2,q}(\Ga^0))}^p \\
	\leq &\ C \tau\sum_{n=0}^N 
	\bigg\|\bigg(\frac{U_s^j-U_s^{j-1}}{\tau}\bigg)_{j=0}^n \bigg\|_{L^{p}(L^q(\Ga^0))}^p \\
	&\ +C\big( \| (F_s^n)_{n=k}^N\|_{L^{p}(L^q(\Ga^0))} 
	+ \tau^{-1}\|(U_s^i)_{i=0}^{k-1}\|_{L^{p}(L^q(\Ga^0))}+ \|(U_s^i)_{i=0}^{k-1}\|_{L^{p}(W^{2,q}(\Ga^0))}\big)^p .
	\end{align*}
	By a discrete Gronwall inequality we obtain the estimate \eqref{BDF-MaxLp-ss}. 
\end{proof}

By the change of variable $U^n(y)= U_s^n(X(y,s))$ the equation \eqref{BDF:Gamma-ss} can be equivalently written as 
\begin{align}
\label{BDF:Gamma-s0}
& \frac{1}{\tau} \sum_{j=0}^k\delta_j  a(\cdot,s) U^{n-j} 
- \nb_{\Ga^0}\cdot\big(B(\cdot,s)\nb_{\Ga^0} U^n\big)
= F^n \quad \text{on} \,\,\, \Ga^0 ,\quad k\le n\le T/\tau .
\end{align}
This, together with Lemma~\ref{Lemma:time-independent for BDF}, implies the following result. 
\begin{lemma}
	\label{lemma:MaxLp-s-BDF}
	For any given $s\in[0,T]$, the solutions of \eqref{BDF:Gamma-s0} 
	satisfy the discrete maximal $L^{p}$-regularity estimate for $0\le N\le T/\tau$:
	\begin{align}
	\label{MaxLp-s-BDF}
	&\ \bigg\|\bigg(\frac{U^n-U^{n-1}}{\tau}\bigg)_{n=0}^N \bigg\|_{L^{p}(L^q(\Ga^0))}
	+ \|(U^n)_{n=0}^N\|_{L^{p}(W^{2,q}(\Ga^0))}  \nonumber \\
	\leq &\ C \| (F^n)_{n=k}^N\|_{L^{p}(L^q(\Ga^0))}
	+  C(\tau^{-1}\|(U^i)_{i=0}^{k-1}\|_{L^{p}(L^q(\Ga^0))}+ \|(U^i)_{i=0}^{k-1}\|_{L^{p}(W^{2,q}(\Ga^0))}),
	\end{align}
	where $U^{-1}:=0$ and the term $\| (F_s^n)_{n=k}^N\|_{L^{p}(L^q(\Ga^0))} $ vanishes if $N<k$, and the constant $C>0$ is independent of $N$, $\tau$ and $s\in[0,T]$. 
\end{lemma}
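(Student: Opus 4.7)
The plan is to reduce Lemma~\ref{lemma:MaxLp-s-BDF} to Lemma~\ref{Lemma:time-independent for BDF} via the change of variables $y \mapsto X(y,s)$ from $\Ga^0$ onto $\Ga(s)$, exactly in analogy with how Lemma~\ref{Lemma:time-independent} was used in the proof of Theorem~\ref{theorem:MaxLp - Lagrangian}, but now at the fully discrete level.

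First I would set $U_s^n(x) := U^n(X^{-1}(x,s))$ for $x \in \Ga(s)$, and show that this correspondence converts the frozen-coefficient pulled-back scheme \eqref{BDF:Gamma-s0} into the heat-equation BDF scheme \eqref{BDF:Gamma-ss} on the stationary surface $\Ga(s)$ (modulo dividing by $a(\cdot,s)$ and absorbing the positive factor $a(\cdot,s)$ into the right-hand side, which is harmless by \eqref{aXt}). This is precisely the discrete analogue of the equivalence used in the proof of Lemma~\ref{Lemma:time-independent}: the weak form of \eqref{BDF:Gamma-s0} tested with $\psi(y) = \varphi(X(y,s))$ is exactly the weak form of \eqref{BDF:Gamma-ss} tested with $\varphi$, because only spatial derivatives appear and the coefficient is time-independent. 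Consequently the right-hand side of \eqref{BDF:Gamma-ss} becomes $F_s^n(x) = a(X^{-1}(x,s),s)^{-1} F^n(X^{-1}(x,s))$, matching the notation of Lemma~\ref{Lemma:time-independent for BDF}.

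Second I would invoke Lemma~\ref{Lemma:time-independent for BDF} to obtain the discrete maximal $L^p$-regularity bound \eqref{BDF-MaxLp-ss} for $(U_s^n)$ on $\Ga(s)$. The remaining work is to translate this estimate back to the initial surface $\Ga^0$. Since the flow map $X(\cdot,s) : \Ga^0 \to \Ga(s)$ is a diffeomorphism depending smoothly on $s \in [0,T]$, the composition with $X(\cdot,s)$ and $X^{-1}(\cdot,s)$ induces norm equivalences
\begin{equation*}
c \|V\|_{W^{j,q}(\Ga^0)} \leq \|V \circ X^{-1}(\cdot,s)\|_{W^{j,q}(\Ga(s))} \leq C \|V\|_{W^{j,q}(\Ga^0)}, \qquad j=0,2,\ 1<q<\infty,
\end{equation*}
with constants independent of $s \in [0,T]$ and $V$; this follows from the uniform smoothness assumption on $X$ and the Jacobian bound \eqref{aXt}. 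Applying this norm equivalence term-by-term to both the left-hand side (to the difference quotient and to the $W^{2,q}$ norm of $U_s^n$) and to the right-hand side (to $F_s^n$ and to the starting values) yields \eqref{MaxLp-s-BDF}, with a constant $C$ independent of $s$, $\tau$, and $N$.

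The main point requiring care --- though it is not a serious obstacle --- is the uniform-in-$s$ character of all constants: the $R$-bound used in Lemma~\ref{RbdRes} is uniform in $s$ by the $s$-independent Gaussian estimate \eqref{GKernelE0}, the constant in Lemma~\ref{Lemma:time-independent for BDF} is accordingly uniform in $s$, and the norm equivalence above is uniform in $s$ by compactness of $[0,T]$ and smoothness of the family $X(\cdot,s)$. Putting these pieces together gives \eqref{MaxLp-s-BDF} with a constant depending only on $p$, $q$, $T$, and the flow map.
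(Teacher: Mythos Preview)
Your proposal is correct and follows exactly the approach the paper takes: the paper derives Lemma~\ref{lemma:MaxLp-s-BDF} in one sentence by noting that the change of variable $U^n(y)=U_s^n(X(y,s))$ converts \eqref{BDF:Gamma-ss} into \eqref{BDF:Gamma-s0}, and then invoking Lemma~\ref{Lemma:time-independent for BDF}. Your write-up simply makes explicit the norm equivalences under $X(\cdot,s)$ and the uniform-in-$s$ dependence of all constants, which the paper leaves implicit.
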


{\bf Part II: Extension to evolving surfaces.}

Lemma \ref{lemma:MaxLp-s-BDF} requires the coefficients of \eqref{BDF:Gamma-s0} to be frozen at some fixed time $s\in[0,T]$. We therefore need to extend Lemma~\ref{lemma:MaxLp-s-BDF} to equation \eqref{eq:BDF - pulled back}, which has time-dependent coefficients. To this end, we use the perturbation arguments analogously as in the proof of Theorem \ref{theorem:MaxLp - Lagrangian} but in the time-discrete setting.

We rewrite \eqref{eq:BDF - pulled back} into the following form, with frozen coefficients on the left-hand side and perturbation terms on the right-hand side, for brevity omitting the spatial arguments in the coefficients, we obtain  
\begin{align}
\label{eq:BDF for perturbed linear PDE}
&\ \frac{1}{\tau}\sum_{j=0}^k\delta_j  a(t_N) U^{n-j} 
-\nb_{\Ga^0}\cdot\Big(B(t_N)\nb_{\Ga^0} U^n\Big) \nonumber \\
&=  a(t_n) F^n
+ \frac{1}{\tau}\sum_{j=0}^k \delta_j   (a(t_N) -a(t_{n-j}) )U^{n-j}      -\nb_{\Ga^0}\cdot\Big((B(t_N)-B(t_n))\nb_{\Ga^0} U^n\Big) \nonumber \\
&=: a(t_n) F^n + G^n + H^n .
\end{align}
Applying Lemma \ref{lemma:MaxLp-s-BDF} to the equation above, and using the bound \eqref{aXt} for $a(t_n)$, yields 
\begin{equation}
\label{eq:perturbed discr max reg estimate}
\begin{aligned}
&\ \bigg\|\bigg(\frac{U^n-U^{n-1}}{\tau}\bigg)_{n=0}^N \bigg\|_{L^{p}(L^q(\Ga^0))}
+ \|(U^n)_{n=0}^N\|_{L^{p}(W^{2,q}(\Ga^0))}  \\
\leq &\ C \|(F^n)_{n=k}^N\|_{L^{p}(L^q(\Ga^0))} + C\|(G^n)_{n=k}^N\|_{L^{p}(L^q(\Ga^0))} 
+C \|(H^n)_{n=k}^N\|_{L^{p}(L^q(\Ga^0))} \\
&\
+C(\tau^{-1}\|(U^i)_{i=0}^{k-1}\|_{L^{p}(L^q)}+ \|(U^i)_{i=0}^{k-1}\|_{L^{p}(W^{2,q}(\Ga^0))}) .
\end{aligned}
\end{equation}
The second and third terms on the right-hand side are bounded further separately. For the second term we obtain 
\begin{align*}
\|G^n\|_{L^q(\Ga^0)} 
&\leq 
\bigg\| \frac{1}{\tau}\sum_{j=0}^k\delta_j   (a(t_N) -a(t_{n-j}) )U^{n-j} \bigg\|_{L^q(\Ga^0)} \\
&=\bigg\|(a(t_N) -a(t_{n}) ) \frac{1}{\tau}\sum_{j=0}^k\delta_j   U^{n-j} 
+ \sum_{j=0}^k\delta_j   \frac{a(t_n) -a(t_{n-j})}{\tau} U^{n-j} \bigg\|_{L^q(\Ga^0)} \\
&\leq \big\| (a(t_N) -a(t_{n}) ) \overline\partial_\tau U^n \big\|_{L^q(\Ga^0)} 
+ \bigg\|\sum_{j=0}^k\delta_j   \frac{a(t_n) -a(t_{n-j})}{\tau} U^{n-j} \bigg\|_{L^q(\Ga^0)} \\
&\leq C|t_N-t_n| \big\| \overline\partial_\tau U^n \big\|_{L^q(\Ga^0)} 
+ C\sum_{j=0}^k\|U^{n-j} \|_{L^q(\Ga^0)} ,
\end{align*}
where we have used the estimate $\big|\frac{a(\cdot,t_n) -a(\cdot,t_{n-j})}{\tau}\big| \leq C$, following from the temporal smoothness of \eqref{def-ayt},
to derive the last inequality. 

\noindent For the third term similar estimates yield
\begin{align*}
\|H^n\|_{L^q(\Ga^0)} 
&\leq C|t_N-t_n|\|U^{n}\|_{W^{2,q}(\Ga^0)} + C\sum_{j=0}^k\|U^{n-j} \|_{L^q(\Ga^0)} .
\end{align*}
Substituting the estimates for $G^n$ and $H^n$ into \eqref{eq:perturbed discr max reg estimate} yields
\begin{align}
\label{eq:pre L inequality}
&\ \bigg\|\bigg(\frac{U^n-U^{n-1}}{\tau}\bigg)_{n=0}^N \bigg\|_{L^{p}(L^q(\Ga^0))}
+ \|(U^n)_{n=0}^N\|_{L^{p}(W^{2,q}(\Ga^0))} 
\nonumber \\
\leq &\ C \|(F^n)_{n=k}^N\|_{L^{p}(L^q(\Ga^0))} 
+ C\sum_{j=0}^k \|(U^{n-j})_{n=k}^N\|_{L^{p}(L^q(\Ga^0))} 
\nonumber \\
&\ + C\big(\|(|t_N-t_n| \overline\partial_\tau U^n)_{n=k}^N\|_{L^{p}(L^q(\Ga^0))} 
+ \|(|t_N-t_n|U^n)_{n=k}^N\|_{L^{p}(W^{2,q}(\Ga^0))}\big)
\nonumber \\
&\ + C(\tau^{-1}\|(U^i)_{i=0}^{k-1}\|_{L^{p}(L^q(\Ga^0))}+ \|(U^i)_{i=0}^{k-1}\|_{L^{p}(W^{2,q}(\Ga^0))})
\nonumber \\
\leq &\ C \|(F^n)_{n=k}^N\|_{L^{p}(L^q(\Ga^0))} 
+ C \|(U^n)_{n=0}^N\|_{L^{p}(L^q(\Ga^0))} 
\nonumber \\
&\ + C\big(\|(|t_N-t_n| \overline\partial_\tau U^n)_{n=k}^N\|_{L^{p}(L^q(\Ga^0))} 
+ \|(|t_N-t_n|U^n)_{n=k}^N\|_{L^{p}(W^{2,q}(\Ga^0))}\big) 
\nonumber \\
&\ + C(\tau^{-1}\|(U^i)_{i=0}^{k-1}\|_{L^{p}(L^q)}+ \|(U^i)_{i=0}^{k-1}\|_{L^{p}(W^{2,q}(\Ga^0))}) ,
\end{align}
which holds for $0\le N \le T/ \tau$. 

Analogously to the proof of Theorem~\ref{theorem:MaxLp - Lagrangian}, we define 
\begin{equation}
\label{Def-Lm}
\begin{aligned}
L^m =&\ \bigg\|\bigg(\frac{U^n-U^{n-1}}{\tau}\bigg)_{n=0}^N \bigg\|_{L^{p}(L^q(\Ga^0))}^p
+ \|(U^n)_{n=0}^N\|_{L^{p}(W^{2,q}(\Ga^0))}^p \\
= &\ \tau \sum_{n=0}^N \bigg( \bigg\|\frac{U^n-U^{n-1}}{\tau}\bigg\|_{L^q(\Ga^0)}^p + \|U^n\|_{W^{2,q}(\Ga^0)}^p\bigg) ,
\end{aligned}
\end{equation}
with $L^{-1} = 0$. 
Then \eqref{eq:pre L inequality} and summation by parts imply the following estimate for $L^n$: 
\begin{align*}
L^N 
\leq &\  C \|(F^n)_{n=k}^N\|_{L^{p}(L^q(\Ga^0))}^p + C \|(U^n)_{n=k}^N\|_{L^{p}(L^q(\Ga^0))}^p \\
&\ + C\|(|t_n-t_N|\overline\partial_\tau U^n)_{n=0}^N\|_{L^{p}(L^q(\Ga^0))}^p + C \|(|t_n-t_N|U^n)_{n=k}^N\|_{L^{p}(W^{2,q}(\Ga^0))}^p \\
&\ + C (\tau^{-1}\|(U^i)_{i=0}^{k-1}\|_{L^{p}(L^q(\Ga^0))}+ \|(U^i)_{i=0}^{k-1}\|_{L^{p}(W^{2,q}(\Ga^0))}) \\
\leq &\ C \|(F^n)_{n=k}^N\|_{L^{p}(L^q(\Ga^0))}^p  + C\|(U^n)_{n=0}^N\|_{L^{p}(L^q(\Ga^0))}^p \\
&\ + C \sum_{n=k}^N |t_n-t_N|^p \big(\|\overline\partial_\tau U^n\|_{L^q(\Ga^0)}^p + \|U^n\|_{W^{2,q}(\Ga^0)}^p\big) \\
&\ + C(\tau^{-1}\|(U^i)_{i=0}^{k-1}\|_{L^{p}(L^q(\Ga^0))}+ \|(U^i)_{i=0}^{k-1}\|_{L^{p}(W^{2,q}(\Ga^0))}) \\
\leq &\ C \|(F^n)_{n=k}^N\|_{L^{p}(L^q)}^p  + C\|(U^n)_{n=0}^N\|_{L^{p}(L^q(\Ga^0))}^p \\
&\ + C \sum_{n=0}^N |t_n-t_N|^p \bigg(\bigg\|\frac{U^n-U^{n-1}}{\tau}\bigg\|_{L^q(\Ga^0)}^p + \|U^n\|_{W^{2,q}(\Ga^0)}^p\bigg) 
\qquad\mbox{(using \eqref{BDFUn-dtUn} here)}\\
&\ + C(\tau^{-1}\|(U^i)_{i=0}^{k-1}\|_{L^{p}(L^q(\Ga^0))} + \|(U^i)_{i=0}^{k-1}\|_{L^{p}(W^{2,q}(\Ga^0))}) \\
= &\ C \|(F^n)_{n=k}^N\|_{L^{p}(L^q(\Ga^0))}^p + C \|(U^n)_{n=0}^N\|_{L^{p}(L^q(\Ga^0))}^p \\
&\ + C \sum_{n=0}^N |t_n-t_N|^p \big(L^n - L^{n-1}\big)	\qquad \mbox{($L^{-1}:=0$)} \\
&\ + C(\tau^{-1}\|(U^i)_{i=0}^{k-1}\|_{L^{p}(L^q(\Ga^0))} + \|(U^i)_{i=0}^{k-1}\|_{L^{p}(W^{2,q}(\Ga^0))}) \\
= &\ C \|(F^n)_{n=k}^N\|_{L^{p}(L^q(\Ga^0))}^p + C \|(U^n)_{n=0}^N\|_{L^{p}(L^q(\Ga^0))}^p \\
&\ + C\sum_{n=0}^N  \big(|t_{n-1}-t_N|^p - |t_{n}-t_N|^p\big) L^{n-1} \qquad \mbox{(summation by parts)} \\
&\ + C(\tau^{-1}\|(U^i)_{i=0}^{k-1}\|_{L^{p}(L^q(\Ga^0))}+ \|(U^i)_{i=0}^{k-1}\|_{L^{p}(W^{2,q}(\Ga^0))}) . 
\end{align*}
By the bound 
$$
|t_{n-1}-t_N|^p - |t_n-t_N|^p
= \tau \sum_{i=1}^p |t_n-t_N|^{p-i} |t_{n-1}-t_N|^{i-1} 
\leq \tau p T^{p-1} ,
$$
the last estimate is further bounded by
\begin{align*}
L^N
\leq &\ C \|(F^n)_{n=k}^N\|_{L^{p}(L^q(\Ga^0))}^p + C \|(U^n)_{n=0}^N\|_{L^{p}(L^q(\Ga^0))}^p \\
&\ + C(\tau^{-1}\|(U^i)_{i=0}^{k-1}\|_{L^{p}(L^q(\Ga^0))} +  \|(U^i)_{i=0}^{k-1}\|_{L^{p}(W^{2,q}(\Ga^0))}) + C \tau \sum_{n=0}^{N} L^{n-1} .
\end{align*}
Applying discrete Gronwall inequality, we obtain
\begin{align*}
L^N \leq &\ C \|(F^n)_{n=k}^N\|_{L^{p}(L^q(\Ga^0))}^p  + C\|(U^n)_{n=0}^N\|_{L^{p}(L^q(\Ga^0))}^p \\
&\ + C (\tau^{-1}\|(U^i)_{i=0}^{k-1}\|_{L^{p}(L^q(\Ga^0))}+ \|(U^i)_{i=0}^{k-1}\|_{L^{p}(W^{2,q}(\Ga^0))}) .
\end{align*}
Then substituting \eqref{Def-Lm} into the last inequality yields 
\begin{equation}
\label{eq:with extra term}
\begin{aligned}
&\ \bigg\|\bigg(\frac{U^n-U^{n-1}}{\tau}\bigg)_{n=0}^N \bigg\|_{L^{p}(L^q(\Ga^0))}
+ \|(U^n)_{n=0}^N\|_{L^{p}(W^{2,q}(\Ga^0))} \\
\leq &\ C \|(F^n)_{n=k}^N\|_{L^{p}(L^q(\Ga^0))}  + C\|(U^n)_{n=0}^N\|_{L^{p}(L^q(\Ga^0))} \\
&\ + C(\tau^{-1}\|(U^i)_{i=0}^{k-1}\|_{L^{p}(L^q(\Ga^0))} + \|(U^i)_{i=0}^{k-1}\|_{L^{p}(W^{2,q}(\Ga^0))}) ,
\end{aligned}
\end{equation}
which holds for $N \leq T/ \tau$, with a constant depending on $p$ and $T$, but independent of $N$ or $\tau$. 

Similarly as for \eqref{eq:maxreg bound - pre power p}, the low-order term $\|(U^n)_{n=0}^N\|_{L^{p}(L^q)}$ on the right-hand side is removed by a discrete Gronwall argument. 
This completes the proof of Theorem \ref{theorem:MaxLp - LagrangianBDF}.
\endproof

%

\section{Proof of Theorem \ref{theorem: convergence of BDF time discr}}
\label{Sec:TD-nonlinear}

In this section, we show stability and convergence of linearly implicit BDF methods for the nonlinear evolving surface PDE problem \eqref{eq:PDE} by using the discrete maximal $L^p$-regularity of BDF methods established in Theorem~\ref{theorem:MaxLp - LagrangianBDF}, 
in combination with a mathematical induction on the boundedness of numerical solutions in the $W^{1,\infty}$ norm, which will be derived by using the following discrete Sobolev embedding inequality, the time-discrete version of Lemma~\ref{lemma:space-time Sobolev inequality}. 

\begin{lemma}[Discrete inhomogeneous Sobolev embedding]
	\label{lemma:discrete space-time Sobolev inequality}
	{\it
		Let $1 < p,q < \infty$, satisfying $2/p+m/q<1$ with $\Ga^0 \subset \R^{m+1}$, and let $(w^n)_{n=0}^\infty \subset W^{2,q} (\Ga^0)$ be a sequence of functions (setting $w^{-1}=0$). Then
		\begin{align}\label{ineq-discr-embed}
		&\max_{0\le n\le N} \|w^n\|_{L^\infty(W^{1,\infty} (\Ga^0))}
		\le C\bigg(\bigg\|\bigg(\frac{w^n-w^{n-1}}{\tau}\bigg)_{n=0}^N\bigg\|_{L^p(L^q (\Ga^0))} + \|(w^n)_{n=0}^N\|_{L^p(W^{2,q} (\Ga^0))}\bigg) ,
		\end{align}
		where the constant $C>0$ is independent of $N \geq 0$.
	}
\end{lemma}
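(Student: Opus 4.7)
The plan is to reduce the statement to its continuous counterpart in Lemma~\ref{lemma:space-time Sobolev inequality} (specialized to the stationary surface $\Ga^0$, so that the material derivative $\mat$ reduces to the ordinary time derivative $\partial_t$) via a piecewise linear in time interpolation of the sequence $(w^n)$. Concretely, define $\widetilde{w}:[-\tau,t_N]\to W^{2,q}(\Ga^0)$ by setting $\widetilde{w}(t_n):=w^n$ for $-1\le n\le N$, using the convention $w^{-1}=0$, and extending linearly on each subinterval $[t_{n-1},t_n]$. By construction $\widetilde{w}(-\tau)=0$ and $\partial_t\widetilde{w}(t)=(w^n-w^{n-1})/\tau$ on each open interval $(t_{n-1},t_n)$.

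The next step is to compare the Bochner norms of $\widetilde{w}$ with the discrete norms of $(w^n)$. Since $\partial_t\widetilde{w}$ is constant in $t$ on each subinterval,
\begin{equation*}
\|\partial_t\widetilde{w}\|_{L^p(-\tau,t_N;L^q(\Ga^0))}^p
=\tau\sum_{n=0}^N\Big\|\tfrac{w^n-w^{n-1}}{\tau}\Big\|_{L^q(\Ga^0)}^p,
\end{equation*}
while writing $\widetilde{w}(t)$ as a convex combination of $w^{n-1}$ and $w^n$ on $(t_{n-1},t_n)$ and using the triangle inequality yields
\begin{equation*}
\|\widetilde{w}\|_{L^p(-\tau,t_N;W^{2,q}(\Ga^0))}^p
\le C\,\tau\sum_{n=0}^N\|w^n\|_{W^{2,q}(\Ga^0)}^p.
\end{equation*}

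Then I would apply Lemma~\ref{lemma:space-time Sobolev inequality} on the interval $(-\tau,t_N)$ to the interpolant $\widetilde{w}$, which is admissible because $\widetilde{w}(-\tau)=0$ and $2/p+m/q<1$. This gives
\begin{equation*}
\|\widetilde{w}\|_{L^\infty(-\tau,t_N;W^{1,\infty}(\Ga^0))}
\le c_{p,q}\bigl(\|\partial_t\widetilde{w}\|_{L^p(-\tau,t_N;L^q(\Ga^0))}+\|\widetilde{w}\|_{L^p(-\tau,t_N;W^{2,q}(\Ga^0))}\bigr).
\end{equation*}
Since $w^n=\widetilde{w}(t_n)$ for each $n$, the left-hand side dominates $\max_{0\le n\le N}\|w^n\|_{W^{1,\infty}(\Ga^0)}$, and combining this with the previous two displays produces the desired estimate \eqref{ineq-discr-embed}.

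The main technical point I would anticipate is ensuring that the constant inherited from the continuous embedding is independent of $N$ and $\tau$. The embedding constant in Lemma~\ref{lemma:space-time Sobolev inequality} depends only on $\Ga^0$, the length of the time interval (here $t_N+\tau\le T+\tau$, which is uniformly bounded once one assumes $\tau\le T$), and the exponents $p,q$, so this dependence is harmless and can be absorbed into the final constant $C$.
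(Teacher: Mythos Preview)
Your argument is correct, but it follows a genuinely different route from the paper's proof. The paper does not interpolate in time and invoke the continuous Lemma~\ref{lemma:space-time Sobolev inequality}; instead it reduces the surface statement to the already-known \emph{discrete} planar result \cite[Lemma~3.6]{Cai-Li-Lin-Sun-2019} via local charts. Concretely, the paper chooses a finite cover $\{\Omega_j\}$ of $\Ga^0$ with charts $\varphi_j$ and a subordinate partition of unity $\{\alpha_j\}$, uses the norm equivalence $\|w\|_{W^{k,p}(\Ga^0)}\sim\sum_j\|(w\alpha_j)\circ\varphi_j\|_{W^{k,p}(\Omega_j)}$, applies the flat discrete embedding on each $\Omega_j$, and reassembles.

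In short, the paper performs a \emph{spatial} reduction (surface $\to$ flat, staying time-discrete), while you perform a \emph{temporal} reduction (discrete $\to$ continuous, staying on the surface). Your approach has the advantage of being self-contained within the paper: it only uses Lemma~\ref{lemma:space-time Sobolev inequality}, which is already established, rather than importing an external discrete result. The paper's approach has the merit of making the passage from flat to curved domains explicit and reusable. Your remark about the $T$-dependence of the embedding constant is the right thing to check; since the interval length $t_N+\tau$ is bounded by $T+\tau\le 2T$ (for $\tau\le T$), and the constant in Lemma~\ref{lemma:space-time Sobolev inequality} is monotone in the interval length, this is harmless. One minor point worth stating explicitly: the $L^\infty$-in-time norm on the left is an essential supremum, so to extract the nodal values you are implicitly using that $\widetilde w$ is continuous into $W^{1,\infty}(\Ga^0)$, which follows from piecewise linearity and the embedding $W^{2,q}(\Ga^0)\hookrightarrow W^{1,\infty}(\Ga^0)$ (valid since $m/q<1$).
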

\begin{proof}
	In the case of planar domain, Lemma \ref{lemma:discrete space-time Sobolev inequality} was proved in \cite[Lemma 3.6]{Cai-Li-Lin-Sun-2019} as an extension of the continuous version of inhomogeneous Sobolev embedding in \cite[Lemma~3.1]{KunstmannLiLubich2016}. On the smooth surface $\Ga^0$, {\color{black}
as shown in \cite[p. 9]{Hebey-2000}, there exist $(\Omega_j,\varphi_j,\alpha_j)$, $j=1,\dots,m$, 
such that 	

1. $\{\Omega_j: j=1,\dots,m\}$ is an open coverning of $\Gamma^0$; 

2. $\varphi_j:\Omega_j\rightarrow M$ is a local chart for $j=1,\dots,m$; 

3. $\{\alpha_j: j=1,\dots,m\}$ is a partition of unity such that ${\rm supp}(\alpha_j)\subset \Omega_j$ for $j=1,\dots,m$.  

Then the Sobolev norm $\|w\|_{W^{k,p}(\Gamma^0)}$ is equivalent to 
$\sum_{j=1}^m \|(w\alpha_j)\circ\varphi_j\|_{W^{k,p}(\Omega_j)} $ for $1\le p\le \infty$. 
As a result, for any fixed $1<p,q<\infty$ such that $2/p+m/q<1$,  
\begin{align*}
&\max_{0\le n\le N} \|w^n\|_{L^\infty(W^{1,\infty} (\Ga^0))}^p \\ 
&\sim \sum_{j=1}^m \|(w^n\alpha_j)\circ\varphi_j\|_{W^{1,\infty} (\Omega_j)}^p \\ 
&\le C\sum_{j=1}^m\bigg(\bigg\|\bigg(\frac{(w^n\alpha_j)\circ\varphi_j-(w^{n-1}\alpha_j)\circ\varphi_j}{\tau}\bigg)_{n=0}^N\bigg\|_{L^p(L^q (\Omega_j))}^p + \|((w^n\alpha_j)\circ\varphi_j)_{n=0}^N\|_{L^p(W^{2,q} (\Omega_j))}^p \bigg) \\
&\le C\sum_{j=1}^m\sum_{n=0}^N\tau\bigg\|\Big(\frac{(w^n-w^{n-1}}{\tau}\alpha_j\Big)\circ\varphi_j\bigg\|_{L^q (\Omega_j)}^p + 
C\sum_{j=1}^m\sum_{n=0}^N\tau \|(w^n\alpha_j)\circ\varphi_j\|_{L^p(W^{2,q} (\Omega_j))}^p  \\
&\le C \sum_{n=0}^N\tau \bigg(\sum_{j=1}^m\bigg\|\Big(\frac{(w^n-w^{n-1}}{\tau}\alpha_j\Big)\circ\varphi_j\bigg\|_{L^q (\Omega_j)}\bigg)^p + 
C \sum_{n=0}^N\tau \bigg(\sum_{j=1}^m \|(w^n\alpha_j)\circ\varphi_j\|_{L^p(W^{2,q} (\Omega_j))}\bigg)^p \\
&\le C \sum_{n=0}^N\tau\bigg\|\frac{(w^n-w^{n-1}}{\tau}\bigg\|_{L^q (\Gamma^0)}^p + 
C \sum_{n=0}^N\tau \|w^n\|_{L^p(W^{2,q} (\Gamma^0))}^p \\
&\le C\bigg(\bigg\|\bigg(\frac{w^n-w^{n-1}}{\tau}\bigg)_{n=0}^N\bigg\|_{L^p(L^q (\Ga^0))} + \|(w^n)_{n=0}^N\|_{L^p(W^{2,q} (\Ga^0))}\bigg) ^p .
\end{align*}
This proves the result of Lemma \ref{lemma:discrete space-time Sobolev inequality}. 
}
\end{proof}

\subsection{Stability}

Let us first formulate the pull-back of the nonlinear problem \eqref{eq:PDE} onto the initial surface $\Ga^0$ similarly as \eqref{PDE:Gamma}, which yields the following problem:
\begin{align}
\label{PDE:Gamma - nonlinear}
\left\{
\begin{aligned}
\diff \Big(a(\cdot,t) U(\cdot,t) \Big) -\nb_{\Ga^0}\cdot\Big(B(\cdot,t)\nb_{\Ga^0} U(\cdot,t)\Big) =&\  a(\cdot,t) f\big(U(\cdot,t),K(\cdot,t) \nb_{\Ga^0} U(\cdot,t)\big) , \\ 
U(\cdot,0)=&\ u^0 ,
\end{aligned}
\right. 
\end{align}
where the matrix $K(y,t)$ is introduced in \eqref{Def-K-operator}. 
The linearly implicit BDF methods for \eqref{PDE:Gamma - nonlinear} reads 
\begin{equation}
\label{eq:BDF for nonlin}
\frac{1}{\tau} \sum_{j=0}^k \delta_j a^{n-j} U^{n-j} - \nb_{\Ga^0} \cdot \big( B^n \nb_{\Ga^0} U^n \big) = a^n f(\hat U^n, K^n \nb_{\Ga^0} \hat U^n) ,
\end{equation}
where we employed the notations 
$$
a^n = a(\cdot,t_n),\quad B^n = B(\cdot,t_n),\quad K^n = K(\cdot,t_n) \quad\mbox{and}\quad
U^n=u(X(\cdot,t_n),t_n), 
$$
and $\hat U^n = \sum^{k-1}_{j=0}\gamma_j U_{n-j-1} $. 

The exact solution $U_*^n = U(\cdot,t_n)$ of \eqref{PDE:Gamma - nonlinear} satisfies \eqref{eq:BDF for nonlin} up to a defect (consistency error) $D^n$, i.e.
\begin{equation}
\label{eq:perturbed nonlin BDF}
\frac{1}{\tau} \sum_{j=0}^k \delta_j a^{n-j} U_*^{n-j} - \nb_{\Ga^0} \cdot \big( B^n \nb_{\Ga^0} U_*^n \big) = a^n f(\widehat{U}_*^n, K^n \nb_{\Ga^0} \widehat{U}_*^n) + D^n \qquad n \geq k.
\end{equation}
We denote the errors between the numerical solution and the exact solution by $E^n = U^n - U_*^n$.
Upon subtracting \eqref{eq:perturbed nonlin BDF} from \eqref{eq:BDF for nonlin}, we obtain the following error equation for $n \geq k$:
\begin{equation}
\label{eq:nonlin error eq}
\frac{1}{\tau} \sum_{j=0}^k \delta_j a^{n-j} E^{n-j} - \nb_{\Ga^0} \cdot \big( B^n \nb_{\Ga^0} E^n \big) = a^n \Big(f(\hat u^n, K^n \nb_{\Ga^0} \hat u^n) - f(\widehat{U}_*^n, K^n \nb_{\Ga^0} \widehat{U}_*^n)\Big) - D^n .
\end{equation}

Using the discrete maximal parabolic $L^p$-regularity result in Theorem~\ref{theorem:MaxLp - LagrangianBDF}, we now prove that the error $E^n$ satisfies the following stability result.
\begin{proposition}[Stability]
	\label{proposition: stability for nonlinear - time discrete}
	Let $1<p,q<\infty$ be numbers satisfying $2/p+m/q<1$. There exist \bbk sufficiently small \ebk positive constants $\beta_0$ and $\tau_0$ (independent of $\tau$, but may depend on $T$), such that if $\tau\le \tau_0$ and the starting values and the defects satisfy the bounds 
	\begin{align}
	\tau^{-1} \| (U^i - U_*^i)_{i=0}^{k-1} \|_{L^{p}(L^q(\Ga^0))} +  \| (U^i - U_*^i)_{i=0}^{k-1} \|_{L^{p}(W^{2,q}(\Ga^0))}
	\leq &\ \beta_0 , \label{eq:assumed initial abound} \\
	\text{ and } \qquad \ \| (D^n)_{n=k}^N \|_{L^p(L^q(\Ga^0))} \leq &\ \beta_0 ,
	\label{eq:assumed defect bound}
	\end{align}
	Then  the error $E^n = U^n - U_*^n$ between the solutions of \eqref{eq:BDF for nonlin} and \eqref{eq:perturbed nonlin BDF} {\rm(}with $E^{-1}=0)$ satisfies the following stability estimates for $ N \leq T/\tau$: 
	\begin{align}
	\label{eq:maxreg stability bound for nonlinear - BDF}
	&\ \bigg\| \bigg(\frac{E^n-E^{n-1}}{\tau}\bigg)_{n=0}^N \bigg\|_{L^p(L^q(\Ga^0))} + \| (E^n)_{n=0}^N \|_{L^p(W^{2,q}(\Ga^0))} \nonumber \\
	&\qquad \leq C \| (D^n)_{n=k}^N \|_{L^p(L^q(\Ga^0))} + C (\tau^{-1}\|(E^i)_{i=0}^{k-1}\|_{L^{p}(L^q(\Ga^0))} + \|(E^i)_{i=0}^{k-1}\|_{L^{p}(W^{2,q}(\Ga^0))}) , \\[10pt]
	\label{eq:maxreg stability bound for nonlinear - BDF - infty version}
	&\ \max_{0\le n\le N} \| E^n \|_{W^{1,\infty}(\Ga^0)} \nonumber \\
	&\qquad \leq C \| (D^n)_{n=k}^N \|_{L^p(L^q(\Ga^0))} + C (\tau^{-1}\|(E^i)_{i=0}^{k-1}\|_{L^{p}(L^q(\Ga^0))} + \|(E^i)_{i=0}^{k-1}\|_{L^{p}(W^{2,q}(\Ga^0))}) .
	\end{align}
	The constants $\beta_0$, $\tau_0$ and $C$ are independent of $\tau$ and $N$, but may depend on the exact solution $U_*$ and $T$. 
\end{proposition}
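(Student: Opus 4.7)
The plan is to prove \eqref{eq:maxreg stability bound for nonlinear - BDF} and \eqref{eq:maxreg stability bound for nonlinear - BDF - infty version} by induction on $N \ge k-1$, combining Theorem~\ref{theorem:MaxLp - LagrangianBDF} (discrete maximal $L^p$-regularity) with Lemma~\ref{lemma:discrete space-time Sobolev inequality} (discrete inhomogeneous Sobolev embedding). I would fix a radius $R>0$ (say $R=1$) and take as induction hypothesis $\max_{0\le n\le N-1}\|E^n\|_{W^{1,\infty}(\Ga^0)}\le R$. The base case $N=k-1$ is immediate from assumption~\eqref{eq:assumed initial abound} via Lemma~\ref{lemma:discrete space-time Sobolev inequality} when $\beta_0$ is small enough. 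Since $U_*$ is smooth and uniformly bounded in $W^{1,\infty}(\Ga^0)$, the inductive bound on $E^{n-1},\dots,E^{n-k}$ keeps $\hat U^n$ and $\widehat U_*^n$ in a fixed bounded subset of $W^{1,\infty}(\Ga^0)$ for all $n\le N$; the local Lipschitz continuity of $f$ then supplies a Lipschitz constant $L=L(R,U_*)$ that is independent of $\tau$ and $N$.

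With this Lipschitz constant in hand, the source term $G^n$ on the right-hand side of \eqref{eq:nonlin error eq} satisfies, for $k\le n\le N$,
\[
\|G^n\|_{L^q(\Ga^0)} \le L\bigl(\|\hat E^n\|_{L^q(\Ga^0)} + \|\nabla_{\Ga^0}\hat E^n\|_{L^q(\Ga^0)}\bigr) + \|D^n\|_{L^q(\Ga^0)},
\]
where $\hat E^n=\sum_{j=0}^{k-1}\gamma_j E^{n-j-1}$, using \eqref{aXt} and the smoothness of $K^n$ from \eqref{Def-K-operator}. Applying Theorem~\ref{theorem:MaxLp - LagrangianBDF} to \eqref{eq:nonlin error eq} then bounds the left-hand side of \eqref{eq:maxreg stability bound for nonlinear - BDF} by $C\|(G^n)_{n=k}^N\|_{L^p(L^q(\Ga^0))}$ plus the starting-value contributions already appearing on the right of \eqref{eq:maxreg stability bound for nonlinear - BDF}. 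The gradient contribution in $G^n$ would be treated by the interpolation inequality $\|w\|_{W^{1,q}(\Ga^0)}\le \epsilon\|w\|_{W^{2,q}(\Ga^0)}+C_\epsilon\|w\|_{L^q(\Ga^0)}$: for $\epsilon$ small the $W^{2,q}$ contribution is absorbed into the left-hand side, while the $L^q$ contribution (a shifted combination of past errors) is controlled, exactly as in the proof of Theorem~\ref{theorem:MaxLp - LagrangianBDF}, by the discrete H\"older bound $\|(E^n)_{n=0}^N\|_{L^p(L^q(\Ga^0))}^p \le T^{p-1}\tau\sum_{n=0}^N\|((E^j-E^{j-1})/\tau)_{j=0}^n\|_{L^p(L^q(\Ga^0))}^p$ followed by a discrete Gronwall inequality. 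This yields \eqref{eq:maxreg stability bound for nonlinear - BDF}, and \eqref{eq:maxreg stability bound for nonlinear - BDF - infty version} then follows directly by applying Lemma~\ref{lemma:discrete space-time Sobolev inequality} to the left-hand side of \eqref{eq:maxreg stability bound for nonlinear - BDF}.

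To close the induction I would observe that, under \eqref{eq:assumed initial abound}--\eqref{eq:assumed defect bound}, the right-hand side of \eqref{eq:maxreg stability bound for nonlinear - BDF - infty version} is bounded by $2C\beta_0$. Choosing $\beta_0$ small enough (depending on $R$ through the Lipschitz constant $L$) so that $2C\beta_0<R$ forces $\|E^N\|_{W^{1,\infty}(\Ga^0)}<R$ and completes the inductive step. The main technical obstacle I anticipate is the handling of the gradient dependence of $f$: the Lipschitz estimate introduces a $\|\nabla_{\Ga^0}\hat E^n\|_{L^q}$ term whose $L^p$-in-time norm is not of the same order as either of the quantities that can be closed directly (the $L^p(W^{2,q})$ norm controlled by maximal regularity, or the $L^p(L^q)$ norm propagated by Gronwall). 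The interpolation argument above, coupled with the fact that $\hat E^n$ depends only on past errors so that the induction hypothesis already controls the Lipschitz constant, is what keeps $C$ independent of $\tau$ and $N$ and allows the induction to close uniformly for all $N\le T/\tau$.
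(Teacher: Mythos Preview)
Your proposal is correct and follows essentially the same approach as the paper: the paper phrases the induction as a ``maximal $M$'' contradiction argument (let $M$ be the largest index such that $\|U^{n-1}\|_{W^{1,\infty}}\le \|U_*\|_{C([0,T];W^{1,\infty})}+1$ for $n\le M$, then show $M=[T/\tau]$), and tracks boundedness of the numerical solution $U^n$ rather than of the error $E^n$, but these are equivalent formulations of the same bootstrap. The core steps---apply Theorem~\ref{theorem:MaxLp - LagrangianBDF} to \eqref{eq:nonlin error eq}, use local Lipschitz continuity of $f$ to bound the nonlinear difference by $C\|(\widehat E^n)\|_{L^p(W^{1,q})}$, interpolate $W^{1,q}$ between $W^{2,q}$ and $L^q$ to absorb the high-order part and Gronwall away the low-order part, then invoke Lemma~\ref{lemma:discrete space-time Sobolev inequality} to close the $W^{1,\infty}$ bound---are exactly what the paper does.
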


\begin{proof}
	Similarly as in \cite{AkrivisLiLubich2017,KunstmannLiLubich2016}, let us assume that $M \leq T/\tau$ is the maximal integer such that the numerical solution satisfies the estimate
	\begin{equation}
	\label{eq:assumed bound}
	\begin{aligned}
	\| U^{n-1} \|_{W^{1,\infty}} 		\leq &\ \| U_* \|_{C([0,T];W^{1,\infty})} + 1
	\end{aligned}
	\quadfor n \leq M .
	\end{equation}
	\bbk Such an $M$ indeed exists since \eqref{eq:assumed bound} holds at least for the initial values (with $M=k$ for sufficiently small $\beta_0$). In fact, by the triangle inequality and the combination of \eqref{eq:assumed initial abound} with Lemma~\ref{lemma:discrete space-time Sobolev inequality}, we have  
	$$\max_{0\le i\le k-1} \| E^i \|_{W^{1,\infty}(\Ga^0)} \leq C \beta_0 < 1$$
	or $\beta_0$ sufficiently small. \ebk 
	
	We will now use discrete maximal $L^p$-regularity to show the stability bounds \eqref{eq:maxreg stability bound for nonlinear - BDF}--\eqref{eq:maxreg stability bound for nonlinear - BDF - infty version} for $m\leq M$, which would imply that $M$ is not maximal for \eqref{eq:assumed bound} unless $M=[T/\tau]$ (i.e. the maximal integer not strictly bigger than $T/\tau$).
	
	(a) Under assumption \eqref{eq:assumed bound}, by the local-Lipschitz continuity of $f$, we have 
	\begin{align}\label{local-Lipschitz-f1}
	|f(\hat U^n, K^n \nb_{\Ga^0} \hat U^n) - f(\widehat{U}_*^n, K^n \nb_{\Ga^0} \widehat{U}_*^n)|
	\le C(|\widehat E^n|+|\nabla \widehat E^n|) , \qquad n \leq M.
	\end{align}
	By applying Theorem \ref{theorem:MaxLp - LagrangianBDF} to the error equation \eqref{eq:nonlin error eq} up to $N \leq M$, and using \eqref{local-Lipschitz-f1}, we obtain
	\begin{equation}
	\label{eq:pre rhs estimate}
	\begin{aligned}
	&\ \bigg\| \bigg(\frac{E^n-E^{n-1}}{\tau}\bigg)_{n=0}^N \bigg\|_{L^p(L^q(\Ga^0))} + \| (E^n)_{n=0}^N \|_{L^p(W^{2,q}(\Ga^0))}  \\
	\leq &\  
	C \| (f(\hat u^n, K^n \nb_{\Ga^0} \hat u^n) - f(\widehat{U}_*^n, K^n \nb_{\Ga^0} \widehat{U}_*^n))_{n=k}^N \|_{L^p(L^q(\Ga^0))} 
	+ C \| (D^n)_{n=k}^N \|_{L^p(L^q(\Ga^0))} \\
	&\
	+C(\tau^{-1}\|(E^i)_{i=0}^{k-1}\|_{L^{p}(L^q(\Ga^0))} + \|(E^i)_{i=0}^{k-1}\|_{L^{p}(W^{2,q}(\Ga^0))}) \\
	\leq &\ C\| (\widehat E^n)_{n=k}^N \|_{L^p(W^{1,q}(\Ga^0))} 
	+ C \| (D^n)_{n=k}^N \|_{L^p(L^q(\Ga^0))} \\
	&\
	+C(\tau^{-1}\|(E^i)_{i=0}^{k-1}\|_{L^{p}(L^q(\Ga^0))} + \|(E^i)_{i=0}^{k-1}\|_{L^{p}(W^{2,q}(\Ga^0))})
	\\
	\leq &\ C\| (E^n)_{n=0}^N \|_{L^p(W^{1,q}(\Ga^0))} 
	+ C \| (D^n)_{n=k}^N \|_{L^p(L^q(\Ga^0))} \\
	&\ 
	+C(\tau^{-1}\|(E^i)_{i=0}^{k-1}\|_{L^{p}(L^q(\Ga^0))} + \|(E^i)_{i=0}^{k-1}\|_{L^{p}(W^{2,q}(\Ga^0))}) \\
	\leq &\ \varepsilon\| (E^n)_{n=0}^N \|_{L^p(W^{2,q}(\Ga^0))} +C\varepsilon^{-1}\| (E^n)_{n=0}^N \|_{L^p(L^q(\Ga^0))} 
	+ C \| (D^n)_{n=k}^N \|_{L^p(L^q(\Ga^0))} \\
	&\ 
	+C(\tau^{-1}\|(E^i)_{i=0}^{k-1}\|_{L^{p}(L^q(\Ga^0))} + \|(E^i)_{i=0}^{k-1}\|_{L^{p}(W^{2,q}(\Ga^0))})
	,
	\end{aligned}
	\end{equation}
	where we have used the Sobolev interpolation inequality (cf. \cite[Theorem 4.12]{Adams})
	$$
	\|E^n\|_{W^{1,q}(\Ga^0)}
	\le
	\varepsilon\|E^n\|_{W^{2,q}(\Ga^0)}
	+C\varepsilon^{-1}\|E^n\|_{L^q(\Ga^0)},
	$$
	with an arbitrary parameter $\varepsilon>0$. 
	By choosing $\varepsilon$ sufficiently small, the leading term $\varepsilon\| (E^n)_{n=0}^N \|_{L^p(W^{2,q}(\Ga^0))}$ can be absorbed to the left-hand side. Then, similarly as \eqref{eq:maxreg bound - pre power p}, the low-order term $C\|(E^n)_{n=0}^N\|_{L^{p}(L^q(\Ga^0))}$ on the right-hand side above can be removed by using Gronwall's inequality. 
	This proves the first estimate \eqref{eq:maxreg stability bound for nonlinear - BDF} for $m \leq M$.
	
	The $W^{1,\infty}(\Ga^0)$ error estimate can be obtained by applying Lemma \ref{lemma:discrete space-time Sobolev inequality}:
	\begin{equation}
	\label{eq:embedding}
	\max_{0\le n\le N} \| E^n \|_{W^{1,\infty}(\Ga^0)} \leq C\Bigg( \bigg\|\bigg(\frac{E^n - E^{n-1}}{\tau}\bigg)_{n=0}^N\bigg\|_{L^p(L^q(\Ga^0))} + \|(E^n)_{n=0}^N\|_{L^p(W^{2,q}(\Ga^0))} \Bigg) .
	\end{equation}
	Combining \eqref{eq:maxreg stability bound for nonlinear - BDF} and \eqref{eq:embedding} yileds \eqref{eq:maxreg stability bound for nonlinear - BDF - infty version} for $N \leq M$.
	
	(b) A triangle inequality and the above estimate imply 
	\begin{align*}
	\| U^M \|_{W^{1,\infty}(\Ga^0)} 
	\leq &\ \|U_*^M\|_{W^{1,\infty}(\Ga^0)}  + \| E^M \|_{W^{1,\infty}(\Ga^0)}  \\
	\leq &\ \|U_*^M\|_{W^{1,\infty}(\Ga^0)} + C \beta_0 
	\quad\mbox{(here \eqref{eq:assumed initial abound}--\eqref{eq:assumed defect bound} are used)} . 
	\end{align*}
	For a sufficiently small $\beta_0>0$ (independent of $M$ and $\tau$) this implies that the inequality in \eqref{eq:assumed bound} holds for $n=M+1$ as well. Thus $M$ cannot be maximal unless $M=[T/\tau]$. 
	Therefore, \eqref{eq:assumed bound} holds for $M=[T/\tau]$ and the estimates \eqref{eq:maxreg stability bound for nonlinear - BDF}--\eqref{eq:maxreg stability bound for nonlinear - BDF - infty version} was proved for all $N \leq T/\tau$.
\end{proof}

%
%

\subsection{Consistency}
\label{section: time discerete consistency}

In this section we show that the defect (or consistency error) $D^n$ obtained by inserting the exact solution into the BDF method \eqref{eq:perturbed nonlin BDF}, are bounded in the required norms by $C \tau^k$ for the linearly implicit BDF methods of order $k$. Hence, the assumed defect bounds in Proposition~\ref{proposition: stability for nonlinear - time discrete} are indeed satisfied.

\begin{lemma}
	\label{lemma: consistency - time discrete}
	Let the solution $u$, and hence also its pull-back $U_*$, be sufficiently smooth. Then the defect $D^n$ of the linearly implicit $k$-step BDF method, given by \eqref{eq:perturbed nonlin BDF}, is bounded by
	\begin{equation}
	\label{eq:consistency - time discrete}
	\max_{k \leq n  \leq T/\tau } \|D^n\|_{L^q(\Ga^0)} \leq C \tau^k ,
	\end{equation}
	where the constant $C>0$ only depends on $\GT$, the exact solution $u$ {\rm(}respectively $U)$ and $T$.
\end{lemma}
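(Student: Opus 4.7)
The plan is to decompose the defect into a BDF time-differentiation error and an extrapolation error, then bound each by a standard Taylor expansion. Evaluating the pulled-back PDE \eqref{PDE:Gamma - nonlinear} at $t=t_n$ yields
\begin{equation*}
\partial_t(aU_*)(\cdot,t_n) - \nabla_{\Ga^0}\cdot(B^n \nabla_{\Ga^0} U_*^n) = a^n f(U_*^n, K^n \nabla_{\Ga^0} U_*^n),
\end{equation*}
and subtracting this from \eqref{eq:perturbed nonlin BDF} (the $\nabla\cdot(B\nabla)$ terms cancel) identifies
\begin{equation*}
D^n = \underbrace{\Bigl[\tfrac{1}{\tau}\textstyle\sum_{j=0}^k \delta_j a^{n-j} U_*^{n-j} - \partial_t(aU_*)(\cdot,t_n)\Bigr]}_{=: D^n_{\mathrm{BDF}}} - \underbrace{a^n\bigl[f(\widehat U_*^n,K^n\nabla_{\Ga^0}\widehat U_*^n) - f(U_*^n,K^n\nabla_{\Ga^0} U_*^n)\bigr]}_{=: D^n_{\mathrm{extr}}}.
\end{equation*}

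For $D^n_{\mathrm{BDF}}$, I would view the BDF operator pointwise in $y\in\Ga^0$ as acting on the scalar function $t\mapsto (aU_*)(y,t)$. Since the $k$-step BDF has classical order $k$, the Peano kernel (integral remainder) representation gives the pointwise bound
\begin{equation*}
\Bigl|\tfrac{1}{\tau}\textstyle\sum_{j=0}^k \delta_j (aU_*)(y,t_{n-j}) - \partial_t(aU_*)(y,t_n)\Bigr| \leq C\tau^k \int_{t_{n-k}}^{t_n}|\partial_t^{k+1}(aU_*)(y,s)|\,\dd s.
\end{equation*}
Taking the $L^q(\Ga^0)$ norm and using Minkowski's inequality, together with the smoothness of $a$ in $t$ and the hypothesis $U_*\in C^{k+1}([0,T];L^q(\Ga^0))$ (obtained from the pull-back of $u\in C^{k+1}_t L^q(\Ga(t))$), yields $\|D^n_{\mathrm{BDF}}\|_{L^q(\Ga^0)}\leq C\tau^k$. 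For $D^n_{\mathrm{extr}}$, the polynomial $\gamma(\zeta)=\frac{1}{\zeta}(1-(1-\zeta)^k)$ is constructed so that $\widehat U_*^n$ is the order-$k$ Lagrange extrapolation of $U_*$ at $t_n$ from the nodes $t_{n-1},\ldots,t_{n-k}$; a Newton/Peano remainder gives
\begin{equation*}
\|\widehat U_*^n - U_*^n\|_{W^{1,q}(\Ga^0)} \leq C\tau^k.
\end{equation*}
Because $U_*$ and its extrapolants are uniformly bounded in $W^{1,\infty}(\Ga^0)$ (via \eqref{eq:norm equivalence for spatial derivatives} and $u\in C_tW^{2,q}$ with $2/p+m/q<1$), local Lipschitz continuity of $f$, together with uniform smoothness of $a^n$ and $K^n$, gives $\|D^n_{\mathrm{extr}}\|_{L^q(\Ga^0)}\leq C\|\widehat U_*^n-U_*^n\|_{W^{1,q}(\Ga^0)}\leq C\tau^k$.

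The main obstacle is the regularity bookkeeping in the extrapolation step: the hypothesis supplies $U_*\in C_t W^{2,q}\cap C^{k+1}_t L^q$, but the remainder formula for order-$k$ extrapolation naively requires $\partial_t^k U_*$ with values in $W^{1,q}$. The cleanest way to sidestep this is to write
\begin{equation*}
\widehat U_*^n - U_*^n = \tau^k\, r[\partial_t U_*](t_n),
\end{equation*}
where $r$ is a finite linear combination of iterated time integrals of $\partial_t U_*$ over subintervals of $[t_{n-k},t_n]$, and then estimate $\|\partial_t U_*(\cdot,s)\|_{W^{1,q}(\Ga^0)}$ by parabolic interpolation between $U_*\in C([0,T];W^{2,q}(\Ga^0))$ and $\partial_t U_*\in C([0,T];L^q(\Ga^0))$. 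Combining the two defect bounds and taking the maximum over $k\leq n\leq T/\tau$ gives \eqref{eq:consistency - time discrete}.
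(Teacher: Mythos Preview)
Your proposal is correct and follows essentially the same route as the paper: the paper also rewrites $D^n$ via the PDE \eqref{PDE:Gamma - nonlinear} as the sum of the BDF quotient error for $t\mapsto a(\cdot,t)U_*(\cdot,t)$ and the nonlinear extrapolation error $a^n[f(U_*^n,\cdot)-f(\widehat U_*^n,\cdot)]$, and then invokes Taylor expansion with bounded Peano kernels for both pieces. Your ``main obstacle'' paragraph about regularity bookkeeping is unnecessary here, since the lemma hypothesis is simply that $U_*$ is \emph{sufficiently smooth}; the paper does not attempt to work under the minimal regularity of Theorem~\ref{theorem: convergence of BDF time discr} at this stage, so you may assume $U_*\in C^k([0,T];W^{1,q}(\Ga^0))$ directly and drop the interpolation workaround.
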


\begin{proof}
	Using the differential equation \eqref{PDE:Gamma - nonlinear}, recalling that its solution $U(\cdot,t_n) = U_*^n$, we can rewrite the defect $D^n$ from \eqref{eq:perturbed nonlin BDF} in the form
	\begin{align*}
	D^n =&\ \frac{1}{\tau} \sum_{j=0}^k \delta_j a(t_{n-j}) U(\cdot,t_{n-j}) - \frac{\partial }{\partial t} \big( a(\cdot,t_n) U(\cdot,t_n)\big)\\
	&\ + a(\cdot,t_n) \big[ f\big(U(\cdot,t_n),K(\cdot,t_n) \nb_{\Ga^0} U(\cdot,t_n)\big) -  f(\widehat{U}(\cdot,t_n), K(\cdot,t_n) \nb_{\Ga^0} \widehat{U}(\cdot,t_n)) \big].
	\end{align*}
	We then estimate the two pairs separately, using the standard techniques for consistency errors in BDF methods. Using the definition of the order of BDF methods (through the generating functions $\delta(\zeta)$ and $\gamma(\zeta)$ from \eqref{eq:BDF generating functions} and \eqref{eq:extrapolation - hat u}), together with Taylor expansion and bounded Peano kernels, in the same way as in \cite{AkrivisLiLubich2017,AkrivisLubich_quasilinBDF,LubichMansourVenkataraman_bdsurf}, we can obtain the pointwise bound
	\begin{equation*}
	\|D^n\|_{L^q(\Ga^0)} \leq C \tau^k , \quadfor n = k,\dotsc, [T/\tau] .
	\end{equation*}
\end{proof}

\subsection{Convergence}

The stability and consistency results proved in the last two subsections imply the following error estimates.
\begin{proof}[Proof of Theorem \ref{theorem: convergence of BDF time discr}]
	The error functions $e^n=u^n-u(\cdot,t_n)$ and $E^n=U^n-U(\cdot,t_n)$ are related through 
	$$
	e^n(X(\cdot,t_n))=E^n
	\quad\mbox{and}\quad 
	(\mathcal{F}_X(t_n,t_{n-1}) e^{n-1})(X(\cdot,t_n))=E^{n-1} .
	$$ 
	Lemma~\ref{lemma: consistency - time discrete} for the defects and the $O(\tau^k)$ assumption on the error of the starting values \eqref{initial-error-BDF} imply that the assumed bounds \eqref{eq:assumed initial abound}--\eqref{eq:assumed defect bound} of Proposition~\ref{proposition: stability for nonlinear - time discrete} are satisfied for sufficiently small step size $\tau$. 
	Then the stability estimates \eqref{eq:maxreg stability bound for nonlinear - BDF}--\eqref{eq:maxreg stability bound for nonlinear - BDF - infty version} imply
	\begin{align*}
	&\ \bigg( \tau \sum_{n=0}^N \Big( \bigg\|\frac{e^{n}-\mathcal{F}_X(t_n,t_{n-1}) e^{n-1}}{\tau}\bigg\|_{L^q(\Ga(t_n))}^p + \|e^{n}\|_{W^{2,q}(\Ga(t_n))}^p \Big) \bigg)^{\frac1p} \\
	\leq &\ C\bigg\| \bigg(\frac{E^{n}-E^{n-1}}{\tau}\bigg)_{n=k}^N\bigg\|_{L^p(L^q(\Ga^0))} + C\|(E^{n})_{n=k}^N\|_{L^p(W^{2,q}(\Ga^0))} \qquad \mbox{(pulled back to $\Ga^0$)} \\
	\leq &\ C \| (D^n)_{n=k}^N \|_{L^p(L^q(\Ga^0))} + C(\tau^{-1}\|(E^i)_{i=0}^{k-1}\|_{L^{p}(L^q(\Ga^0))} + \|(E^i)_{i=0}^{k-1}\|_{L^{p}(W^{2,q}(\Ga^0))}) \\
	\leq &\ C \tau^k . 
	\end{align*}
	and
	\begin{align*}
	\max_{0\le n\le N} \|e^{n}\|_{W^{1,\infty}(\Ga(t_n))} \leq C \tau^k . 
	\end{align*}
\end{proof}

\begin{remark} \upshape 
	The error estimate \eqref{Error-Time-discr} implies the boundedness of $\|u^{n}\|_{W^{1,\infty}(\Ga(t_n))}$, uniform with respect to the step size $\tau$. 
	After being pulled back to the initial surface, the estimate \eqref{Error-Time-discr-Lp} yields 
	\begin{align}\label{Lp-W2q-error}
	\|(U^{n}-U(\cdot,t_n))_{n=1}^N\|_{L^p(W^{2,q}(\Ga^0))} \le C\tau^k.
	\end{align}
	and 
	\begin{equation}
	\label{Uniform-RegU}
	\begin{aligned}
	&\ \bigg\|\bigg(\frac{U^{n}-U^{n-1}}{\tau}\bigg)_{n=1}^N\bigg\|_{L^p(L^q(\Ga^0))}
	+ \|(U^{n})_{n=1}^N\|_{L^p(W^{2,q}(\Ga^0))} \\
	\leq &\ C \bigg\| \bigg(\frac{E^{n}-E^{n-1}}{\tau}\bigg)_{n=1}^N\bigg\|_{L^p(L^q(\Ga^0))}
	+C\|(E^{n})_{n=1}^N\|_{L^p(W^{2,q}(\Ga^0))} \\
	&\ +\bigg\|\bigg(\frac{U(\cdot,t_{n})-U(\cdot,t_{n-1})}{\tau}\bigg)_{n=1}^N\bigg\|_{L^p(L^q(\Ga^0))}
	+ \|U(\cdot,t_{n})_{n=1}^N\|_{L^p(W^{2,q}(\Ga^0))} \\
	\leq &\ C \tau^k 		+\bigg\|\bigg(\frac{U(\cdot,t_{n})-U(\cdot,t_{n-1})}{\tau}\bigg)_{n=1}^N\bigg\|_{L^p(L^q(\Ga^0))}
	+ \|U(\cdot,t_{n})_{n=1}^N\|_{L^p(W^{2,q}(\Ga^0))} \\
	\leq &\ C .
	\end{aligned}
	\end{equation}
	Correspondingly, we have 
	\begin{align} \label{Time-discr-Reg}
	\bigg( 
	\tau \sum_{n=1}^N 
	\bigg\|\frac{u^{n}-\mathcal{F}_X(t_n,t_{n-1})u^{n-1}}{\tau}\bigg\|_{L^q(\Ga(t_n))}^p +\tau \sum_{n=1}^N  \|u^{n}\|_{W^{2,q}(\Ga(t_n))}^p 
	\bigg)^{\frac1p} 
	\le &\ C.
	\end{align}
	The estimate \eqref{Lp-W2q-error} further implies 
	\begin{align}
	\bigg\|\bigg(\frac{1}{\tau}\sum_{j=0}^k\delta_j(U^{n-j}-U(\cdot,t_{n-j}))\bigg)_{n=1}^N\bigg\|_{L^p(W^{2,q}(\Ga^0))} \le C\tau^{k-1}\le C.
	\end{align}
	By using the triangle inequality, we have 
	\begin{align}\label{discr-LpH2U}
	&\ \bigg\|\bigg(\frac{1}{\tau}\sum_{j=0}^k\delta_j U^{n-j}\bigg)_{n=k}^N\bigg\|_{L^p(W^{2,q}(\Ga^0))} \nonumber \\
	\leq &\ \bigg\|\bigg(\frac{1}{\tau}\sum_{j=0}^k\delta_j (U^{n-j}-U(\cdot,t_{n-j}))\bigg)_{n=k}^N\bigg\|_{L^p(W^{2,q}(\Ga^0))}
	+\bigg\|\bigg(\frac{1}{\tau}\sum_{j=0}^k\delta_j U(\cdot,t_{n-j})\bigg)_{n=k}^N\bigg\|_{L^p(W^{2,q}(\Ga^0))} \nonumber\\
	\leq &\ C +\|\partial_tU\|_{L^p(0,T;W^{2,q}(\Ga^0))} \leq C .
	\end{align}
\end{remark}

\bbk 
\section{Numerical experiments}
\label{section:numerics}

To support the theoretical results in Theorem \ref{theorem: convergence of BDF time discr}, we present some numerical results with linear evolving surface finite elements and BDF methods of order $k$ (specified later). 
Quadratures of sufficiently high order were used to compute the finite element vectors and matrices so that the resulting quadrature error does not feature in the discussion of the accuracies of the schemes. 
The parametrisation of the finite elements was inspired by \cite{BCH2006}. 
The initial meshes were all generated using DistMesh by \cite{distmesh}, without taking advantage of any symmetry of the surface.

%


We consider the parabolic problem \eqref{eq:PDE} with the following nonlinear source term: 
\begin{equation*}
	f(u,\nbg u) = |\nbg u|^2 u + \rho
\end{equation*}
i.e. which corresponds to the harmonic map heat flow (cf.~\cite{ElliottFritz2016}) with an additional remainder $\rho$, on the evolving surface given by
\begin{equation*}
	\Ga\t = \big\{ x \in \R^3 \ \mid \ a(t)^{-1} x_1^2 + x_2^2 + x_3^2 - 1 = 0 \big\} ,
\end{equation*}
where $a(t)=1+\frac{1}{4}\sin(2\pi t)$, see \cite{DziukElliott_ESFEM}, and also \cite{DziukLubichMansour_rksurf,KovacsPower_max,KovacsPower_quasilinear,highorderESFEM}.
The initial value and the inhomogeneity $\rho$ are chosen corresponding to the exact solution $u(x,t) = e^{-t} x_1 x_2$.
In the experiments we use a sequence of meshes (with roughly doubling degrees of freedom, see plots) and for a sequence of time steps $\tau_{k+1} = \tau_k / 2$ with $\tau_0 = 0.2$.

In order to illustrate the convergence results \eqref{Error-Time-discr} of Theorem~\ref{theorem: convergence of BDF time discr} we have computed the $L^\infty(W^{1,\infty})$ norm of the errors between the fully discrete numerical solution and (the nodal interpolation of the) exact solution. The $L^\infty(W^{1,\infty})$ norm is computed by evaluating the (elementwise linear) fully discrete numerical solution and its gradient on each element. Note that for linear evolving surface finite element solutions the norm in \eqref{Error-Time-discr-Lp} does not make sense globally on $\Ga_h(t_n)$.
The initial values are chosen to be the nodal interpolation of the exact initial values $u(\cdot,t_i)$, for $i=0,\dotsc,k-1$.

In Figure~\ref{fig:maxreg BDF}, left- and right-hand sides, we report the $L^\infty(W^{1,\infty})$ norm of the errors of the second and fourth order BDF method, respectively, over the time interval $[0,T]$ with $T=1$.
The log-log plots of Figure~\ref{fig:maxreg BDF} report on the $L^\infty(W^{1,\infty})$ norm of the errors against against the time step size $\tau$.
In both plots, the lines marked with different symbols and different colours correspond to different mesh widths, while on each line a marker corresponds to a time step sizes $\tau_k$.
We can observe two regions in the figures: a region where the temporal discretisation error dominates, matching the $O(\tau^k)$ order of the BDF method ($k=2$ and $4$), see \eqref{Error-Time-discr} of Theorem~\ref{theorem: convergence of BDF time discr}, (see the reference lines), and a region, with small time step size, where the spatial discretisation error dominates (the error curves flatten out).

\begin{figure}[htbp]
	\centering
	\includegraphics[width=0.45\textwidth]{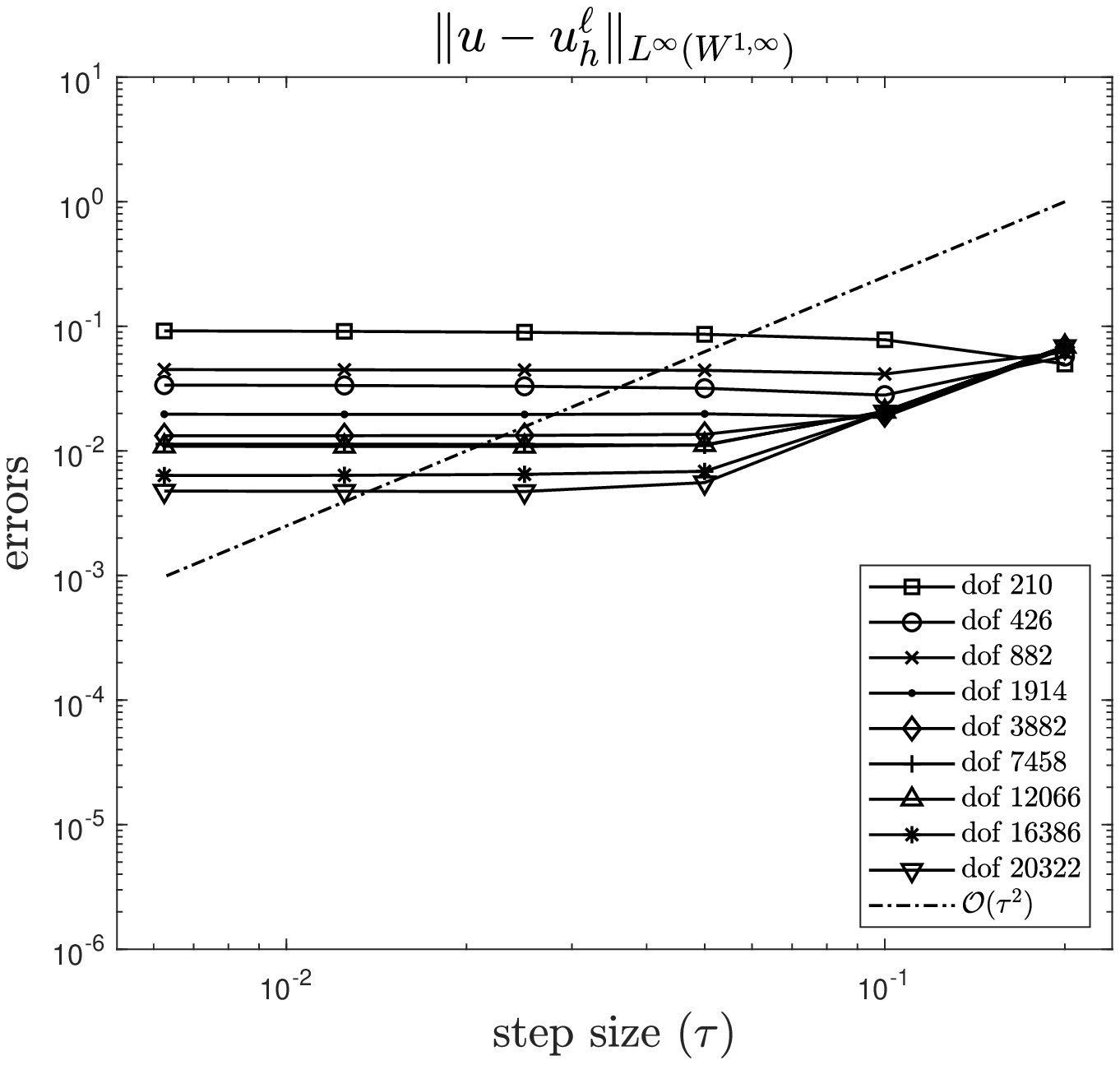}
	\includegraphics[width=0.45\textwidth]{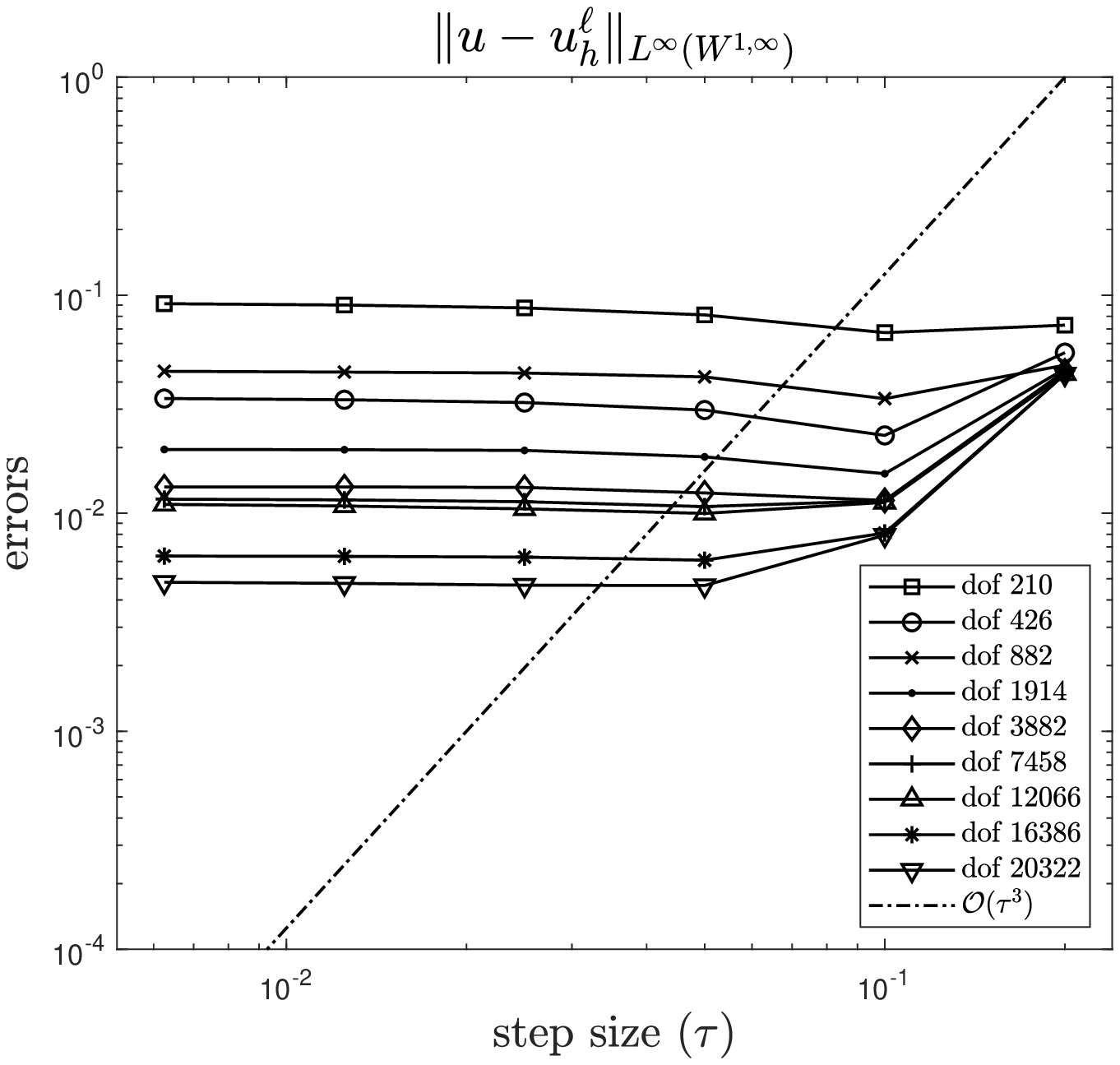}
	\caption{$L^\infty(W^{1,\infty})$ norm of the errors for a  BDF2 and BDF3 / linear ESFEM discretisations for the evolving surface PDE.}\label{fig:maxreg BDF}
\end{figure}

\ebk 

\section{Conclusions}

We have established the basic maximal $L^p$-regularity results of BDF methods for evolving surface parabolic PDEs. 
By using these results, we have proved optimal-order convergence of BDF methods for nonlinear evolving surface PDEs with a general nonlinear term which is not necessarily globally Lipschitz continuous. 
The maximal $L^p$-regularity results of BDF methods established in this paper and the techniques to handle locally Lipschitz continuous nonlinearities in evolving surface PDEs may be used to remove the grid ratio conditions $\tau=O(h^\kappa)$ for mean curvature flow and Willmore flow in \cite{MCF,Willmore} through analyzing the temporal semi-discretization and full discretization separately as in \cite{LiSun2013}.

\section*{Funding}

In the initial phase of this work, the authors were partially supported by a grant from the Germany/Hong Kong Joint Research Scheme sponsored by Research Grants Council of Hong Kong and the German Academic Exchange Service (G-PolyU502/16). 

The work of Bal\'azs Kov\'acs is supported by the Heisenberg Programme of the Deutsche Forschungsgemeinschaft (DFG, German Research Foundation) -- Project-ID 446431602.
The work of Buyang Li is supported by Research Grants Council of Hong Kong (GRF project 15300920).

\section*{Acknowledgement}
We thank the anonymous referees for the valuable comments and suggestions. 


\appendix

\section*{Appendix: Expressions of $a(X,t)$, $B(X,t)$ and $K(x,t)$ in a local chart}
\label{sec:Appendix}

For each point \(x\in \surface(0)\) there exists an open ball $D\subset\R^m$ and a smooth map \(\vphi\colon D \to \R^{m+1}\) (called local parametrization) such that \(\vphi\) is bijective and the $(m+1)\times m$ Jacobian matrix $\nabla \vphi$ has rank m, and \(x\in\vphi(D) \subset \surface\). 
By using the abbreviation $X_t:=X(\cdot,t)$, the composition map
$X_t\circ\vphi:D\rightarrow \Ga(t)\subset\R^{m+1}$ is a local chart of $\Ga(t)$,
with the Riemannian metric tensor 
\begin{align}\label{metric-tensor}
g_{ij}(\xi,t)=
\frac{\partial (X_t\circ\vphi)(\xi)}{\partial \xi_i}
\cdot \frac{\partial (X_t\circ\vphi)(\xi)}{\partial \xi_j}=
\sum_{m=1}^3\frac{\partial (X_t\circ\vphi)_m(\xi)}{\partial \xi_i}
\frac{\partial (X_t\circ\vphi)_m(\xi)}{\partial \xi_j},
\quad\forall\,\xi\in D. 
\end{align}

If we define $g=\det(g_{ij})$, then
the surface area element $\d S_t$ on $\Ga(t)$ can be expressed as
$$
\d S_t((X_t\circ\vphi)(\xi))=\sqrt{g(\xi,t)}\d \xi .
$$
Hence, for two functions $u(\cdot,t)$ and $\varphi(\cdot,t)$ defined on $\Ga(t)$, 
\begin{align}
\begin{aligned}
\int_{ (X_t\circ \vphi)(D)\cap \Ga(t) } u \vphi 
&=\int_{D} (u(\cdot,t) \circ X_t\circ\vphi) (\xi) \, (\varphi(\cdot,t) \circ X_t\circ\vphi) (\xi)
\sqrt{g(y,t)}\d y \\
&=\int_{D} (u(\cdot,t) \circ X_t)(\vphi(\xi)) \,  (\varphi(\cdot,t) \circ X_t)(\vphi(\xi))
\sqrt{\frac{g(\xi,t)}{g(\xi,0)}} \sqrt{g(\xi,0)}\d \xi \\
&=\int_{\vphi(D)\cap \Ga(0)}  u(X_t(y),t) \varphi(X_t(y),t)
\sqrt{\frac{g(\vphi^{-1}(y),t)}{g(\vphi^{-1}(y),0)}}   \, ,
\end{aligned}
\end{align}
where we have used the change of variable $y=\vphi(\xi)$ in the last equality. 
By comparing \eqref{eq:ES-PDE-weak-form} and \eqref{eq:ES-PDE-weak-form-2}, we obtain
\begin{align}\label{def-ayt}
a(y,t) =\sqrt{\frac{g(\vphi^{-1}(y),t)}{g(\vphi^{-1}(y),0)}}
\end{align}

It is easy to check that the function $a(y,t)$ given by \eqref{def-ayt} is well-defined, i.e.,
independent of the choice of the local parametrization \(\vphi\). 
In fact, if we choose another  local parametrization \(\widetilde\vphi:\widetilde D\rightarrow\R^{m+1}\) then 
\begin{align*}
\widetilde g(\widetilde\vphi^{-1}(y),t)
&=
\det\bigg(\sum_{r=1}^{m+1}\frac{\partial (X_t\circ\widetilde\vphi)_r}{\partial \widetilde\xi_i}
\frac{\partial (X_t\circ\widetilde\vphi)_r}{\partial \widetilde\xi_j}\bigg) \\
&=
\det\bigg[\sum_{r=1}^{m+1}\bigg(\sum_{k=1}^2\frac{\partial (X_t\circ \vphi)_r}{\partial \xi_k}
\frac{\partial \xi_k}{\partial \widetilde\xi_i}\bigg)
\bigg(\sum_{l=1}^2\frac{\partial (X_t\circ \vphi)_r}{\partial \xi_l}
\frac{\partial \xi_l}{\partial \widetilde\xi_j}\bigg)  \bigg]\\
&=\det\bigg[\sum_{l=1}^m\sum_{k=1}^m\frac{\partial \xi_k}{\partial \widetilde\xi_i}
\bigg(\sum_{m=1}^3 \frac{\partial (X_t\circ \vphi)_m}{\partial \xi_k}
\frac{\partial (X_t\circ \vphi)_m}{\partial \xi_l}\bigg)
\frac{\partial \xi_l}{\partial \widetilde\xi_j} \bigg]\\
&=\det\bigg(\frac{\partial \xi_k}{\partial \widetilde\xi_i}\bigg)
\det\bigg(\sum_{r=1}^{m+1} \frac{\partial (X_t\circ \vphi)_r}{\partial \xi_k}
\frac{\partial (X_t\circ \vphi)_r}{\partial \xi_l}\bigg)
\det\bigg( \frac{\partial \xi_l}{\partial \widetilde\xi_j} \bigg)\\
&=\det\bigg(\frac{\partial \xi_k}{\partial \widetilde\xi_i}\bigg)^2
g(\vphi^{-1}(y),t)
\end{align*}
and similarly,
\begin{align*}
\widetilde g(\widetilde\vphi^{-1}(y),0)
&=\det\bigg(\frac{\partial \xi_k}{\partial \widetilde\xi_i}\bigg)^2
g(\vphi^{-1}(y),0),
\end{align*}
which imply 
\begin{align*}
\widetilde a(\widetilde y,t)
=
\sqrt{\frac{\widetilde g(\widetilde \vphi^{-1}(y),t)}{\widetilde g(\widetilde \vphi^{-1}(y),0)}}
=\sqrt{\frac{\det\bigg(\dfrac{\partial \xi_k}{\partial \widetilde\xi_i}\bigg)^2g(\vphi^{-1}(y),t)}{\det\bigg(\dfrac{\partial \xi_k}{\partial \widetilde\xi_i}\bigg)^2g(\vphi^{-1}(y),0)}}
=\sqrt{\frac{g(\vphi^{-1}(y),t)}{g(\vphi^{-1}(y),0)}}
=a(y,t).
\end{align*}

Similarly, it is well known that the tangential gradient $\nabla_{\Ga(t)}u$ can be expressed in the local chart by \cite[equation (3.1.17)]{JostGeometry}
\begin{align}\label{nabla-u-local}
&(\nabla_{\Ga(t)}u)\circ  (X_t\circ\vphi )
=\sum_{i,j=1}^m g^{ij}(\cdot,t)\frac{\partial (u\circ X_t\circ\vphi) }{\partial \xi_j}  \,
\frac{\partial (X_t\circ\vphi)}{\partial \xi_i} ,\\
&(\nabla_{\Ga(0)}(u\circ X_t))\circ  \vphi
=\sum_{i,j=1}^m g^{ij}(\cdot,0)\frac{\partial (u\circ X_t\circ\vphi) }{\partial \xi_j}  \,
\frac{\partial  \vphi }{\partial \xi_i} .
\label{nabla-u-local22}
\end{align}
where $g^{ij}$ is the inverse matrix of $g_{ij}$ defined in \eqref{metric-tensor}. 
As a result, we have 
\begin{align}
&[(\nabla_{\Ga(t)}u)\circ  (X_t\circ\vphi) ]\cdot
\frac{\partial (X_t\circ\vphi)}{\partial \xi_k}
=\frac{\partial (u\circ X_t\circ\vphi) }{\partial \xi_k} , \\
&[(\nabla_{\Ga(0)}(u\circ X_t))\circ  \vphi ] \cdot
\frac{\partial   \vphi }{\partial \xi_k}
= \frac{\partial (u\circ X_t\circ\vphi) }{\partial \xi_k}
\end{align}
By using the identities above, we have
\begin{align}
\begin{aligned}
&\int_{ (X_t\circ \vphi)(D)\cap \Ga(t) } \nabla_{\Ga(t)}u\cdot \nabla_{\Ga(t)}\vphi \,\d S_t \\
&=\int_{D} 
\sum_{i,j=1}^2g^{ij}(\cdot,t)\frac{\partial (u\circ X_t\circ\vphi)}{\partial \xi_j}  \,
\frac{\partial (X_t\circ\vphi)}{\partial \xi_i}
\sum_{k,l=1}^2g^{kl}(\cdot,t)\frac{\partial (\varphi\circ X_t\circ\vphi)}{\partial \xi_l}  \,
\frac{\partial (X_t\circ\vphi)}{\partial \xi_k}
\sqrt{g(\cdot,t)} \, \d \xi \\
&=\int_{D} 
\sum_{i,j=1}^2g^{ij}(\cdot,t)\frac{\partial (u\circ X_t\circ\vphi)}{\partial \xi_j}  \,
\sum_{k,l=1}^2g^{kl}(\cdot,t)\frac{\partial (\varphi\circ X_t\circ\vphi)}{\partial \xi_l}  \,
g_{ik}(\cdot,t) \sqrt{g(\cdot,t)} \, \d\xi \\
&=\int_{D} 
\sum_{i,j=1}^2\sqrt{g(\cdot,t)} g^{ij}(\cdot,t)
\frac{\partial (u\circ X_t\circ\vphi)}{\partial \xi_i}  \, \frac{\partial (\varphi\circ X_t\circ\vphi)}{\partial \xi_j}  \, \d \xi \,  \\
&=\int_{D} 
\sum_{i,j=1}^2\sqrt{g(\cdot,t)} g^{ij}(\cdot,t)
\bigg([(\nabla_{\Ga(0)}(u \circ X_t))\circ  \vphi ] \cdot
\frac{\partial \vphi }{\partial \xi_i}  \bigg) \,
\bigg([(\nabla_{\Ga(0)}(\varphi \circ X_t))\circ  \vphi ] \cdot
\frac{\partial   \vphi }{\partial \xi_j}  \bigg) \, \d\xi \,  \\
&=\int_{D} 
A(\xi,t)[(\nabla_{\Ga(0)}(u \circ X_t))\circ  \vphi(\xi) ]  \cdot [(\nabla_{\Ga(0)}(\varphi \circ X_t))\circ  \vphi(\xi) ]  \, \sqrt{g(\xi,0)} \d \xi \,  \\
&=\int_{\vphi(D)\cap \Ga(0)}
B(\cdot ,t) \nabla_{\Ga(0)}(u\circ X_t)     \cdot \nabla_{\Ga(0)}(\varphi\circ X_t) 
\d S_0 \, ,
\end{aligned} 
\end{align}
with
\begin{align}  \label{exp-Axit}
A(\xi,t)
&=\sum_{i,j=1}^m\sqrt{\frac{g(\xi,t)}{g(\xi,0)}} g^{ij}(\xi,t)
\frac{\partial   \vphi (\xi)}{\partial \xi_i} \otimes
\frac{\partial   \vphi(\xi) }{\partial \xi_j}
\end{align}
and $B(\cdot,t)=A(\cdot,t)\circ \vphi^{-1}$, i.e., 
\begin{align}
\label{eq:1}
B(y,t)
&=\sqrt{\frac{g(\vphi^{-1}(y),t)}{g(\vphi^{-1}(y),0)}}\sum_{i,j=1}^m g^{ij}(\vphi^{-1}(y),t)
(\partial_{\xi_i} \vphi) (\vphi^{-1}(y))  \otimes
(\partial_{\xi_j} \vphi) (\vphi^{-1}(y)) ,
\quad\forall\, y\in \Ga(0). 
\end{align}
Again, \(B(y,t)\) is well-defined and independent of the choice of the local parametrization (the proof is similar as that for $a(y,t)$ below \eqref{def-ayt}). 

By fixing a local parametrization $\vphi:D\rightarrow \R^{m+1}$ of the surface $\Ga(0)$, it is straightforward to verify the positivity and smoothness of $a(y,t)$ and $B(y,t)$ at a fixed point $y\in\Ga(0)$. The lower and upper bounds in \eqref{aXt}--\eqref{BXt} are consequences of the compactness of surface $\Ga(0)$. 

Let $K(y,t):T_y\rightarrow T_{X(y,t)}$ be a linear operator defined by 
\begin{align}\label{Def-Kyt}
K(y,t)\bigg[\frac{\partial  \vphi }{\partial \xi_i}\circ \vphi^{-1}(y)\bigg]=\frac{\partial (X_t\circ\vphi) }{\partial \xi_i} \circ \vphi^{-1}(y) ,\quad i=1,\dots,m,
\end{align}
where $\big\{\frac{\partial  \vphi }{\partial \xi_1}\circ \vphi^{-1}(y),\frac{\partial  \vphi }{\partial \xi_2}\circ \vphi^{-1}(y)\big\}$ is a basis for the tangent space $T_y$ at $y\in\Ga(0)$, and  $\big\{\frac{\partial  (X_t\circ\vphi) }{\partial \xi_1}\circ\vphi^{-1}(y),\frac{\partial  (X_t\circ\vphi) }{\partial \xi_2}\circ\vphi^{-1}(y)\big\}$ a basis for the tangent space $T_{X(y,t)}$ at $X(y,t)\in\Ga(t)$. 
Then \eqref{nabla-u-local}--\eqref{nabla-u-local22} imply that 
\begin{align}\label{Def-KK}
K(y,t) \nabla_{\Ga(0)}(u\circ X_t) = \nabla_{\Ga(t)}u .
\end{align}
Since the matrix of the linear operator $K(y,t)$ under this choice of bases is the identity matrix, it follows that the operator $K(y,t)$ is smooth and invertible.

\bibliographystyle{IMANUM-BIB}
\bibliography{maxreg_bib}

\end{document}